\theoremstyle{plain}
\newtheorem{theorem}{Theorem}[section]
\newtheorem{corollary}[theorem]{Corollary}
\newtheorem{lemma}[theorem]{Lemma}
\newtheorem{proposition}[theorem]{Proposition}
\newtheorem{definition}[theorem]{Definition}
\numberwithin{equation}{section}
\newcommand{\R}{\mathbb R}
\newcommand{\be}{\begin{equation}}
\newcommand{\ee}{\end{equation}}
\newcommand{\ep}{\eps}
\newcommand{\eps}{\varepsilon}
\newcommand{\ph}{\varphi}
\newcommand{\ov}{\overline}
\newcommand{\Om}{\Omega}
\newcommand{\p}{\partial}
\newcommand{\comment}[1]{}
\newenvironment{myindentpar}[1]%
{\begin{list}{}%
         {\setlength{\leftmargin}{#1}}%
         \item[]%
}
{\end{list}}
\title[Schauder estimates and smoothness of eigenfunctions ]{
Schauder estimates for degenerate Monge-Amp\`ere equations and smoothness of the eigenfunctions
}
\author{Nam Q. Le}
\address{Department of Mathematics, Indiana University, Bloomington, IN 47405, USA}
\email{nqle@indiana.edu}
\author{Ovidiu Savin}
\address{Department of Mathematics, Columbia University, New York, NY 10027, USA}
\email{savin@math.columbia.edu}
\thanks{ The first author was supported by NSF grant DMS-1500400. The second author was supported by NSF grant DMS-1200701.}
\begin{document}

\begin{abstract}
We obtain $C^{2,\beta}$ estimates up to the boundary for solutions to degenerate Monge-Amp\`ere equations of the type
$$
\det D^2 u = f~~\text{in}~\Omega, \quad \quad ~f\sim \text{dist}^{\alpha}(\cdot, \p\Omega)~\text{near}~\p\Omega,~\alpha>0.
$$
As a consequence we obtain global $C^\infty$ estimates up to the boundary for the eigenfunctions of the Monge-Amp\`ere operator $(\det D^2 u)^{1/n}$ on smooth, bounded, uniformly 
convex domains in $\R^n$.
\end{abstract}
\maketitle

\section{Introduction}

In this paper we develop Schauder estimates up to the boundary for degenerate Monge-Amp\`ere equations of the type
\begin{equation}
\det D^2 u = f~~\text{in}~\Omega, \quad \quad ~f\sim d_{\p\Omega}^{\alpha}~\text{near}~\p\Omega,
\label{MAxn}\end{equation}
and apply them to prove global smoothness for the eigenfunctions of the Monge-Amp\`ere operator $(\det D^2 u)^{1/n}$. Throughout the paper, $d_{\p\Omega}$ represents the 
distance to the boundary of the domain $\Omega$, and $\alpha>0$ is a positive power. 

Boundary estimates for the Monge-Amp\`ere equation in the nondegenerate case where $f\in C(\ov \Om)$ and $f>0$ were obtained starting with 
the works of Ivo\u{c}kina \cite{I}, Krylov \cite{K}, Caffarelli-Nirenberg-Spruck \cite{CNS} (see also \cite{W}). Also in the nondegenerate case, global $C^{2,\alpha}$ estimates under sharp conditions
on the right hand side and boundary data were obtained recently by Trudinger-Wang \cite{TW1} and the second author \cite{S1}.

In \cite{S2}, the second author established a boundary localization theorem and then applied it to obtain
$C^2$ estimates at the boundary for solutions of (\ref{MAxn})
under natural conditions on the boundary data and the right hand side. In the present work we investigate further regularity of such solutions. We use the boundary $C^2$ estimates in \cite{S2} and 
perturbation arguments \cite{C, CC} to establish basic boundary H\"older second derivative estimates for solutions to (\ref{MAxn}) when the boundary $\p\Omega$, boundary data of $u$ on $\p\Omega$ and $f$ are more regular; see 
Theorems \ref{T01} and \ref{T02}. Our results can be viewed as the degenerate counterparts of the works cited above. They easily give global $C^{2,\beta}$ estimates up to the boundary for the eigenfunctions of the Monge-Amp\`ere operator $(\det D^2 u)^{1/n}$; 
see Theorem \ref{T03}. This is the key step in settling the open problem on global smoothness of the Monge-Amp\`ere eigenfunctions in all dimensions; see Theorem \ref{T035}.

Before stating our main results we recall the notion for a function to be $C^{2,\beta}$ at a point (see \cite{CC}). We say that $u$ is $C^{2, \beta}$ at $x_0$ if there exists a quadratic polynomial $Q_{x_0}$ such that, in the domain of definition of $u$, $$u(x)=Q_{x_0}(x) + O(|x-x_0|^{2+\beta}).$$
 Throughout this paper we refer to a linear map $A$ of the form
 $$Ax=x+\tau x_n, \quad \quad \mbox{with} \quad \tau \cdot e_n=0,$$
as a {\it sliding along} $x_n=0$. Notice that the map $A$ is the identity map when it is restricted to $x_n=0$ and it becomes a translation of vector $s \tau$ when it is restricted to $x_n=s$.

Let $\Omega$ be a bounded convex domain in the upper-half space such that $\p \Om$ is $C^{1,1}$ at the origin, that is $0 \in \p \Om$ and
\begin{equation}\label{01}
\Om \subset \{x_n>0\},\quad \mbox{ and $\Om$ has an interior tangent ball at the origin.}
\end{equation}
We are interested in the behavior near the origin of a convex solution $u\in C(\ov \Om)$ to the equation
\begin{equation}\label{02}
\det D^2 u=g(x) \, d_{\p \Om}^\alpha, \quad \quad \quad \alpha>0,
\end{equation}
where $g$ is a nonnegative function that is continuous at the origin and $g(0)>0$.

We state below our main Schauder estimates for solutions to \eqref{02}. In Section \ref{s2} we will give more precise quantitative versions of these theorems.

Our first result is the following pointwise $C^{2,\beta}$ estimates at the boundary. 
\begin{theorem}\label{T01}
Let $\Om$, $u$ satisfy \eqref{01}, \eqref{02} above and let $\beta \in (0,\frac{2}{2+\alpha})$. Assume that
$$ u(0)=0, \quad \nabla u(0)=0, \quad u=\ph \quad \mbox{on $\p \Om$},$$
and the boundary data $\ph$ is $C^{2,\beta}$ at $0$, and it separates quadratically away from $0$.

If $g \in C^\gamma$ at the origin where $\gamma=\beta \, \, \frac{2+\alpha}{2}$ then $u$ is $C^{2,\beta}$ at $0$. Precisely, there exist a sliding $A$ along $x_n=0$ and a constant $a>0$ such that
$$u(Ax)=Q_0(x')+ a \, \,  x_n^{2+\alpha} + O(|x'|^2+x_n^{2+\alpha})^{1+\frac \beta 2},$$
where $Q_0$ represents the quadratic part of the boundary data $\ph$ at the origin.
\end{theorem}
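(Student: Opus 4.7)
The strategy is a boundary perturbation/iteration argument in the spirit of Caffarelli--Cabr\'e \cite{CC}, adapted to the degenerate geometry of \eqref{02} by working in \emph{anisotropic sections} that respect the natural scaling $x'\mapsto r x'$, $x_n\mapsto r^{2/(2+\alpha)} x_n$, $u\mapsto r^{-2}u$, under which the leading pieces $|x'|^2$ and $x_n^{2+\alpha}$ of the claimed error term both have size $\sim r^2$. The target model polynomial is
\[
P_a(x)=Q_0(x')+a\,x_n^{2+\alpha},
\]
with $a>0$ uniquely determined so that $\det D^2 P_a$ matches $g(0)\, x_n^\alpha$ at the origin, using $d_{\partial\Omega}(x)=x_n+O(x_n^2+|x'|^2)$ from the $C^{1,1}$ hypothesis \eqref{01}. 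The base of the induction is supplied by the boundary $C^2$ estimates of \cite{S2}: for a universal scale $r_0$ and an initial sliding $A_0$, $u\circ A_0$ is quantitatively close to $P_{a_0}$ on the section of height $r_0$.

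The main step is the inductive improvement. Assume that at scale $r=r_0\rho^k$ one has a sliding $A_k=\mathrm{id}+\tau_k\otimes e_n$ and a coefficient $a_k$ with
\[
\bigl|u(A_k x)-P_{a_k}(x)\bigr|\le r^{2+\beta}\quad\mbox{in}\quad \mathcal{S}_r:=\{|x'|\le r,\;x_n\le r^{2/(2+\alpha)}\}\cap\Omega.
\]
The claim is that there exist $\tau_{k+1}$, $a_{k+1}$, with $|\tau_{k+1}-\tau_k|,|a_{k+1}-a_k|\le Cr^\beta$, so that the analogous estimate holds on $\mathcal{S}_{\rho r}$, for a fixed $\rho=\rho(n,\alpha,\beta)\in(0,1)$. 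I would prove this by compactness and contradiction: rescale $u$ on $\mathcal{S}_r$ by the anisotropic map, subtract $P_{a_k}$, divide by $r^{2+\beta}$, and extract a limit along a sequence $r\to 0$ of a solution to the linear problem obtained by linearizing $\det D^2$ at $P_a$, posed in the half-space $\{x_n>0\}$ with zero boundary data.

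The quantitative inputs for this compactness step are exactly the hypotheses in play: the oscillation of $g$ on $\mathcal{S}_r$ is $O\bigl((r^{2/(2+\alpha)})^{\gamma}\bigr)=O(r^\beta)$ relative to the leading factor $x_n^\alpha\sim r^{2\alpha/(2+\alpha)}$, which is the precise reason for the choice $\gamma=\beta(2+\alpha)/2$; the boundary $\partial\Omega$ agrees with $\{x_n=0\}$ up to $O(r^2)$ by the interior tangent ball condition \eqref{01}, which is absorbable at these scales; and $\varphi-Q_0$ is $O(r^{2+\beta})$, matching the error scale by the pointwise $C^{2,\beta}$ assumption together with the quadratic separation. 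The limit must be a solution of the linearized equation with at most quadratic growth in $x'$ and at most $(2+\alpha)$-power growth in $x_n$; the admissible directions are spanned exactly by slidings $\tau$ and one-parameter variations of $a$. Subtracting the correct such combination yields $\tau_{k+1}$, $a_{k+1}$ and improves the estimate by a factor $\rho^\beta$. Summing the resulting geometric series produces a limiting sliding $A$ and coefficient $a$ and the claimed expansion.

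The main obstacle is the classification of the limit solutions, since the standard uniformly-elliptic Liouville/Schauder toolbox does not apply because of the $x_n^\alpha$ degeneracy at the boundary. One has to exploit the explicit structure of the linearization of the Monge--Amp\`ere operator at $P_a$ to show that homogeneous solutions in the half-space with the prescribed anisotropic growth are exhausted by the two-parameter family of slidings and changes of $a$. The quantitative versions of \cite{S2} promised in Section \ref{s2} will also be essential for controlling the geometry of the Monge--Amp\`ere sections uniformly along the iteration, in particular for justifying that the anisotropic rescalings preserve the nondegeneracy of $u$ and the engulfing properties of its sections at every scale.
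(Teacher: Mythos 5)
Your overall architecture is the same as the paper's: reduce to a model polynomial, iterate with anisotropic rescalings, extract a limit of the normalized errors that solves the linearization (the Grushin-type operator $Lw = x_n^\alpha\Delta_{x'}w + w_{nn}$), use the structure of that limit to pick a correction, and gain a geometric factor at each step. However, your classification of the correction directions is incorrect, and this is not a cosmetic slip — it changes what the iteration has to do.

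You claim the admissible kernel directions of the linearized operator with zero boundary data and the prescribed growth are ``slidings $\tau$ and one-parameter variations of $a$.'' Variations of $a$ correspond to $\partial_a P_a = x_n^{2+\alpha}$, but $L(x_n^{2+\alpha}) = (2+\alpha)(1+\alpha)\,x_n^\alpha \neq 0$, so $x_n^{2+\alpha}$ is not in the kernel of the linearization. The coefficient $a$ is in fact determined once and for all at scale one by the algebraic relation $a\,\det D^2_{x'}Q_0 = g(0)$ (cf. \eqref{a0}); under the anisotropic rescaling $F_h$ the $x_n^{2+\alpha}$ coefficient is preserved exactly, so there is nothing to re-tune. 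What the expansion of bounded kernel solutions actually gives (Lemma \ref{lineq}) is $w \approx (a_0 + a'\cdot x')\,x_n + O((|x'|^2 + x_n^{2+\alpha})^{(3+\alpha)/(2+\alpha)})$: a \emph{slope change} $a_0 x_n$ and slidings $(a'\cdot x')\,x_n$. The slidings you can absorb by choosing $\tau_{k+1}$. The slope change $a_0 x_n$ you cannot absorb into the family $\{P_a\circ A\}$ at all — there is no parameter in that family producing a linear-in-$x_n$ term — so it must be shown to vanish.

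This is the second, and more serious, missing ingredient. The normalization $\nabla u(0)=0$ is supposed to kill the $a_0$ direction, but the compactness step you sketch only gives \emph{uniform} convergence of the normalized errors, and uniform convergence does not control the normal derivative of the limit at a boundary point; you cannot simply conclude that the limit has zero slope. The paper handles this with a separate barrier argument (Step 3 of Lemma \ref{il}): explicit sub/super-solutions of the rescaled equation, built from $U_0$, show $|a_0|\le C\theta_0$ directly from $u_n(0)=0$, quantitatively and without passing to the limit. Without an argument of this kind — some quantitative mechanism tying the pointwise normalization $\nabla u(0)=0$ to the behavior of the blow-up limit near $x_n=0$ — the iteration does not close, because the $a_0 x_n$ term sits at exactly the same anisotropic order as the slidings and would otherwise destroy the $\rho^\beta$ gain. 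So: replace ``variations of $a$'' in your kernel description by the slope term $a_0 x_n$, keep $a$ fixed from the start, and supply the barrier argument that bounds $a_0$ using $\nabla u(0)=0$.
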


In fact the conclusion above states that $u$ is pointwise $C^{2,\beta}$ with respect to the degenerate metric specific to our equation (\ref{02}). This is a stronger statement than the pointwise $C^{2,\beta}$ with respect to the Euclidean metric.

In the second theorem we establish the $C^{2,\beta}$ regularity of the solutions to \eqref{02} in a neighborhood of the origin.
\begin{theorem}\label{T02}
Let $\beta \in (0,\frac{2}{2+\alpha})$, and $\beta \le \alpha$. Suppose that $u$ satisfies the equation \eqref{02} near the origin with
$$\p \Om \in C^{2,\beta}, \quad u|_{\p \Om} \in C^{2,\beta}, \quad g \in C^\gamma \quad \mbox{where} \quad \gamma=\beta \, \, \frac{2+\alpha}{2}.$$
If $u$ separates quadratically from its tangent plane at $0$ then
$$u \in C^{2,\beta}(\overline \Om \cap B_\delta(0)) \quad \mbox{for some small $\delta>0$}.$$
\end{theorem}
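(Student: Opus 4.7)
The plan is to upgrade the pointwise $C^{2,\beta}$ estimate at the single boundary point $0$ provided by Theorem \ref{T01} to a uniform $C^{2,\beta}$ estimate in a full neighborhood $\ov \Om \cap B_\delta(0)$. The strategy combines two ingredients: pointwise $C^{2,\beta}$ estimates at every nearby boundary point, via a local application of Theorem \ref{T01}, together with interior $C^{2,\beta}$ Schauder estimates applied to affinely rescaled sections.

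First, I would verify that the hypotheses of Theorem \ref{T01} hold at every boundary point $y \in \p \Om$ close to $0$, with uniform quantitative bounds. Most of the hypotheses follow at once from the $C^{2,\beta}$ regularity of $\p \Om$ and of $u|_{\p \Om}$ and the $C^\gamma$ regularity of $g$. The delicate point is the quadratic separation of $u$ from its tangent plane at $y$: this propagates from the separation at $0$ together with the boundary $C^2$ estimates of \cite{S2}, which guarantee that the tangential Hessian of $u$ along $\p \Om$ varies continuously. The quantitative version of Theorem \ref{T01} then provides, at every such $y$, a sliding $A_y$, a tangential quadratic polynomial $Q_y$, and a coefficient $a(y)>0$, with
$$u(y+A_y x) - u(y) - \nabla u(y)\cdot x = Q_y(x') + a(y)\, x_n^{2+\alpha} + O(|x'|^2 + x_n^{2+\alpha})^{1+\frac{\beta}{2}},$$
in tangent coordinates at $y$, with constants uniform in $y$ and with $y \mapsto (A_y, Q_y, a(y))$ H\"older continuous.

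For an interior point $x_0$ with $h := d_{\p \Om}(x_0)$ small, I would let $y \in \p \Om$ be its nearest boundary projection and pick a section $S_{h'}(y)$ of appropriate size containing $x_0$ well in its interior. The boundary localization theorem of \cite{S2} asserts that such sections are uniformly comparable to ellipsoids, so after affinely renormalizing $S_{h'}(y)$ to unit size the rescaled function solves a normalized Monge-Amp\`ere equation whose right-hand side is uniformly $C^\gamma$. Caffarelli's interior Schauder estimates then produce a $C^{2,\beta}$ bound on the renormalized solution, and scaling back yields the pointwise estimate at $x_0$. Gluing these interior estimates with the boundary expansions via a standard covering argument gives a uniform $C^{2,\beta}$ bound on $D^2 u$ in $\ov \Om \cap B_\delta(0)$.

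The main obstacle is converting the pointwise expansions — which live naturally in the degenerate metric $|x'|^2+x_n^{2+\alpha}$ induced by the equation — into honest Euclidean H\"older continuity of $D^2 u$. The tangential components behave well, but $\p_{nn} u$ vanishes only like $x_n^\alpha$ at the boundary, and the sliding $A_y$ contributes analogous errors in the mixed derivatives. Comparing expansions at two points separated by Euclidean distance $r$ therefore produces degenerate errors of size $O(r^\alpha)$, which would not fit into a $C^\beta$ bound in general. This is precisely the point at which the extra hypothesis $\beta \le \alpha$ enters: it ensures $O(r^\alpha) \le O(r^\beta)$, so that these degenerate terms are automatically H\"older of order $\beta$ in the Euclidean sense, and the boundary-interior gluing produces the claimed $C^{2,\beta}$ regularity.
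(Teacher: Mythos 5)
Your proposal is correct and follows the same route as the paper: reduce to the normalized setting of Proposition \ref{C2b}, apply the pointwise boundary estimate of Proposition \ref{c2b} at every nearby boundary point (after checking its hypotheses propagate, as in Lemma \ref{L_2}), combine with interior Schauder estimates on rescaled sections tangent to $\p\Om$ (Lemma \ref{L_1}), and observe that $\beta\le\alpha$ is exactly what makes $x_n^\alpha\in C^\beta$ so that the degenerate expansion translates into Euclidean $C^\beta$ continuity of $D^2u$. The one place your sketch is more casual than the paper is the final gluing: rather than a generic covering argument, the paper compares $D^2u(x_0)$ and $D^2u(x_1)$ through a three-case analysis according to whether the two points share a common tangent section and on the relative sizes of their distances to $\p\Om$, with Lemma \ref{L_3} (H\"older continuity of $D^2u$ and the coefficient $a$ along $\p\Om$) serving as the boundary anchor.
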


We remark that the hypothesis $\beta \le \alpha$ is necessary in the second theorem. This can be easily seen from the expansion given in Theorem \ref{T01}.

As a consequence of our Schauder estimates, we obtain the global $C^{2,\beta}$ regularity, for all $\beta < \frac{2}{2+n}$,  of the eigenfunctions to the Monge-Amp\`ere operator $(\det D^2 u)^{1/n}$ on smooth, bounded and uniformly convex domains in $\R^n$. This problem was 
first investigated by Lions \cite{L}. He showed that there exists a unique (up to positive multiplicative constants) nonzero convex eigenfunction $u\in C^{1,1}(\overline{\Omega})\cap C^{\infty}(\Omega)$ to the eigenvalue problem for the Monge-Amp\`ere operator $(\det D^2 u)^{1/n}$: 
\begin{equation}\label{EP}
\left\{
 \begin{alignedat}{2}
   (\det D^2 u)^{\frac{1}{n}} ~& = \lambda|u|~&&\text{in} ~  \Omega, \\\
u &= 0~&&\text{on}~ \p\Omega.
 \end{alignedat} 
  \right.
  \end{equation}
We note that $\lambda $, called the Monge-Amp\`ere eigenvalue of $\Om$,  has an interesting stochastic interpretation given by Lions \cite{L}. A 
variational characterization of $\lambda$ was found by Tso \cite{T}.

The question of global higher derivative estimates up to the boundary of the eigenfunction $u$ is a well known open problem, see for example Trudinger and Wang's survey paper \cite{TW2}. 

In the two dimensional case, Hong-Huang-Wang in \cite{HHW} resolved this question in the affirmative. In
higher dimensions, the global $C^2 (\ov \Om)$ estimate of the eigenfunction $u$ was obtained by the second
author in \cite{S2}. In Theorem \ref{T035} below we settle the question of global smoothness of the eigenfunction $u$ in higher dimensions.

Notice that $u \in C^{1,1}(\overline{\Omega})$ implies that we can write $|u|=g \, d_{\p \Om} $ with $g \in C^{0,1}(\overline{\Omega})$. Since $u=0$ on $\p \Om$ we also have 
that $u$ separates quadratically on $\p \Om$ from its tangent planes, and therefore we can apply Theorem \ref{T02}.

\begin{theorem}\label{T03} Let $\Om$ be a bounded and uniformly convex domain in $\R^n$.
Assume $\p \Om \in C^{2,\beta}$ with $\beta \in (0,\frac{2}{2+n})$, and $u$ satisfies \eqref{EP}. 
 Then $u \in C^{2,\beta}(\ov \Om)$.
\end{theorem}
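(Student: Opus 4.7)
The plan is to verify the hypotheses of Theorem \ref{T02} at each boundary point and then conclude by a covering argument combined with the known interior smoothness. By Lions \cite{L}, the eigenfunction $u$ lies in $C^{1,1}(\ov\Om)\cap C^\infty(\Om)$, is strictly convex, negative in $\Om$, and vanishes on $\p\Om$. Rewriting \eqref{EP} as $\det D^2 u=\lambda^n|u|^n$ and setting $g:=|u|/d_{\p\Om}$, the hint in the excerpt gives $g\in C^{0,1}(\ov\Om)$, so the equation becomes
\[
\det D^2u=\lambda^n g^n\,d_{\p\Om}^{\,n},
\]
which matches \eqref{02} with exponent $\alpha=n$ and right-hand side $\lambda^n g^n$.

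I fix an arbitrary $x_0\in\p\Om$ and check each hypothesis of Theorem \ref{T02} at $x_0$. The regularity $\p\Om\in C^{2,\beta}$ is assumed and the boundary datum $u|_{\p\Om}\equiv 0$ is trivially $C^{2,\beta}$. Since $\beta<\tfrac{2}{2+n}$, the exponent $\gamma=\beta\tfrac{2+n}{2}$ is strictly less than $1$, so the Lipschitz regularity of $g$ immediately yields $\lambda^n g^n\in C^{\gamma}$ at $x_0$. The positivity $g(x_0)>0$ holds because $\nabla u(x_0)=0$ together with $u\le 0$ and convexity would force $u\equiv 0$ along an inward segment, contradicting strict convexity.

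The remaining ingredient, and the main technical obstacle, is the quadratic separation of $u$ from its tangent plane at $x_0$. On $\p\Om$ this follows from the hint: $\nabla u(x_0)$ is a positive multiple of the inward normal, so if $l_{x_0}$ denotes the tangent plane to $u$ at $x_0$, then $(u-l_{x_0})|_{\p\Om}=-l_{x_0}|_{\p\Om}\ge c|x-x_0|^2$ near $x_0$ by uniform convexity of $\p\Om$. Combined with the boundary $C^2$ estimate of \cite{S2}, which provides matching quadratic bounds in the transversal direction, this gives quadratic separation in $\ov\Om$ in a neighborhood of $x_0$.

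With all the hypotheses in place, Theorem \ref{T02} yields $u\in C^{2,\beta}(\ov\Om\cap B_{\delta_{x_0}}(x_0))$ for some $\delta_{x_0}>0$. By compactness of $\p\Om$ finitely many such balls cover the boundary, and combining with the interior regularity $u\in C^\infty(\Om)$ produces $u\in C^{2,\beta}(\ov\Om)$, as required.
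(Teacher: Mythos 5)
Your proof takes the same route the paper does (and the paper essentially carries out this argument in the paragraph preceding Theorem \ref{T03}): reduce to $\det D^2 u=\lambda^n g^n d_{\p\Om}^{\,n}$ with $g=|u|/d_{\p\Om}\in C^{0,1}(\ov\Om)$, verify the hypotheses of Theorem \ref{T02}, and conclude by covering the boundary and combining with Lions' interior smoothness. The exponent check $\gamma=\beta\tfrac{2+n}{2}<1$ and $\beta\le\alpha=n$ is correct, and the nondegeneracy $g(x_0)>0$ is correctly justified via convexity.

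One concrete error in the quadratic-separation step: you assert that $\nabla u(x_0)$ is a positive multiple of the \emph{inward} normal, but it is a positive multiple of the \emph{outward} normal. Since $u=0$ on $\p\Om$, $u<0$ in $\Om$, and $u$ is convex, $u$ decreases as one moves inward, so $\nabla u$ at a boundary point points outward. With your stated orientation the computation would give $-l_{x_0}|_{\p\Om}=-\nabla u(x_0)\cdot(x-x_0)\le 0$, which is the wrong sign; with the correct orientation, since for $x\in\p\Om$ near $x_0$ the outward-normal component of $x-x_0$ is $\le -c|x-x_0|^2$ by uniform convexity, one does get $-\nabla u(x_0)\cdot(x-x_0)\ge c|\nabla u(x_0)|\,|x-x_0|^2>0$. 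This is a local slip rather than a gap in the method, but the direction should be fixed since as written the stated premise contradicts the stated conclusion. Also note that Theorem \ref{T02} (see its quantitative version Theorem \ref{T2.2}) only requires quadratic separation of the boundary data, i.e.\ of $u|_{\p\Om}$ from the tangent plane; the additional remark about transversal quadratic bounds from \cite{S2} is not needed for the hypothesis.
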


Our final theorem shows that the Monge-Amp\`ere eigenfunctions are in fact smooth up to the boundary of $\Omega$ if $\p\Omega$ is smooth. 
To do this, we 
first perform a Hodograph transform and then a partial Legendre transform to an eigenfunction $u$.  
The linearized operator of the resulting equation can be treated as a degenerate Grushin-type operator. By using the result of Theorem \ref{T03} and Schauder estimates for Grushin-type operators, 
we obtain our global smoothness result.

\begin{theorem}\label{T035} Let $\Om$ be a bounded and uniformly convex domain in $\R^n$.
Assume $\p \Om \in C^{\infty}$ and $u$ satisfies  \eqref{EP}.  Then $u \in C^{\infty}(\ov \Om)$.
\end{theorem}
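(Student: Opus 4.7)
The plan is to upgrade the $C^{2,\beta}$ boundary regularity of Theorem \ref{T03} to full $C^\infty$ regularity by a local argument at each boundary point, using a partial Legendre transform to recast the degenerate Monge-Amp\`ere equation as one whose linearization has Grushin structure, and then bootstrapping via Schauder estimates for Grushin-type operators. Interior smoothness $u \in C^\infty(\Om)$ is classical (Lions \cite{L}), so the task reduces to proving $u \in C^\infty$ in a neighborhood of an arbitrary boundary point $p_0 \in \p \Om$.

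To this end I would first flatten $\p\Om$ near $p_0$ by a smooth local diffeomorphism, reducing to the model situation where $u$ is defined on $\{x_n \ge 0\} \cap B_\delta$, vanishes on $\{x_n = 0\}$, satisfies $\p_{x_n} u(0) > 0$ (forced by the quadratic separation of $u$ on $\p\Om$), and solves an equation of the form $\det D^2 u = G(x)\, x_n^n$ with $G \in C^\infty$ and $G(0) > 0$ (the power $\alpha = n$ arises because $|u| \sim x_n$ near $\p\Om$). Theorem \ref{T03} gives $u \in C^{2,\beta}$, and the quadratic separation forces $D^2_{x'} u$ to be uniformly positive definite. Following the recipe announced before Theorem \ref{T035}, I would then perform a Hodograph transform (using $u$'s values to straighten level sets) and a partial Legendre transform in the tangential variables: setting $y' = \nabla_{x'} u(x)$, $y_n = x_n$, and $v(y) = x' \cdot y' - u(x', x_n)$, one obtains a $C^{1,\beta}$ diffeomorphism fixing the boundary $\{y_n = 0\}$. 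The transformed Monge-Amp\`ere equation for $v$ has a linearization at a fixed background solution of the schematic form
\begin{equation*}
\mathcal{G} w \,=\, a^{ij}(y)\, \p_{y_i'} \p_{y_j'} w \,+\, b(y)\, y_n^{\sigma}\, \p_{y_n}^2 w \,+\, (\text{lower order}),
\end{equation*}
with some $\sigma > 0$, with $a^{ij}$ uniformly elliptic in the tangential block, and with $b$ bounded above and below. This $\mathcal{G}$ is a degenerate operator of Grushin type along $\{y_n = 0\}$.

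The final step is a bootstrap: each tangential derivative $\p_{y'}^k v$ solves an equation with leading operator comparable to $\mathcal{G}$ and a source built from lower-order derivatives and the (smooth) coefficients of the original problem. Applying the boundary Schauder theory for Grushin-type operators yields improved regularity of $\p_{y'}^k v$ in $y'$, after which the equation itself algebraically recovers the $y_n$-regularity of $\p_{y_n}^2 v$. Iterating produces $v \in C^\infty$ up to $\{y_n = 0\}$, and pulling back through the Legendre and Hodograph transforms and the boundary flattening gives $u \in C^\infty(\ov\Om)$ near $p_0$. The main obstacle is the setup of Schauder estimates for the precise Grushin-type operator $\mathcal{G}$ that emerges, in the H\"older scale adapted to the degenerate metric (of the form $|x'|^2 + y_n^{2+\sigma}$) dictated by the weight $y_n^{\sigma}$; one has to track how tangential differentiation interacts with this degenerate coefficient, and to place the source terms produced by the nonlinearity in the correct Grushin-H\"older category so that the bootstrap actually closes.
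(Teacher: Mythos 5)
Your high-level plan -- Hodograph transform, partial Legendre transform, Grushin-type Schauder bootstrap -- is indeed the strategy the paper announces before Theorem \ref{T035}. But there is a genuine gap in your setup that would cause the partial Legendre transform to fail exactly where it is needed.

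You propose to first flatten $\p\Om$ by a smooth local diffeomorphism so that $u$ vanishes on $\{x_n=0\}$ and satisfies $\det D^2 u = G(x)\,x_n^n$. Two things go wrong. First, a general diffeomorphism flattening the boundary does not preserve the Monge--Amp\`ere structure (only affine changes of variable preserve $\det D^2 u$ up to a Jacobian factor), so you cannot simultaneously have a flat boundary and an equation of the clean form $\det D^2 u = G\, x_n^n$ by a naive flattening. Second, and more seriously, once $u\equiv 0$ on $\{x_n=0\}$ you have $\nabla_{x'}u=0$ and $D^2_{x'}u=0$ on $\{x_n=0\}$; the claim that ``quadratic separation forces $D^2_{x'}u$ to be uniformly positive definite'' is false in this flattened picture. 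The partial Legendre map $y'=\nabla_{x'}u(x',x_n)$ then collapses the whole boundary slice $\{x_n=0\}$ to a single point -- it is not a diffeomorphism at the boundary -- and the transformed equation \eqref{eqn-u*} is never obtained. What the paper's Hodograph transform actually does is resolve both issues at once: it is a \emph{rotation} of coordinates in $\R^{n+1}$, so the graph of $u$ becomes the graph of a new function $\tilde u$ and the Hessian determinant transforms through the rotation-invariant Gauss curvature, yielding $\det D^2\tilde u = y_n^n\,\tilde u_n^{n+2}$. Since $u=0$ exactly on $\p\Om$, this rotation automatically sends $\p\Om$ into $\{y_n=0\}$, so the boundary is flattened as a by-product and no auxiliary diffeomorphism is needed. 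Crucially, $\tilde u$ does \emph{not} vanish on $\{y_n=0\}$: its boundary values are the uniformly convex graph function $\phi$ of $\p\Om$, so $D^2_{y'}\tilde u>0$ up to the boundary and the partial Legendre transform is a genuine $C^{1,\beta}$ diffeomorphism there. This is the observation your proposal is missing.

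A smaller but still necessary point: your bootstrap needs an extra ingredient to close in the normal direction. The Grushin-type Schauder theory, iterated over tangential derivatives, controls $D^m_{y'}D^l_{y_n}u^*$ only for $l\le 2$. The higher $y_n$-derivatives are recovered by differentiating \eqref{eqn-u*} in $y_n$, and this works precisely because $\alpha=n$ is a nonnegative integer, so the weight $y_n^\alpha$ differentiates into polynomials rather than fractional (possibly singular) powers of $y_n$. You should state this explicitly, as the bootstrap would not close for non-integer $\alpha$.
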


The paper is organized as follows. In Section \ref{s2} we introduce notation and state the quantitative versions of our Schauder estimates. By suitable rescalings we reduce Theorems \ref{T01} and \ref{T02} to the Propositions \ref{c2b} and \ref{C2b}. These propositions are concerned with the $C^{2,\beta}$ estimates for solutions
of \eqref{02} which are small perturbations of the particular solution $U_0$ introduced in Section \ref{s2}.
The proofs of Propositions \ref{c2b} and \ref{C2b} will be given in Sections \ref{s3} and \ref{s4} respectively. In Section \ref{s5}, we prove Theorem \ref{T035}. Some technical results used in the 
proofs of the main theorems are collected in the final section, Section \ref{res_sec}.

\section{Quantitative versions of the Schauder estimates}
\label{s2}
We introduce some notation. We denote points in $\R^n$ ($n\geq 2$) as
$$x=(x_1,...,x_n)=(x',x_n), \quad \quad \quad x' \in \R^{n-1}.$$
Let $\R^n_{+}=\{x\in\R^n: x_n>0\}$ be the upper half-space. Denote by $I$ and $I'$ the identity matrices of size $n$ and $n-1$ respectively.
We denote by $B_r(x)$ the ball of radius $r$ and center $x$, and by $B_r'(x')$ the ball in $\R^{n-1}$ of radius $r$ and center $x'$. For simplicity, we use $B_r$ to denote $B_r(0)$
and $B_r^{+}:= B_r \cap \{x_n\geq 0\}.$ 

Given a convex function $u$ defined on a convex set $\ov \Om$, we denote by $S_h(x_0)$ the section centered at $x_0$ and height $h>0$,
$$S_h(x_0):=\{x \in \ov \Om | \quad  u(x) < u(x_0)+\nabla u(x_0) \cdot (x-x_0) +h \}.$$
We denote for simplicity $S_h=S_h(0)$, and sometimes when specifying the dependence on the function $u$ we use the notation $S_h(u)=S_h$.

Most of the time we write our estimates in the sections of the particular function
$$U_0(x):=\frac 12 |x'|^2 + \frac{1}{(1+\alpha)(2+\alpha)}\, x_n^{2+\alpha},\quad x\in\R^n_{+}.$$
We denote by $A$ a sliding along $x_n=0$, i.e., $Ax=x+\tau x_n$ for some $\tau\in \R^n$ with $\tau \cdot e_n=0$. Also we denote by $F_h$ the diagonal matrix
$$F_h=\text{diag}(h^\frac 12, h^\frac 12,..,h^\frac 12, h ^\frac{1}{2+\alpha}).$$
We often rescale solutions in $S_h(u)$ according to the formula
$$u_h(x)=h^{-1}u(A F_hx).$$
For a $C^{\gamma}$ continuous function $f$ on a set $S$, we denote by $[f]_{C^{\gamma}(S)}$ its homogeneous seminorm, i.e,
$$[f]_{C^{\gamma}(S)} =\sup_{x, y\in S,~x\neq y}\frac{|f(x)-f(y)|}{|x-y|^{\gamma}}.$$
When $\gamma=1$, the right hand side is denoted by $[f]_{C^{0, 1}(S)}$.

Throughout the paper we think of the constants $n$, $\alpha$ and $\beta$ as being fixed. We refer to all positive constants depending only $n$, $\alpha$ and $\beta$ 
as {\it universal constants} and we denote them by $c$, $C$, $c_i$, $C_i$. 
The dependence of various constants also on other parameters like $\rho, \rho'$ will be denoted by $C(\rho, \rho'), c(\rho, \rho')$.

We start by writing quantitative versions of Theorems \ref{T01} and \ref{T02}.
\begin{theorem}\label{T2.1}
Let $\alpha, \beta$, $\gamma$ be as in Theorem \ref{T01} and $\rho$, $\rho' \ll \rho$ be small parameters. Let $\Om$ be a bounded convex domain in the upper half-space with $0\in\p\Om$, and assume $\Om$ has an interior ball of radius $\rho$ at the origin. 
Let $u\in C(\ov \Om)$ be convex such that
$u(0)=0, \nabla u(0)=0$
with $$ \left |u(x)- \frac 12 |x'|^2 \right | \leq \frac{1}{\rho'}|x'|^{2+\beta} ~\text{in}~\p\Om\cap B_{\rho},$$
and $u \ge \rho'$ on $\p \Om \setminus B_\rho$.
Assume $$ \det D^2 u= g \, d_{\p \Om}^\alpha \quad \mbox{in} \quad \Om \cap B_\rho, \quad \quad \det D^2 u \le \frac{1}{\rho'} \quad \mbox{in} \quad \Om \setminus B_\rho $$
with $$|g-1|\le \frac{1}{\rho'}|x|^{\gamma}~\text{in}~\Om\cap B_{\rho}.$$
Then there exists a sliding $A$ along $x_n=0$  with $|A-I|\leq C$ such that
$$\left| u(Ax) - U_0(x)\right|\leq C(|x'|^2 + x_n^{2+\alpha})^{1+\frac{\beta}{2}}~\text{in}~ B_{\rho},$$ with $C$ depending on $n$, $\alpha$, $\beta$, $\rho$, $\rho'$.
\end{theorem}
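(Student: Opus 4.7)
The plan is to reduce Theorem \ref{T2.1} to Proposition \ref{c2b} by a single rescaling that places $u$ into the perturbation regime near the canonical solution $U_0$. The rescaling uses the sliding and the anisotropic dilation $F_h$ dictated by the degenerate geometry of equation (\ref{02}), and relies crucially on the boundary $C^2$ estimates and shape theorem for sections of \cite{S2}.

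The first step is to invoke \cite{S2} at the origin: for every small $h$, the section $S_h(u)$ is comparable, up to a sliding $A_h$ along $\{x_n = 0\}$ with $|A_h - I|\le C$, to the model section $F_h(B_1^+)$ associated with $U_0$. Setting $u_h(x) := h^{-1} u(A_h F_h x)$, the rescaled function is convex, vanishes with zero gradient at $0$, and is defined on a set comparable to $B_1^+$. Because $F_h$ scales the tangential directions by $h^{1/2}$ and the normal direction by $h^{1/(2+\alpha)}$, the quadratic boundary data $\frac12 |x'|^2$ is invariant under the rescaling; the quadratic-separation error $(1/\rho')|x'|^{2+\beta}$ shrinks by a factor $h^{\beta/2}$, the rescaled right-hand side satisfies $|\tilde g - 1| \le C h^{\beta/2}$ thanks to the choice $\gamma = \beta(2+\alpha)/2$, and the rescaled distance function is comparable to the distance to $\{x_n = 0\}$ with an error that decays like a power of $h$ since $\partial \Omega$ has an interior tangent ball of radius $\rho$.

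After choosing $h = h_0$ small enough (with $h_0$ depending on $\rho, \rho'$), the rescaled function $u_{h_0}$ satisfies the hypotheses of Proposition \ref{c2b} in a unit-size section. That proposition supplies a sliding $A'$ along $\{x_n = 0\}$, with $|A' - I|\le C$, and the pointwise expansion
$$\bigl| u_{h_0}(A' x) - U_0(x) \bigr| \le C\bigl( |x'|^2 + x_n^{2+\alpha}\bigr)^{1+\beta/2}.$$
Undoing the rescaling, with the composite sliding $A = A_{h_0}\, F_{h_0} A' F_{h_0}^{-1}$ (which is again a sliding because $F_{h_0}$ is diagonal and preserves the $\{x_n=0\}$ subspace), yields the desired expansion for $u$ inside $B_\rho$ with a constant $C = C(n,\alpha,\beta,\rho,\rho')$.

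The main obstacle is verifying that every ingredient of Proposition \ref{c2b} is available uniformly after the single rescaling. This requires (i) the shape theorem of \cite{S2} to localize sections at the origin so that $F_{h_0}$ is the correct anisotropic dilation; (ii) a quantitative boundary $C^2$ estimate ensuring that the interior deviation $\| u_{h_0} - U_0 \|_{L^\infty}$ becomes small together with the boundary and right-hand side perturbations; and (iii) the precise matching of Hölder exponents: the relation $\gamma = \beta(2+\alpha)/2$ is exactly what turns the $C^\gamma$ oscillation of $g$ on a Euclidean ball into a $C^{\beta/2}$ oscillation on the rescaled unit ball, so that all smallness parameters enter Proposition \ref{c2b} on the same footing as $h_0^{\beta/2}$. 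The far-field hypotheses ($u \ge \rho'$ and $\det D^2 u \le 1/\rho'$ outside $B_\rho$) are used only to guarantee that the sections $S_h(u)$ stay inside $B_\rho$ for $h \le h_0$ and therefore play no further role once the rescaling is performed.
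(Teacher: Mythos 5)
Your proposal is correct and matches the paper's own reduction: the authors also deduce Theorem \ref{T2.1} from Proposition \ref{c2b} by a single anisotropic rescaling $u_h(x)=h^{-1}u(A_hF_hx)$ justified by the boundary $C^2$/section-shape theorem of \cite{S2}, with the distance-function comparison (Lemma \ref{ddh}), boundary flatness, and the exponent identity $\gamma=\beta(2+\alpha)/2$ all entering exactly as you describe (the symmetric normalization $D$ appearing in Lemma \ref{good_rescale} is indeed unnecessary here since $D^2_{x'}\ph(0)=I'$ and $g(0)=1$ are already built into the hypotheses of Theorem \ref{T2.1}). Your bookkeeping of the composite sliding $A=A_{h_0}F_{h_0}A'F_{h_0}^{-1}$ and the observation that $F_h$ conjugation maps slidings to slidings are also consistent with the paper's computation of $A_{k+1}$ in the proof of Proposition \ref{c2b}.
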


We remark that in the proof of Theorem \ref{T2.1} we only use the following property of the distance function
$$x_n \ge d_{\p \Om}(x) \ge \left(x_n- \rho^{-1}|x'|^2\right)^+ .$$

\begin{theorem}\label{T2.2} 
Let $\alpha, \beta$, $\gamma$ be as in Theorem \ref{T02} and $\rho$, $\rho' \ll \rho$ be small parameters. Assume $\p \Omega \in C^{2,\beta}$ 
in $B_\rho$ with $\|\p \Om\|_{C^{2,\beta}} \le 1/\rho'$. Let $u\in C(\ov \Om)$ be convex such that
$$u(0)=0, \quad \nabla u(0)=0, \quad u=\ph(x') \quad \mbox{on $\p \Om \cap B_\rho$,}$$
with $$\|\ph\|_{ C^{2,\beta}(B_\rho')} \le \frac {1}{\rho'} , \quad \rho' I' \le D_{x'}^2 \ph(0) \le \frac{1}{\rho'} \, I',$$ and $u \ge \rho'$ on $\p \Om \setminus B_\rho$.
Assume $$ \det D^2 u= g \, d_{\p \Om}^\alpha \quad \mbox{in} \quad \Om \cap B_\rho, \quad \quad \det D^2 u \le \frac{1}{\rho'} \quad \mbox{in} \quad \Om \setminus B_\rho $$
with $$g \in C^\gamma(\ov \Om \cap B_\rho), \quad \quad \|g\|_{C^\gamma} \le \frac{1}{\rho'}, \quad \rho'\le g(0).$$
Then $$\|u\|_{ C^{2,\beta} (\ov \Om \cap B_\delta)} \le C,$$ with $\delta$ and $C$ depending on $n$, $\alpha$, $\beta$, $\rho$, $\rho'$.
\end{theorem}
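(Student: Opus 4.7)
The plan is to derive Theorem \ref{T2.2} from Theorem \ref{T2.1} applied at \emph{every} boundary point in a small neighborhood of the origin, combined with classical interior Schauder estimates of Caffarelli for Monge-Amp\`ere equations with positive, H\"older right hand side. The role of the hypothesis $\beta\le \alpha$ is to convert the pointwise expansion provided by Theorem \ref{T2.1}, which is $C^{2,\beta}$ with respect to the degenerate quasi-distance $(|x'|^2+x_n^{2+\alpha})^{1/2}$, into a genuine Euclidean $C^{2,\beta}$ expansion; indeed when $\beta\le \alpha$ one has $x_n^{(2+\alpha)(1+\beta/2)}\le C\, x_n^{2+\beta}$, and the degenerate remainder $(|x'|^2+x_n^{2+\alpha})^{1+\beta/2}$ is controlled by $|x|^{2+\beta}$.

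First I would fix a point $y_0\in \p\Om\cap B_{\delta_0}$ and change coordinates by a rigid motion so that $y_0$ becomes the origin and the inner normal to $\p\Om$ at $y_0$ is $e_n$. Because $\p\Om\in C^{2,\beta}$ with a uniform bound, $\Om$ has an interior tangent ball of a uniform radius $\rho$ at every such $y_0$, and the local graph representation of $\p\Om$ gives a local $x'$-graph $x_n=\psi(x')$ with $\psi(0)=0$, $\nabla \psi(0)=0$ and $\|\psi\|_{C^{2,\beta}}$ controlled; similarly the boundary data in the new frame has controlled $C^{2,\beta}$-norm. Subtracting the affine tangent plane of $u$ at $y_0$ (whose existence and uniform bound follow from the boundary $C^{1,1}$ bound and the quadratic separation coming from the nondegeneracy of $D^2_{x'}\varphi(0)$ and the boundary $C^2$ estimate of \cite{S2}), the normalized convex function $\tilde u$ satisfies $\tilde u(0)=0$, $\nabla \tilde u(0)=0$, and on $\p\Om$ near $0$ we have $|\tilde u(x)-\tfrac12\langle M_{y_0}x',x'\rangle|\le C|x'|^{2+\beta}$ with $M_{y_0}$ a positive definite matrix bounded above and below. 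A further linear rescaling in the $x'$-variables normalizes $M_{y_0}$ to the identity and puts $\tilde u$ in the exact hypothesis of Theorem \ref{T2.1}, with structural constants $\rho,\rho'$ depending only on the data of Theorem \ref{T2.2}.

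Applying Theorem \ref{T2.1} at every $y_0\in\p\Om\cap B_{\delta_0}$ produces a sliding $A_{y_0}$ and a number $a_{y_0}>0$ such that, after undoing the normalizations,
\[
u(x)=\ell_{y_0}(x)+Q_{y_0}(x-y_0)+a_{y_0}(d_{\p\Om}(x))^{2+\alpha}+O\!\bigl(|x-y_0|^{2+\beta}\bigr),
\]
with constants uniform in $y_0$; this is exactly the pointwise $C^{2,\beta}$ statement with a quadratic polynomial $P_{y_0}$ at every boundary point (since the degenerate term $a_{y_0}(d_{\p\Om})^{2+\alpha}$ is $C^{2,\beta}$ in the Euclidean sense when $\beta\le\alpha$, thanks to the regularity of $d_{\p\Om}$). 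Standard arguments then upgrade pointwise $C^{2,\beta}$ at every boundary point to $C^{2,\beta}$ in a one-sided neighborhood: one compares the polynomials $P_{y_0}$ at nearby boundary points, uses the uniform modulus to bound their differences, and invokes Caffarelli's interior $C^{2,\beta}$ estimates in sections fully contained in $\Om$ (where the right hand side $g\,d_{\p\Om}^{\alpha}$ is $C^{\beta}$ with controlled seminorm away from $\p\Om$) to propagate the control to interior points.

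The hard part is the uniformity of the constants when applying Theorem \ref{T2.1} at a generic boundary point. This requires showing that the quadratic separation constant and the structural hypotheses of Theorem \ref{T2.1} can be chosen independent of $y_0$. This uses crucially the boundary $C^2$ estimates from \cite{S2}, which ensure that after subtraction of the tangent plane and the appropriate linear normalization of the sections $S_h(y_0)$, the rescaled function is a uniformly bounded perturbation of $U_0$ in a section of definite size, so that the hypothesis of Theorem \ref{T2.1} holds with universal $\rho,\rho'$. Once this uniformity is established, the stitching of the pointwise expansions into a neighborhood $C^{2,\beta}$ estimate is standard.
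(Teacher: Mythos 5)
Your proposal follows essentially the same route as the paper: normalize using the boundary $C^2$ estimate of \cite{S2} (this is Lemma \ref{good_rescale}), apply the pointwise boundary estimate (Theorem \ref{T2.1}, in its normalized form Proposition \ref{c2b}) at every nearby boundary point via Lemma \ref{L_2}, compare the resulting expansions to control the $C^\beta$ modulus of $D^2u$ and of the coefficient $a$ along $\partial\Omega$ (Lemma \ref{L_3}), and combine with interior Caffarelli estimates in tangent sections where the right hand side is non-degenerate (Lemma \ref{L_1}), using $\beta\le\alpha$ exactly as you describe to pass from the degenerate to the Euclidean modulus. The only place where you understate the work is the final stitching: in the paper this is not purely routine but requires a three-case geometric comparison of two interior points according to the sizes of their maximal tangent sections, with the key bounds $|x_0-x_1|\gtrsim d_0$ or $|x_0-x_1|\gtrsim|z|$ obtained by inspecting the rescaled sections, as in the proof of Proposition \ref{C2b}.
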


As mentioned in the introduction, the pointwise $C^2$ estimates for solutions to \eqref{02} were established in \cite[Theorem 2.4]{S2}. This means that after a suitable 
rescaling (see Lemma \ref{good_rescale}), it suffices to prove our quantitative theorems in the setting when $u$, and the data $g$, $\ph$, $\p \Om$ are small perturbations of the particular 
solution $U_0$,
$$U_0(x):=\frac 12 |x'|^2 + \frac{1}{(1+\alpha)(2+\alpha)}\, x_n^{2+\alpha},$$
with the data $g \equiv 1$, $ \ph =\frac 12 |x'|^2$, $\p \Om=\{x_n=0\}$. We state below our Theorems \ref{T2.1} and \ref{T2.2} in this setting.

Let $\Om$ be a convex set with boundary given by the graph of a $C^{1,1}$ convex function $q(x')$, i.e.,
$$\Om=\{x_n > q(x')\}, \quad \quad q(0)=0, \quad \nabla_{x'}q(0)=0.$$
Let $u$ be a convex function defined in $\ov \Om \cap S_1(U_0)$ that satisfies
\begin{equation}\label{Ma1}
\det D^2u=g \, d_{\p \Om}^\alpha \quad \mbox{in} \quad \Om \cap S_1(U_0), 
\end{equation} 
\begin{equation}\label{Ma2}
u(0)=0, \quad \nabla u(0)=0.
\end{equation}
and
 \begin{equation}\label{Ma3}
u(x)=\ph(x') \quad \mbox{on} \quad \p \Om.
\end{equation}

First, we state the pointwise $C^{2,\beta}$ estimates at the origin for solutions to \eqref{Ma1} which are perturbations of $U_0$.
\begin{proposition}[Pointwise $C^{2,\beta}$ estimates]\label{c2b}
Assume $u$ satisfies \eqref{Ma1}, \eqref{Ma2}, \eqref{Ma3} with
$$|u-U_0| \le \eps, \quad |g-1|\leq \ep |x|^\gamma \quad \quad \quad \mbox{in} \quad \Omega \cap S_1(U_0),$$
with $\gamma= \frac{2+\alpha}{2}\, \,  \beta\, \in (0,1)$, and
 $$|x_n|\leq \eps |x'|^2 \quad \mbox{and} \quad \left|\ph-\frac 12 |x'|^2 \right |\leq \eps |x'|^{2+\beta} \quad \quad \mbox{on} \quad \p\Omega\cap S_1(U_0),$$ 
for some $\eps \le \eps_0$ universal. Then there exists a sliding $A$ with $|A-I| \le C \eps$ such that
$$|u(Ax)-U_0| \le C \eps (|x'|^2+x_n^{2+\alpha})^{1+\frac \beta 2}.$$
\end{proposition}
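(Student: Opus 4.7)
The plan is a Campanato-style iteration on the sections $S_{h_0^k}(U_0)$ of the model solution, for a fixed universal $h_0 \in (0,1)$, improving at each step the approximation of $u$ by a sliding of $U_0$. The inductive claim is that there exist slidings $A_k x = x + \tau_k x_n$ with $\tau_0 = 0$ and $|\tau_{k+1} - \tau_k| \le C\eps h_0^{k\beta/2}$, such that
\[
|u(A_k x) - U_0(x)| \le \eps\, h_0^{k(1+\beta/2)} \quad \text{in } S_{h_0^k}(U_0).
\]
Since the sliding increments sum geometrically, the $A_k$ converge to a sliding $A$ with $|A-I| \le C\eps$. Any point $x \in S_1(U_0)$ lies in a dyadic shell on which $|x'|^2 + x_n^{2+\alpha} \sim h_0^k$, so the telescoping bound immediately yields the desired pointwise estimate.

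The inductive step reduces, via the scale-invariant rescaling
\[
u_h(x) = h^{-1} u(A_k F_h x), \qquad F_h = \operatorname{diag}(h^{1/2},\dots, h^{1/2}, h^{1/(2+\alpha)}),
\]
to an approximation lemma: there exist universal $h_0 \in (0,1)$ and $\eps_0>0$ so that the hypotheses with $\eps \le \eps_0$ force the existence of a sliding $A$ with $|A-I| \le C\eps$ satisfying
\[
|u(Ax) - U_0(x)| \le \tfrac{1}{2}\eps\, h_0^{1+\beta/2} \quad \text{in } S_{h_0}(U_0).
\]
A direct computation shows $u_h$ solves a Monge-Amp\`ere equation of the same type in a rescaled domain, and the scalings of the error terms match exactly: the $C^\gamma$ oscillation of $g$ scales by $h_0^{\gamma/(2+\alpha)} = h_0^{\beta/2}$, the tangential $C^{2,\beta}$ error of $\ph$ also scales by $h_0^{\beta/2}$, and the boundary flatness strictly improves. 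The choice $\gamma = \beta(2+\alpha)/2$ is precisely what makes all these scalings compatible, so the rescaled data again satisfy the hypotheses with the same universal $\eps_0$.

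The approximation lemma is proved by compactness. Assume it fails along sequences $\eps_k \to 0$ and $u_k, g_k, \ph_k, q_k$. The rescaled differences $w_k = (u_k - U_0)/\eps_k$ are uniformly bounded, and the pointwise boundary $C^2$ estimates from \cite{S2} together with interior regularity provide compactness on the natural sections of $U_0$. A subsequence converges to a function $w$ on $\ov{\R^n_+} \cap S_1(U_0)$ solving the formal linearization
\[
\mathcal{L}w := \Delta_{x'} w + x_n^{-\alpha} w_{nn} = h(x),
\]
with $h$ bounded (being the limit of $(g_k-1)/\eps_k$), boundary condition $w=0$ on $\{x_n=0\}$, and $w(0)=0$, $\nabla_{x'} w(0)=0$. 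The final step is to establish that any such $w$ admits the Taylor-type expansion $w(x) = \tau \cdot x' \, x_n + O\bigl((|x'|^2 + x_n^{2+\alpha})^{1+\beta/2}\bigr)$ at the origin for some $\tau \perp e_n$. Since $U_0(x+\tau x_n) - U_0(x) = \tau \cdot x' x_n + \frac 12 |\tau|^2 x_n^2$, this first-order mode corresponds exactly to an infinitesimal sliding; taking $Ax = x + \tau x_n$ then gives the required bound for $u_k$ at scale $h_0$ for all large $k$, contradicting the failure hypothesis.

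The main obstacle is the boundary regularity theory for the degenerate Grushin-type operator $\mathcal{L}$, specifically a Liouville/Taylor expansion about the origin that respects the intrinsic metric $(|x'|^2 + x_n^{2+\alpha})^{1/2}$ and identifies the sliding mode as the unique nontrivial polynomial piece of order below $2+\beta$. The restriction $\beta < 2/(2+\alpha)$ enters precisely here, keeping the relevant intrinsic H\"older exponent below $1$ and preventing interference with higher-order polynomial solutions of $\mathcal{L}$. This Liouville-type result is the technical heart of the proposition, and is presumably among the tools collected in Section \ref{res_sec}.
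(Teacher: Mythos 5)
The overall strategy matches the paper's: iterate an improvement-of-flatness lemma on sections $S_{\theta_0^k}(U_0)$, where the one-step lemma is proved by compactness to the Grushin linearization and the Taylor expansion of Lemma \ref{lineq}, with the choice $\gamma=\beta(2+\alpha)/2$ ensuring the error terms scale coherently. However, there is a genuine gap at the heart of the one-step approximation lemma.

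Lemma \ref{lineq} gives the expansion $w(x)=(a_0+a'\cdot x')\,x_n + O\bigl((|x'|^2+x_n^{2+\alpha})^{(3+\alpha)/(2+\alpha)}\bigr)$ for a bounded solution of $Lw=0$ vanishing on $\{x_n=0\}$. The $a'\cdot x'\,x_n$ term is exactly the infinitesimal sliding mode and can be absorbed by updating $A$, as you observe. But the $a_0\,x_n$ term \emph{cannot} be absorbed by a sliding, and nothing in your compactness argument forces $a_0=0$ or even makes $a_0$ small: uniform convergence of $v_k=(u_k-U_0)/\eps_k$ does not transfer the pointwise gradient condition $\nabla u_k(0)=0$ to $w_n(0)=0$ in the limit, since the convergence is not $C^1$ up to the degenerate boundary. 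Your stated limiting expansion $w(x)=\tau\cdot x'\,x_n+O(\cdots)$ silently drops this mode, and if $a_0$ is of order one the claimed decay $\tfrac12\eps\,h_0^{1+\beta/2}$ at scale $h_0$ fails: the iteration does not close.

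The paper handles exactly this point in Step 3 of the proof of Lemma \ref{il}: working on $u$ itself (not on the compactness limit), it uses explicit upper and lower barriers built from $U_0$ perturbed by $\tfrac{a_0}{2}x_n$ plus small quadratic and $x_n^{2+\alpha}$ corrections, and the sign condition $\nabla u(0)=0$ then forces a contradiction unless $|a_0|\le C_1\theta_0$. This is a quantitative step, not a qualitative Liouville statement, and it is what makes the constant $a_0$ enter the error budget with the right smallness. To repair your argument you would need either such a direct barrier comparison on $u$, or an additional normalization of the approximating slidings that eliminates the $a_0$ mode by construction and is verified to survive the limit; simply citing the Taylor expansion of $L$ in Section \ref{res_sec} is not enough.
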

Next, we state the $C^{2,\beta}$ estimates in a neighborhood of the origin for solutions to \eqref{Ma1} which are perturbations of $U_0$.
\begin{proposition}[$C^{2,\beta}$ estimates]\label{C2b}
Assume $u$ satisfies \eqref{Ma1} and \eqref{Ma3} with
$$|u-U_0| \le \eps_0, \quad [g]_{C^\gamma}\le \eps_0, \quad [D^2 \ph]_{C^\beta} \le \eps_0, \quad  \|q\|_{C^{2, \beta}}\le \eps_0,$$
and $\beta \in (0,\frac{2}{2+\alpha})$, $\beta \le \alpha$, $\gamma= \frac{2+\alpha}{2}\, \,  \beta$, and $\eps_0$ small depending on $n$, $\beta$, $\alpha$. 
Then $$ \|u\|_{C^{2,\beta}(\ov \Om \cap S_{1/2}(U_0))} \le C.$$ 
\end{proposition}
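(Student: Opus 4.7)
The plan is to localize: I would establish pointwise $C^{2,\beta}$ expansions at every point of $\ov \Om \cap S_{1/2}(U_0)$ with a uniform constant, and then invoke the standard characterization of $C^{2,\beta}$ regularity on a closed set in terms of such pointwise expansions (cf.\ \cite{CC}). Boundary points are handled by Proposition \ref{c2b}, while interior points are handled by Caffarelli's interior Schauder estimates. The work consists in arranging, at each point where Proposition \ref{c2b} is applied, that its smallness hypotheses hold with a constant controlled by $\eps_0$.

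For a boundary point $y_0 \in \p \Om \cap S_{3/4}(U_0)$, I would translate $y_0$ to the origin, rotate so that the inward normal to $\p \Om$ at $y_0$ coincides with $e_n$, and subtract the affine tangent map of $u$ at $y_0$. The transformed function $\tilde u$, domain $\tilde \Om$, graph function $\tilde q$, boundary data $\tilde \ph$, and right-hand side $\tilde g$ continue to satisfy a Monge-Amp\`ere equation of the form \eqref{Ma1}. I would then verify that the hypotheses of Proposition \ref{c2b} hold for $\tilde u$ with constant $C\eps_0$: the bound $|x_n|\le C\eps_0 |x'|^2$ on $\p \tilde \Om$ follows from $\|q\|_{C^{2,\beta}}\le \eps_0$; the estimate $|\tilde \ph - \tfrac12|x'|^2|\le C\eps_0 |x'|^{2+\beta}$ follows from $[D^2\ph]_{C^\beta}\le \eps_0$ together with the fact that $|u-U_0|\le \eps_0$ forces $D^2\ph(0)\approx I'$; the condition on $\tilde g$ is immediate from $[g]_{C^\gamma}\le \eps_0$; and $|\tilde u - U_0|\le C\eps_0$ is obtained by a comparison argument, using the pointwise $C^2$ boundary estimates from \cite{S2} to match the quadratic normalization of $u$ at $y_0$. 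Applying Proposition \ref{c2b} then yields a pointwise $C^{2,\beta}$ expansion of $u$ at $y_0$ with universal constant.

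For an interior point $y_0 \in \Om$ at distance $d:=d_{\p \Om}(y_0)$ from the boundary, the right-hand side $g\, d_{\p \Om}^\alpha$ is $C^\gamma$ and comparable to $d^\alpha>0$ on $B_{d/2}(y_0)$. After rescaling that ball to unit size, the resulting Monge-Amp\`ere equation has a uniformly positive $C^\gamma$ right-hand side, so Caffarelli's interior $C^{2,\gamma}$ Schauder estimate \cite{C,CC} yields a scale-invariant pointwise $C^{2,\beta}$ bound at $y_0$. Combining these interior bounds with the boundary pointwise expansion at the closest boundary point to $y_0$ provides a uniform pointwise $C^{2,\beta}$ control throughout $\ov \Om \cap S_{1/2}(U_0)$, from which the claimed norm bound follows.

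The main obstacle I expect is the boundary step, more specifically the propagation of the smallness $|\tilde u - U_0|\le C\eps_0$ from the original origin to a generic boundary point $y_0$. The pointwise $C^2$ boundary estimates of \cite{S2} produce, at each $y_0$, a sliding map $A_{y_0}$ and a quadratic normalization adapted to the degenerate geometry of \eqref{02}, and one must show that $A_{y_0}$ and this quadratic data depend continuously on $y_0$ with modulus $O(\eps_0)$. Establishing this uniform continuity of the pointwise normalization data, so that the rescaled solution at $y_0$ lies within the perturbative regime required by Proposition \ref{c2b}, is the delicate technical step; once it is in hand, the proof reduces to a standard gluing of pointwise expansions.
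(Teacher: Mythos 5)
Your high-level plan — pointwise expansions at boundary points via Proposition \ref{c2b}, interior estimates in between, then glue — is the same architecture as the paper's proof, and your boundary step corresponds to the paper's Lemma \ref{L_2}. But there are two places where the proposal as written would not go through.

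First, the interior estimate cannot be carried out on Euclidean balls. You propose to rescale $B_{d/2}(y_0)$ to unit size and invoke Caffarelli's interior $C^{2,\gamma}$ estimate, reasoning that the right-hand side $g\,d_{\p\Om}^\alpha\sim d^\alpha$ becomes uniformly positive. That is true, but Caffarelli's estimates require the rescaled solution to be \emph{normalized}: after subtracting the tangent plane, its unit section must be comparable to a ball. Near $\p\Om$ the sections of $u$ at a point at height $\sim h$ are highly eccentric — roughly $h^{1/2}$ in the tangential directions and $h^{1/(2+\alpha)}$ in $x_n$ — precisely because $\det D^2u\sim x_n^\alpha$. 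An isotropic rescaling of $B_{d/2}(y_0)$ does not make the Hessian of $u$ comparable to the identity; the eigenvalues remain spread over a ratio of order $d^{-\alpha}$, and the interior Schauder theory does not apply. The paper instead works with the maximal section $S_{t}(x_t)$ tangent to the boundary and rescales it via the anisotropic map $AF_h$ (Lemma \ref{L_1}), which is the geometry in which the equation becomes uniformly elliptic with $C^\gamma$ data.

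Second, you correctly sense that the delicate point is in patching, but you locate it in the wrong place. Propagating the smallness $|\tilde u - U_0|\le C\eps_0$ to a generic boundary point $y_0$ is not the hard part: the hypothesis $|u-U_0|\le\eps_0$ is assumed in all of $S_1(U_0)$, so after translating $y_0$ to $0$, rotating the normal to $e_n$, subtracting the affine part, and applying a symmetric linear map $D$ with $De_n=\mu e_n$ to normalize $D^2_{x'}\ph(y_0)$ and $g(y_0)$, the transformed $u$ is automatically $C\eps_0$-close to $U_0$ (this is exactly the paper's Lemma \ref{L_2}). The genuinely delicate step, which a ``standard gluing of pointwise expansions'' does not resolve, is that the tangent paraboloid at $y_0$ contains a term $\frac{a(y_0)}{(1+\alpha)(2+\alpha)}((x-y_0)\cdot\nu_{y_0})^{2+\alpha}$ whose coefficient $a(y_0)$ and direction $\nu_{y_0}$ vary with $y_0$. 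To conclude a $C^\beta$ modulus of continuity of $D^2u$ across $\ov\Om\cap S_{1/2}(U_0)$, one must show that $D^2u$ and $a$ are $C^\beta$ \emph{along} $\p\Om$; this is the content of the paper's Lemma \ref{L_3}, proved by comparing the pointwise expansions at $0$ and $z$ on the common domain $B_{|z|}\cap\Om$. That Hölder continuity, combined with the interior estimate in sections (Lemma \ref{L_1}), is then assembled via a three-case argument depending on whether $x_1$ lies in the section $S_{t_0/4}(x_0)$ and on the relative sizes of $|z|^2$ and $d_0^{2+\alpha}$. Without that ingredient the pointwise expansions at different boundary points cannot be matched up, and the $C^{2,\beta}$ norm bound does not follow.
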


 As mentioned above, to reduce the proof of Theorem \ref{T2.2} (or Theorem \ref{T2.1}) to that of Proposition \ref{C2b} (or Proposition \ref{c2b}),
we need to show that by suitably rescaling $u$ and the domain $\Omega$, the new function is well approximated by $U_0$ on a new domain with very flat boundary near the origin. More 
precisely, we have the following rescaling lemma whose proof we postpone till Section \ref{res_sec}.
We recall that we denote by $F_h$ the diagonal matrix
$$F_h=\text{diag}(h^\frac 12, h^\frac 12,..,h^\frac 12, h ^\frac{1}{2+\alpha}).$$

\begin{lemma} [Rescaling]
\label{good_rescale}
Suppose that $u$ satisfies the hypotheses of Theorem \ref{T2.2}. Then there exist two linear transformations $D$ symmetric, and $A$ a sliding, with $|D|,|A|\leq C$ such that the rescaling  $(u_h, \Omega_h)$ of  $(u, \Omega)$ given by
$$u_h (x) =\frac{u(DAF_h x)}{h}, \quad \quad ~x\in \Omega_h = (DAF_h)^{-1}\Omega,$$
and the boundary data $\varphi_h, q_h$ of $(u_h,\Om_h)$ satisfy in $S_1 (U_0)\cap \Omega_h$ the hypothesis of Proposition \ref{C2b}, provided that $h=h(\ep_0)$ is chosen small depending on the universal constants and $\ep_0$.
\end{lemma}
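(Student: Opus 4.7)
The plan is to use the pointwise $C^2$ boundary estimate at the origin from \cite[Theorem 2.4]{S2}---which applies under the hypotheses of Theorem \ref{T2.2} because $\rho' I' \le D^2_{x'}\ph(0)$ provides the required quadratic separation on $\p \Om$---to produce the sliding $A$ and the symmetric matrix $D$, and then to check that the anisotropic rescaling by $F_h$ makes every remaining quantity small. Specifically, \cite{S2} supplies a sliding $A$ along $\{x_n=0\}$, a symmetric positive-definite $(n-1)\times(n-1)$ matrix $M$ close to $D^2_{x'}\ph(0)$, and a constant $c>0$ such that
$$u(Ax)=\tfrac{1}{2}x'^T M x'+c\,x_n^{2+\alpha}+o(|x'|^2+x_n^{2+\alpha})\quad\text{near } 0.$$
Consistency with $\det D^2 u=g\,d_{\p \Om}^\alpha$ at the origin forces $(1+\alpha)(2+\alpha)\,c\,\det M=g(0)$. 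Set
$$D=\mathrm{diag}\bigl(M^{-1/2},\lambda\bigr),\qquad \lambda=[(1+\alpha)(2+\alpha)c]^{-1/(2+\alpha)};$$
then $D$ is symmetric, $|A|,|D|\le C$ hold by the hypotheses of Theorem \ref{T2.2}, and the normalization yields $\tfrac{1}{2}(Dx)'^T M(Dx)'+c((Dx)_n)^{2+\alpha}=U_0(x)$.

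With $T=DAF_h$ and $u_h(x)=h^{-1}u(Tx)$, a direct calculation using $\det A=1$ and $(\det F_h)^2=h^{n-1+2/(2+\alpha)}$ gives
$$\det D^2 u_h(x)=(\det D)^2\,h^{-\alpha/(2+\alpha)}\,g(Tx)\,d_{\p \Om}(Tx)^\alpha.$$
Because $A$ and $D$ preserve $\{x_n=0\}$, and $\p \Om$ is $C^{2,\beta}$-flat at $0$, one has $d_{\p \Om}(Tx)=\lambda h^{1/(2+\alpha)}d_{\p \Om_h}(x)\bigl(1+O(h^{(1+\alpha)/(2+\alpha)})\bigr)$ on $S_1(U_0)\cap\Om_h$. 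The identity $\lambda^{2+\alpha}\det M=g(0)^{-1}$, equivalent to $(\det D)^2\lambda^\alpha g(0)=1$, pins the leading value of $g_h$ to $1$. Standard anisotropic scaling then yields
$$\|q_h\|_{C^{2,\beta}}\le Ch^{(1+\alpha)/(2+\alpha)},\qquad [D^2\ph_h]_{C^\beta}\le Ch^{\beta/2},\qquad [g_h]_{C^\gamma}\le Ch^{\gamma/(2+\alpha)}=Ch^{\beta/2},$$
where the last identity uses $\gamma=\beta(2+\alpha)/2$. Moreover, the approximation in the previous paragraph gives $\|u_h-U_0\|_{L^\infty(S_1(U_0)\cap\Om_h)}=o(1)$ as $h\to 0$. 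Choosing $h=h(\eps_0)$ sufficiently small, each of these quantities becomes $\le\eps_0$, verifying all the hypotheses of Proposition \ref{C2b}.

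The main obstacle is the bookkeeping for the anisotropic distance function $d_{\p \Om}(Tx)$: one must confirm that the quadratic curvature of $\p \Om$ contributes only an $h^{(1+\alpha)/(2+\alpha)}$ perturbation after rescaling, and that the sliding $A$ correctly absorbs any $x'\cdot x_n$ cross-term in the $C^2$ expansion of $u$ at $0$ so that the leading profile matches $U_0$ exactly. Once this is under control, the exponents in $F_h$ dictated by the equation guarantee that all error powers above are strictly positive, so a single small $h$ works simultaneously for all the bounds.
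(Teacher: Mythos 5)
Your proposal is correct and takes essentially the same route as the paper: invoke the pointwise $C^2$ boundary estimate of \cite[Theorem 2.4]{S2} to obtain the sliding and the quadratic normal form, normalize with a symmetric $D$ having $e_n$ as an eigenvector, and check that the rescaling by $F_h$ makes all remaining quantities small. Two small points worth tightening: since you extract $A$ before introducing $D$, the composition your computation actually uses is $ADF_h$ rather than the statement's $DAF_h$ (an easy fix, because $AD=DA''$ for another sliding $A''$, or one can normalize by $D$ first and then apply \cite{S2} to $u\circ D$, which is how the paper proceeds), and the bound $[g_h]_{C^\gamma}\le Ch^{\beta/2}$ requires a Lipschitz (or H\"older) estimate on the ratio $d_{\p\Om}(DAF_hx)/\bigl(h^{1/(2+\alpha)}d_{\p\Om_h}(x)\bigr)$ rather than the pointwise $1+O(h^{(1+\alpha)/(2+\alpha)})$ you quote — the paper establishes exactly this in Lemma \ref{ddh} via the quotient estimate of Lemma \ref{C11}.
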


We now say a few words about the proofs of the above propositions. 

We prove Propositions \ref{C2b} by combining the interior $C^{2,\beta}$ estimates together with pointwise
$C^{2,\beta}$ estimates in Proposition \ref{c2b} applied at points on the boundary. The geometric properties of sections of $u$ near the boundary play a crucial role.

The key ingredient in the proof of Proposition \ref{c2b} is the following iteration lemma.

\begin{lemma}[Iteration step]\label{il}
 Assume that for some small $\ep \le \ep_0$, u satisfies \eqref{Ma2}, \eqref{Ma3},
 $$|u-U_0|\leq \ep \quad \quad \mbox{in} \quad \Omega \cap S_1(U_0),$$
and in this set
$$(1+\delta_0 \ep) x_n^\alpha \ge \det D^2u \ge (1-\delta_0 \ep)\left((x_n-\delta_0 \ep)^+ \right)^\alpha.$$
If $$|x_n|\leq \delta_0 \eps |x'|^2 \quad \mbox{and} \quad \left|\ph-\frac 12 |x'|^2 \right |\leq \delta_0\eps |x'|^2 \quad \quad \mbox{on} \quad \p\Omega\cap S_1(U_0),$$ 
then there exists a rescaling $\tilde u$ of $u$ at height $h=\theta_0$,
$$\tilde u(x):=\frac 1 h u(A_h F_h x), \quad x\in \tilde{\Omega}:=(A_h F_h)^{-1}\Omega \quad \mbox{with} \quad  h=\theta_0,   \quad |A_h -I| \le C \ep,$$
such that 
 $$|\tilde u-U_0|\leq \ep \theta_0^\frac \beta 2 \quad \quad \mbox{in} \quad \tilde{\Omega} \cap S_1(U_0).$$
The constants above $\eps_0$, $\delta_0$, $\theta_0$ (small) and $C$ are universal: they depend only on $n$, $\alpha$ and $\beta$.
\end{lemma}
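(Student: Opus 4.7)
\emph{Strategy.} The plan is an improvement--of--flatness argument by contradiction, compactness, and linearization, in the spirit of Caffarelli. Suppose the lemma fails for any choice of the universal constants. Then there exist sequences $\eps_k\to 0$, convex functions $u_k$ on domains $\Omega_k$, and boundary data $\ph_k$ satisfying the hypotheses with $\eps=\eps_k$, such that no sliding $A$ with $|A-I|\le C\eps_k$ gives the rescaled function $\tilde u_k$ the claimed closeness to $U_0$. Set $v_k:=(u_k-U_0)/\eps_k$; then $|v_k|\le 1$, $v_k(0)=0$ and $\nabla v_k(0)=0$. Since $|x_n|\le\delta_0\eps_k|x'|^2$ on $\p\Omega_k$, the domains $\Omega_k$ converge in Hausdorff sense to $\{x_n>0\}\cap S_1(U_0)$. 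In the interior of $\{x_n>0\}$ the bounds on $\det D^2u_k$ yield uniform ellipticity on compact sets, so Caffarelli's interior theory gives $v_k\to v$ in $C^2_{\mathrm{loc}}$ along a subsequence; near $\{x_n=0\}$, the boundary $C^2$ estimates of \cite{S2} allow the limit to be taken continuously up to the boundary, and the trace satisfies $|v(x',0)|\le\delta_0|x'|^2$.

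\emph{Linearized equation and regularity.} Linearizing $\det D^2 u_k=g_k d_{\p\Omega_k}^\alpha$ around $U_0$ (whose inverse Hessian is $\mathrm{diag}(I',x_n^{-\alpha})$), and using $|g_k-1|=O(\delta_0\eps_k)$ together with $d_{\p\Omega_k}=x_n+O(\delta_0\eps_k)$, the limit $v$ satisfies
\[ \L v := x_n^\alpha\,\Delta_{x'}v + v_{nn} = O(\delta_0)\quad\text{in}\quad \{x_n>0\}\cap S_1(U_0). \]
The crucial input is the pointwise $C^{2,\beta_1}$ boundary regularity for the Grushin--type operator $\L$ in the natural degenerate metric $r^2:=|x'|^2+x_n^{2+\alpha}$, for any $\beta_1<\tfrac{2}{2+\alpha}$. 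Since $\beta<\tfrac{2}{2+\alpha}$, fix $\beta_1\in(\beta,\tfrac{2}{2+\alpha})$. Evaluating $\L v$ on $\{x_n=0\}$ forces $v_{nn}(x',0)=O(\delta_0)$, so the quadratic part of $v$ at the origin reduces to $\tfrac12 (x')^T H' x' + \beta_0\cdot x'\,x_n$, with $|H'|\le C\delta_0$ (inherited from the trace bound) and $|\beta_0|\le C$, giving
\[ \bigl|v(x) - \tfrac12 (x')^T H' x' - \beta_0\cdot x'\,x_n\bigr| \le C\, r^{2+\beta_1} + C\delta_0. \]

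\emph{Sliding and conclusion.} Set $A_{\theta_0} y := y+\tau y_n$ with $\tau:=-\eps_k\beta_0$, so $|A_{\theta_0}-I|\le C\eps_k$. The identity $U_0(A_{\theta_0}y)-U_0(y) = \tau\cdot y'y_n + \tfrac12|\tau|^2 y_n^2$ shows that this sliding absorbs exactly the cross term $\beta_0\cdot y'y_n$ in $v$. After rescaling $y=F_{\theta_0}x$ with $x\in S_1(U_0)$ (so $y\in S_{\theta_0}(U_0)$ and $r(y)^2\le C\theta_0$), a direct computation combined with the expansion above yields
\[ |\tilde u_k(x) - U_0(x)| \le \eps_k\bigl(C\theta_0^{\beta_1/2}+C\delta_0\bigr) + O\!\left(\eps_k^2\theta_0^{-(1+\alpha)/(2+\alpha)}\right). \]
Choose $\theta_0$ small enough that $C\theta_0^{\beta_1/2}\le\tfrac13\theta_0^{\beta/2}$ (possible since $\beta_1>\beta$), then $\delta_0$ small enough that $C\delta_0\le\tfrac13\theta_0^{\beta/2}$, and finally $\eps_0$ small enough to absorb the quadratic-in-$\eps_k$ remainder. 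This contradicts the assumed failure.

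\emph{Main obstacle.} The principal difficulty is precisely the pointwise $C^{2,\beta_1}$ boundary regularity for the degenerate operator $\L=x_n^\alpha\Delta_{x'}+\p_{nn}$ in the metric $r^2=|x'|^2+x_n^{2+\alpha}$ with exponent arbitrarily close to $\tfrac{2}{2+\alpha}$; this is where the structural features of the equation (and not just generic uniformly elliptic theory) really enter. The remaining steps --- linearization, rescaling, and the sliding computation --- are standard bookkeeping once this regularity is in hand.
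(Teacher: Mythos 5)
Your plan (compactness to reduce to the linearized Grushin operator $L=x_n^\alpha\Delta_{x'}+\partial_{nn}$, Grushin boundary regularity for the limit, and a sliding to absorb the cross term $x'\cdot x_n$) is the same route the paper takes, but there is a genuine gap: you do not control the pure normal term $a_0 x_n$ in the expansion of $v$, and this is the step the paper spends an entire substep (and a pair of barrier constructions) on. Since the trace of $v$ vanishes to second order but $v$ is only Lipschitz in $x_n$ near $\{x_n=0\}$, the Grushin regularity lemma gives an expansion of the form
$$\bigl|v(x)-(a_0+a'\cdot x')\,x_n\bigr|\le C\,(|x'|^2+x_n^{2+\alpha})^{\frac{3+\alpha}{2+\alpha}},$$
with an a priori nonzero $a_0$; your stated expansion drops the $a_0 x_n$ piece. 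This term is fatal: under the anisotropic rescaling $x\mapsto F_h x$ the contribution of $\ep a_0 x_n$ to $|u_h-U_0|$ scales like $\ep\,|a_0|\,h^{-\frac{1+\alpha}{2+\alpha}}$, which blows up as $h\to 0$, and no sliding can absorb it (a sliding only produces $\tau\cdot x'\,x_n$). You cannot simply invoke $\nabla v_k(0)=0$ and pass to the limit, because the convergence $v_k\to v$ is only $C^2_{\mathrm{loc}}$ in the interior $\{x_n>0\}$; near the boundary you only have the uniform bound $|v_k|\le 2\delta_0+Cx_n$ from barriers, which is compatible with $a_0\ne 0$ in the limit. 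The paper closes this gap by a second barrier argument (Step 3 of its proof): using the original nonlinear equation and $\nabla u(0)=0$, it constructs sub- and supersolutions on $S_{\theta_0}(U_0)$ whose normal derivatives at $0$ have a definite sign when $|a_0|>C_1\theta_0$, and concludes $|a_0|\le C_1\theta_0$, which is exactly the smallness one needs before choosing $\theta_0$ to beat the contribution of $a_0 x_n$ in the rescaled estimate.

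Two minor points, neither fatal: (i) The extra term $\frac12 (x')^T H' x'$ you allow in the expansion of $v$ is spurious---in the compactness limit the boundary trace of $v$ vanishes, so the limit solution $w$ of $Lw=0$ has $w=0$ on $\{x_n=0\}$ and its expansion has no pure $x'$--$x'$ quadratic; your bound $|H'|\le C\delta_0$ does let you absorb it as an error, but it muddles the role of $\delta_0$. (ii) The paper's argument is written quantitatively (barriers directly yield $|v-w|\le\eta$) rather than by contradiction/compactness, but this is bookkeeping, not a different idea.
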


The proof of Lemma \ref{il} follows the standard perturbation method: we approximate $u-U_0$ by solutions of the Grushin-type operator
$$ L \, w=0, \quad \quad L \, w:=x_n^{\alpha}\Delta_{x^{'}} w + w_{nn},$$
which is the linearized equation of the Monge-Amp\`ere equation $$\det D^2 U_0=x_n^\alpha.$$
Before we proceed with the proof we state the $C^{2, \frac{2}{2+\alpha}}$ estimates for solutions of such linearized equation.

\begin{lemma}
\label{lineq}
 Assume that for some $\alpha>0$, $w$ solves
\begin{equation}L \, w:=x_n^{\alpha}\Delta_{x'} w + w_{nn}=0~\text{in}~ B_1^{+},~~w=0~ \text{on}~ \{x_n=0\}
\label{w-eq}\end{equation}
and $\|w\|_{L^{\infty}(B_{1}^{+})}\leq 1.$
Then in $B_{1/2}^{+}$, $w$ satisfies
$$\left|w(x) -  \left (a_0 + a'\cdot x'\right)x_{n}\right| \le C_{0} \left (|x^{'}|^2 + x_{n}^{2+\alpha}\right)^\frac{3+\alpha}{2+\alpha}$$
with $C_0$ universal, and $a_0\in \R, a'\in \R^{n-1}$ with $|a_0|$, $|a'|$ bounded by a universal constant (i.e. depending only on $n$ and $\alpha$).
\end{lemma}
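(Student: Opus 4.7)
The plan is to prove Lemma~\ref{lineq} by the standard compactness-plus-iteration scheme, adapted to the Grushin-type scaling of the operator $L$. The key invariance is that $L$ commutes with the anisotropic rescaling $w(x)\mapsto w(F_h x)$ up to the multiplicative factor $h^{2/(2+\alpha)}$, and that the degenerate distance $\rho(x):=(|x'|^2+x_n^{2+\alpha})^{1/2}$ transforms by $\rho(F_h x)=h^{1/2}\rho(x)$. In these graded weights, $a_0 x_n$ has homogeneity $2/(2+\alpha)$ and $(a'\cdot x')x_n$ has homogeneity $(4+\alpha)/(2+\alpha)$, while the next $L$-harmonic polynomials vanishing on $\{x_n=0\}$ (combinations such as $|x'|^2 x_n - \tfrac{2(n-1)}{(2+\alpha)(3+\alpha)}x_n^{3+\alpha}$) have homogeneity $(6+2\alpha)/(2+\alpha)$, exactly the decay rate of the claimed remainder.

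Step 1 (a priori boundary bound). I would first record the linear boundary control $|w(x)|\le C_\ast x_n$ in $B_{3/4}^+$ via the barrier $Mx_n$, which itself satisfies $L(Mx_n)=0$ and dominates $|w|$ on the top and lateral parts of $\p B_{3/4}^+$ (if needed, multiplied by a paraboloid factor $1 - c|x'|^2$ to adjust the lateral side). This bound is crucial below because the face $\{x_n=0\}$ does not admit standard elliptic compactness.

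Step 2 (one-step improvement). The heart of the argument is the claim that there are universal $\theta\in(0,1/2)$ and $C_1$ such that for every admissible $w$ with $\|w\|_{L^\infty(B_1^+)}\le 1$ one can find $a_0\in\R$ and $a'\in\R^{n-1}$ with $|a_0|,|a'|\le C_1$ satisfying
\begin{equation*}
\sup_{F_\theta(B_1^+)}\bigl|w(x) - (a_0 + a'\cdot x')x_n\bigr|\le \theta^{(3+\alpha)/(2+\alpha)}.
\end{equation*}
I would prove this by compactness and contradiction: a sequence of counterexamples $w_k$ would, by Step~1 and interior elliptic regularity of $L$ on $\{x_n>\eps\}$, converge locally uniformly to some $w_\infty$ solving $Lw_\infty=0$ in $B_1^+$ with $w_\infty=0$ on $\{x_n=0\}$ and $|w_\infty|\le C_\ast x_n$. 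Classical hypoelliptic theory for the Grushin-type operator $L$ (or a direct separation-of-variables decomposition into $L$-harmonic polynomials) produces, near the origin, an expansion of $w_\infty$ whose terms of weighted degree strictly less than $(6+2\alpha)/(2+\alpha)$ are precisely $(\bar a_0 + \bar a'\cdot x')x_n$, with remainder of order $\rho^{(6+2\alpha)/(2+\alpha)}$. For $\theta$ small the pair $(\bar a_0,\bar a')$ is then a valid approximation for $w_k$ with $k$ large, a contradiction.

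Step 3 (iteration). Once Step~2 is in hand, I iterate using the scaling invariance. Given $w$ and the pair $(a_0,a')$ produced by Step~2, the rescaled function
\begin{equation*}
w_1(x):=\theta^{-(3+\alpha)/(2+\alpha)}\bigl(w - (a_0 + a'\cdot x')x_n\bigr)(F_\theta x)
\end{equation*}
again satisfies $Lw_1=0$ in $B_1^+$, vanishes on $\{x_n=0\}$ (because the subtracted polynomial is $L$-harmonic and zero there) and has $\|w_1\|_{L^\infty(B_1^+)}\le 1$. Applying Step~2 inductively produces polynomials $P_k(x)=(a_0^k+(a^k)'\cdot x')x_n$ whose errors at scale $F_{\theta^k}(B_1^+)$ are bounded by $\theta^{k(3+\alpha)/(2+\alpha)}$. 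Tracking how the coefficients recombine after unraveling the rescaling shows that the increments form a geometric series, so $(a_0^k,a^k)$ is Cauchy with universal-size limits $(a_0,a')$. The resulting approximation at every dyadic scale, combined with $\rho(F_hx)=h^{1/2}\rho(x)$, yields the pointwise statement of the lemma on $B_{1/2}^+$.

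The most delicate point is the compactness step: the degeneracy of $L$ on $\{x_n=0\}$ prevents direct use of standard elliptic compactness, so the uniform linear bound from Step~1 is essential for passing to the limit. A secondary subtlety is identifying the leading polynomial part of $w_\infty$: this relies on knowing that the only $L$-harmonic polynomials vanishing on $\{x_n=0\}$ of weighted degree below $(6+2\alpha)/(2+\alpha)$ are the linear combinations of $x_n$ and $x_i x_n$ with $i<n$, which is what forces the specific form $(a_0+a'\cdot x')x_n$ of the approximation and the exact exponent $(3+\alpha)/(2+\alpha)$ in the remainder.
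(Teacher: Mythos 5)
Your Step~1 barrier bound $|w|\le C_\ast x_n$ agrees with the paper's opening move, and your homogeneity accounting (the weights $2/(2+\alpha)$, $(4+\alpha)/(2+\alpha)$, $(6+2\alpha)/(2+\alpha)$ of $x_n$, $x_ix_n$, and of $|x'|^2x_n$ or $x_n^{3+\alpha}$) is correct and explains the exponent in the remainder. However, there is a logical gap at the heart of Step~2. The compactness-and-contradiction scheme is designed to transfer information from a model equation to a perturbed one; here the operator $L$ is \emph{fixed}, so the hypothetical sequence of counterexamples $w_k$ and the limit $w_\infty$ all solve exactly the same equation with exactly the same boundary condition and the same a priori bounds. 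You then assert that $w_\infty$ admits the desired graded expansion with remainder $O(\rho^{(6+2\alpha)/(2+\alpha)})$ by ``classical hypoelliptic theory'' or ``separation of variables'' --- but that assertion \emph{is} the lemma applied to $w_\infty$, which is an arbitrary admissible solution. The compactness argument therefore adds nothing and the proof is circular unless the expansion for solutions of $Lw=0$ vanishing on $\{x_n=0\}$ is actually established, which is the entire content. Invoking hypoelliptic theory as a black box is not a proof at the level the paper works at (this lemma is precisely the piece of quantitative regularity the paper needs to make rigorous).

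The paper proves the expansion directly, and the route is worth contrasting with yours. After the barrier bound $|w|\le Cx_n$, it observes that the interior $C^{2/(2+\alpha)}$ estimate on a fixed ball $B=B_{1/4}(\tfrac12 e_n)$ away from the degenerate face, applied to the rescalings $w_h(x)=w(F_hx)$ and combined with the linear bound, yields a uniform $C^{2/(2+\alpha)}$ bound on $F_h(B)$ for all $h\in(0,1)$, hence a $C^{2/(2+\alpha)}(\ov B^+_{1/2})$ bound up to $\{x_n=0\}$. Translation invariance of $L$ in the $x'$-directions (applied via difference quotients) bootstraps this to bounds on all pure $x'$-derivatives. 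The equation $w_{nn}=-x_n^\alpha\sum_{j<n}w_{jj}$ then yields $|w_{nn}|\le Cx_n^\alpha$, and further bounds on $w_n$, $w_{in}$, $w_{iin}$. Finally, a double Taylor expansion --- first in $x_n$ using the integral remainder formula and the equation to get $w(0,x_n)=w(0)+w_n(0)x_n-\tfrac{\sum w_{jj}(0)}{(\alpha+1)(\alpha+2)}x_n^{2+\alpha}+O(x_n^{3+\alpha})$ and analogous expansions for $w_i(0,x_n)$ and $w_{ij}(0,x_n)$, then in $x'$ --- produces exactly the polynomial $(a_0+a'\cdot x')x_n$ after using that $w$, $w_i$, $w_{ij}$ vanish at the origin. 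Your iteration-and-rescaling framework in Step~3 is essentially what this Taylor argument achieves implicitly, but you must replace the hypoelliptic black box with a concrete derivation along these lines (rescaled interior estimates plus the equation) to make the argument complete.
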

 
The proof of Lemma \ref{lineq} is standard and will be given in Section \ref{res_sec}.

\section{Proof of Proposition \ref{c2b}}
\label{s3}
In this section, we prove Proposition \ref{c2b}.
We start with the proof of the main lemma.

\begin{proof}[Proof of Lemma \ref{il}]

Assume $u$ satisfies the hypotheses of the lemma and denote
$$v:=\frac{u-U_0}{\ep}.$$
 
 {\it Step 1:} We show that $v$ is well approximated by solutions to the linearized equation \eqref{w-eq}. Precisely, for any small $\eta>0$ we can find a solution $w$ to 
 $$L \, w=0, \quad ~~w=0~ \text{on}~ \{x_n=0\}$$ such that
 $$|v-w| \le \eta \quad \quad \mbox{in} \quad S_{1/2}(U_0) \cap \ov \Om,$$
 provided that $\eps_0(\eta)$, $\delta_0(\eta)$ are chosen sufficiently small depending on $\eta$ and the universal constants.
 
 \
 
 In other words, we need to show that for a sequence of $\eps_0$, $\delta_0 \to 0$ the corresponding $v$'s converge uniformly  in $S_{1/2}(U_0)\cap\ov\Om$ to a solution of $Lw=0$ (along a subsequence).
 
First we show that $v$ separates almost ``linearly" away from $\{x_n=0\}$. 

For each $x_0 \in S_{1/2}(U_0) \cap \{x_n=0\}$ we consider the upper barrier for $u$:
$$ \bar u_{x_0}(x)= \frac{1}{2}|x^{'}|^2 +8 \ep |x^{'}-x_0^{'}|^2 +\frac{1-\delta_0 \ep}{(1 + 16\ep)^{n-1}}\frac{\left ((x_n-  \delta_0\ep)^{+}\right)^{2+\alpha}}{(1+\alpha)(2+\alpha)} 
+ C\ep x_n + 2\delta_0\ep,$$
for some $C$ large to be determined.

 Then, we have
\begin{equation}
   \det D^2 \bar u_{x_0}  = (1-\delta_0 \ep)\left((x_n- \delta_0\ep)^{+}\right)^{\alpha} \leq \det D^2 u~\text{in} ~ S_{1}(U_0)\cap \Omega,
   \label{detwbar}
   \end{equation}
and we easily find that on $\p S_1 (U_0)\cap \Omega$,
 $$\bar{u}_{x_0} \geq U_0 + 8 \ep |x'-x^{'}_0|^2 + \frac C 2 \ep x_n  \ge 1+ \ep \ge u,$$
 and on $\p \Om \cap S_1 (U_0)$,
 $$\bar{u}_{x_0} \geq  \frac{1}{2}|x^{'}|^2 + 2\delta_0 \ep \ge u.$$
 
 By the maximum principle we obtain
 $u \le \bar{u}_{x_0} $ in $\Om \cap S_1(U_0)$ which implies that
 $$v(x_0',x_n) \le 2\delta_0+ C x_n.$$
 
 The opposite inequality follows similarly by considering the lower barrier
 
$$ \underbar u_{x_0}(x)= \frac{1}{2}|x^{'}|^2 -8 \ep |x^{'}-x_0^{'}|^2 +\frac{1+\delta_0 \ep}{(1 - 16\ep)^{n-1}}\frac{x_n^{2+\alpha}}{(1+\alpha)(2+\alpha)} - C\ep x_n -2 \delta_0\ep.$$ 
 
 Then
$$ \det D^2 \underbar u  = (1+\delta_0 \ep) x_n^{\alpha} \ge \det D^2 u~\text{in} ~ S_{1}(U_0)\cap \Omega,$$
and on $\p S_1 (U_0)\cap \Omega$,
 $$\underbar{u}_{x_0} \leq U_0 - 8 \ep |x'-x^{'}_0|^2 - \frac C 2 \ep x_n  \le 1- \ep \le u,$$
 and on $\p \Om \cap S_1 (U_0)$,
 $$\underbar{u}_{x_0} \leq  \frac{1}{2}|x'|^2 - 2\delta_0 \ep \le u.$$
 
 By the maximum principle we obtain
 $\underbar{u}_{x_0}\leq u $ in $\Om \cap S_1(U_0)$ which implies that
 $$v(x_0',x_n) \ge -2\delta_0- C x_n.$$
 
 In conclusion 
 \begin{equation}\label{vl}
 -2\delta_0 -C x_n \le v \le 2\delta_0 + C x_n \quad \mbox{in} \quad S_{1/2}(U_0) \cap \ov \Om.
 \end{equation}
 
 This inequality provides the uniform convergence of the $v$'s near $\{x_n=0\}$. It remains to prove the uniform convergence on compact subsets of $S_1(U_0) \cap \{x_n >0 \}$.
 
 Fix a ball $B_{2 \rho}:=B_{2 \rho}(z_0) \subset S_1(U_0)$ and consider the solution $u_0$ to
 $$\det D^2 u_0 =x_n^ \alpha \quad \mbox{in} \quad B_{\frac 32 \rho}, \quad u_0=u \quad \mbox{on} \quad \p B_{\frac 32 \rho},$$ 
 and the corresponding $$v_0:= \frac{u_0-U_0}{\ep}.$$
 We claim that if $\rho\geq 2\eps\delta_0$ then
 \begin{equation}\label{uu0}
 |u-u_0| \le C(\rho) \ep \delta_0 \quad \mbox{in} \quad B_{\frac 32 \rho}(z_0).
 \end{equation}
 Indeed, notice that in this ball $ B_{\frac 32 \rho}(z_0)$, we have
 $$ |\det D^2u -\det D^2 u_0| \le C(\rho) \eps \delta_0, \quad \mbox{and} \quad \det D^2u, \, 
 \det D^2u_0 \ge c(\rho).$$
 Now we can use maximum principle and the matrix inequality
 $$\det(A+\lambda I) \ge \det A + n \lambda (\det A)^\frac{n-1}{n} , \quad \quad \mbox{if} \quad A \ge 0, \lambda \ge 0,$$
 and obtain in $B_{\frac 32 \rho}(z_0)$
 $$u+ \Psi \le u_0, \quad \quad u_0+ \Psi \le u, \quad \mbox{where} \quad \Psi:=C'(\rho) \, \delta_0 \ep \,  \left (|x-z_0|^2-(\frac 32 \rho)^2 \right), $$
for some large $C'(\rho)$ and this proves our claim \eqref{uu0}.

From \eqref{uu0} we find
$$|v-v_0| \le C(\rho) \delta_0 \quad \mbox{in} \quad B_{\frac 32 \rho}(z_0),$$ 
 hence $v-v_0 \to 0$ uniformly in $B_{\frac 32 \rho}(z_0)$ as $\delta_0 \to 0$.

 Next we show that, as $\eps_0 \to 0$, the corresponding $v_0$'s converge uniformly (along a subsequence) in $\ov{B_\rho}(z_0)$ to a solution of $Lw=0$.
 Note that
 $$0=\frac{1}{\ep}(\det D^2 u_0- \det D^2 U_0) = Tr (A \,  D^2v_0)$$
 where
 $$A: = \int_{0}^{1} \text{cof} (D^2 U_0 + t(D^2 u_0- D^2 U_0)) dt.$$
Here $\text{cof}~ M$ represents the cofactor matrix of $M$.

As $\eps_0 \to 0$ we have $\ep \to 0$ and therefore $D^2u_0 \to D^2 U_0$ uniformly in $\ov {B_\rho}(z_0)$. This gives $A \to \text{cof} \, \, D^2 U_0$ uniformly
in $\ov {B_\rho}(z_0)$, thus $v_0$ must converge to a solution of the linearized equation for $U_0$.

\

{\it Step 2:} We apply Lemma \ref{lineq} for the function $w$ in Step 1 and obtain
\begin{equation}\label{viq}
\left|v(x) -   (a_0 + a' \cdot x')x_{n}\right| \le \eta + C_{0} \left (|x^{'}|^2 + x_{n}^{2+\alpha}\right)^\frac{3+\alpha}{2+\alpha}
\end{equation}
for some $a_0 \in \R$, $a'= (a_1, \cdots, a_{n-1}) \in \R^{n-1}$, with $|a_0|$, $|a'|$ bounded by a constant depending on $n$ and $\alpha$.

We choose $\eta=C_0 \theta_0^\frac{3+\alpha}{2+\alpha}$ for some small $\theta_0$ to be specified later, and we find
\begin{equation}\label{unu}
\left|u-\left( \eps a_0 x_n + U_0(x'+\ep a'x_n,x_n) \right)\right| \le  \left(2 C_0 \theta_0^\frac{3+\alpha}{2+\alpha} + C \eps  \right)\ep \le 3 C_0 \theta_0^\frac{3+\alpha}{2+\alpha}\ep\quad \quad \mbox{in} \quad S_{\theta_0}(U_0). 
\end{equation}

\
 
{\it Step 3:}
Next we show that, $\nabla u(0)=0$ implies 
 \begin{equation}\label{a0_1}
 |a_0| \le C_1 \theta_0,
 \end{equation}
for some $C_1$ universal, depending only on $n$ and $\alpha$.
 
If, by contradiction, $a_0 > C_1 \theta_0$ then we construct the lower barrier
$$\underbar u:=  U_0(x'+\ep a'x_n,x_n) + \ep \left( - C'\theta_0^\frac{1}{2+\alpha}\sum_{i=1}^{n-1}(x_i+\ep a_i x_n)^2 +C'' \theta_0^\frac{1}{2+\alpha} x_n^{2+\alpha}  + \frac{a_0}{2} x_n\right),$$
with $C'=4C_0$ and $C''$ large such that
$$\det D^2 \underbar u \ge (1+  \theta_0^\frac{1}{2+\alpha }\ep)x_n^{\alpha} \ge (1+\delta_0 \ep) x_n^\alpha \ge \det D^2 u,$$
provided that $\delta_0$ is sufficiently small. We also have $\underbar u \le u$ on 
$\p(S_{\theta_0}(U_0) \cap \Om )$.

Indeed, on $\p S_{\theta_0}(U_0) \cap \Om$, in view of \eqref{unu}, it follows that
 \begin{align*}
  \underbar u  & \le  U_0(x'+\ep a'x_n,x_n) + \ep a_0 x_n - \eps \left( C'\theta_0^\frac{1}{2+\alpha}\sum_{i=1}^{n-1}(x_i+\ep a_i x_n)^2 +\frac{a_0}{4} x_n  \right) \\
  & \le U_0(x'+\ep a'x_n,x_n) + \ep a_0 x_n - \eps \left(4 C_0 \theta_0^\frac{1}{2+\alpha}|x'|^2 + \frac{C_1}{4}\theta_0 x_n + C \eps\right ) \\
  &\le U_0(x'+\ep a'x_n,x_n) + \ep a_0 x_n -3 C_0 \theta_0^\frac{3+\alpha}{2+\alpha}\ep \le u,
  \end{align*}
  and on $\p \Om \cap S_{\theta_0}(U_0)$ we use that $x_n \le \delta_0 \eps |x'|^2$ and $|a_0| \le C$ and obtain
  $$\underbar u \le \left(\frac 12 -  \frac {C'}{2} \theta_0^\frac{1}{2+\alpha} \ep + C \delta_0 \eps \right) |x'|^2  \le  \left(\frac 12 - \delta_0 \eps \right)|x'|^2 \le u.$$

By the maximum principle we obtain that $\underbar u \le u$ in 
$S_{\theta_0}(U_0) \cap \Om $ hence $u_n(0) \ge \underbar u _n (0)>0$ and we reach a contradiction.

If we assume $a_0 <- C_1 \theta_0$ then we consider the upper barrier
$$\bar u:=  \left(\frac 12 +  \eps C' \theta_0^\frac{1}{2+\alpha } \right)|x'+ \ep a'x_n|^2 + 
\frac{1-C''\eps  \theta_0^\frac{1}{2+\alpha }}{(1+\alpha)(2+\alpha)} \left((x_n -\delta_0 \ep)^+\right)^{2+\alpha}  + \frac{\ep a_0}{2} x_n.$$
Then we obtain
$$\det D^2 \bar u \le (1-\delta_0 \ep)  \left((x_n -\delta_0 \ep)^+\right)^\alpha \le \det D^2u,$$
and, similarly as above, we obtain $\bar u \ge u$ on $\p(S_{\theta_0}(U_0) \cap \Om )$. By maximum principle we find  $u  \le \bar u$ in 
$S_{\theta_0}(U_0) \cap \Om $ hence $u_n(0) \le \bar u _n (0)<0$ and again we reach a contradiction. Thus \eqref{a0_1} is proved.
\vglue 0.2cm

{\it Step 4:}  We use \eqref{a0_1} in \eqref{viq} and obtain
$$\left|v(x) -a' \cdot x' \, \, x_{n}\right| \le C_2 \theta_0^\frac{3+\alpha}{2+\alpha}  \quad \mbox{in} \quad S_{2 \theta_0}(U_0).$$
We choose $\theta_0$ small, depending also on $\beta \in (0, \frac{2}{2+\alpha})$, such that
$$C_2 \theta_0^\frac{3+\alpha}{2+\alpha} \le\frac 12 \theta_0^{1+\frac \beta 2},$$
and we easily obtain
$$|u -U_0(x'+\ep a' x_n,x_n)| \le \left(\frac 12 \theta_0^{1+\frac \beta 2} + C \ep \right) \ep \le \theta_0^{1+ \frac \beta 2} \ep\quad \mbox{in} \quad S_{2 \theta_0}(U_0). $$ 
Hence if $\ep\leq \ep_0$ is small,
$$|u(Ax)-U_0(x)| \le \theta_0^{1+\frac \beta 2} \ep\quad \mbox{in} \quad S_{\theta_0}(U_0)$$
where $Ax:=x- \ep (a', 0) x_n$. 
\end{proof}

We are now ready to give the proof of Proposition \ref{c2b}.
\begin{proof}[Proof of Proposition \ref{c2b}]

It is straightforward to check that, since $|x_n|\le \eps |x'|^2$ on $\p \Om$, the distance function $d_{\p \Om}$ satisfies
$$x_n \ge d_{\p \Om}(x) \ge \left(x_n - 2 \ep |x'|^2\right)^+.$$
We will prove the proposition for more general right hand sides in which we replace $d_{\p\Om}$ by the inequality above. Precisely we assume that $u$ satisfies
\begin{equation}\label{gen}
(1-\ep |x|^\gamma)   \left [ \left (x_n - 2\ep  |x'|^2 \right )^+ \right]^\alpha \le \det D^2 u \le (1+\ep |x|^\gamma) \, x_n^\alpha.
\end{equation}
Let $\ep_0,\delta_0,\theta_0$ be as in Lemma \ref{il}. 

We prove by induction that there exists a sequence of slidings $A_k$ such that the rescalings $u_k$ at height $h=\theta_0^k$ 
$$u_k:= \frac 1 h u(A_k F_h x), \quad x\in\tilde\Omega:= (A_k F_h)^{-1}\Om,\quad \quad \quad h=\theta_0^k,  $$
satisfy
\begin{equation}\label{ind}
|u_k-U_0| \le \bar \eps_k:=8 \delta_0^{-1} \eps \theta_0^{k\frac \beta 2}~\text{in}~S_1(U_0)\cap \tilde \Om, \quad \quad |A_k-A_{k-1}| \le C \bar \eps_{k-1}.
\end{equation}
Clearly, this holds for $k=0$ with $A_0=I$ from the hypotheses of the proposition. 

Next we assume it holds for $k$ and prove it for $k+1$. We check that $\tilde u:=u_k$ satisfies the hypotheses of 
Lemma \ref{il} for $$\tilde \eps:=\bar \eps_k \quad \quad \mbox{hence} \quad \delta_0 \tilde \eps=8 \eps h^\frac \beta 2.$$ 
By \eqref{ind}, we have $$|A_k-I| \le C \eps.$$ Note that 
$$\mbox{if} \quad \tilde x \in \overline{ \tilde \Om} \cap S_1(U_0), \quad \mbox{then} \quad \tilde x=F_h^{-1}A_k^{-1}x \quad \mbox{for some}\quad x \in \overline \Om,  $$
and 
$$x_n=h^\frac{1}{2+\alpha} \tilde x_n, \quad \quad |x'-h^\frac 12 \tilde x'| \le C \ep x_n.$$
If $\tilde x \in \p \tilde\Om\cap S_1 (U_0)$, then $x = A_k F_h \tilde x\in \p \Om$, hence $|x_n| \le \ep |x'|^2$, and we easily find
\begin{equation}\label{xbd}
 |x'-h^\frac 12 \tilde x'| \le C \ep^2 |x'|^2 \le \eps h |\tilde x'|,
\end{equation}
and
$$|\tilde x_n|\le 2 \eps h^\frac{1+\alpha}{2+\alpha}| \tilde x'|^2 \le \delta_0 \bar\eps_k |\tilde x'|^2.$$
Moreover, the boundary value of $\tilde u$ is
$$\tilde \ph( \tilde x')=\frac 1 h \ph(x')  $$
hence, using \eqref{xbd}
\begin{align*}
\left|\tilde \ph( \tilde x')-\frac 12 |\tilde x'|^2 \right| & \le 
 \frac 1 h \left| \ph( x')-\frac 12 |x'|^2 \right|+ 2 \ep h^\frac 12 |\tilde x'|^2\\
& \le \frac 1 h \ep |x'|^{2+\beta} + 2 \ep h^\frac 12 |\tilde x'|^2 \\
& \le 4 \ep h^\frac \beta 2  |\tilde x'|^2 \le \delta_0 \bar \ep_k |\tilde x'|^2.
\end{align*}
\noindent
If $\tilde x \in \tilde \Om \cap S_1(U_0)$ then we denote by $x=A_k F_h\tilde x$ and obtain
$$\det D^2 \tilde u (\tilde x)=h^{-\frac{\alpha}{2+\alpha}}\det D^2 u(x).$$
We use \eqref{gen} and $|x| \le 2 h^\frac{1}{2+\alpha}$ and obtain
$$(1-2 \ep h^\frac{\beta}{2})   \left [ \left (\tilde x_n - 8 \ep  h^\frac{1}{2+\alpha} \right )^+ \right]^\alpha \le \det D^2 \tilde u \le (1+ 2 \ep h^\frac{\beta}{2}) \, \tilde x_n^\alpha,$$
or
$$(1- \delta_0 \bar \ep_k)   \left [ \left (\tilde x_n - \delta_0 \bar \ep_k \right )^+ \right]^\alpha \le \det D^2 \tilde u \le (1+ \delta_0 \bar \ep_k) \, \tilde x_n^\alpha.$$
In conclusion we can apply Lemma \ref{il} for $\tilde u$ and obtain a sliding $\tilde A_0$, $|\tilde A_0 -I| \le C \bar \eps_k$ such that
$$\left|\frac{1}{\theta_0}\tilde u(\tilde A_0 F_{\theta_0}x) - U_0 \right| \le \tilde \eps \theta_0^\frac \beta 2 = \bar \ep_{k+1} \quad \mbox{in} \quad S_1(U_0).$$
This means that $u_{k+1}$ satisfies the desired conclusion by choosing $A_{k+1}$ such that
$$A_{k+1}F_{h\theta_0}=A_k F_h \tilde A_0 F_{\theta_0},$$
thus
$$A_{k+1}=A_k \, F_h \tilde A_0 F_h^{-1}.$$
Since $A_{k+1}$, $A_k$, $\tilde A_0$ are slidings, we obtain that the following quantities are comparable up to multiplicative constants
\begin{equation}\label{A}
|A_{k+1}-A_k| \sim |  F_h (\tilde A_0-I) F_h^{-1} | \sim h^{\frac 12 - \frac{1}{2+\alpha}}|\tilde A_0-I|,
\end{equation}
and the induction step is verified.

From \eqref{A} we conclude that $A_k$ converges geometrically to a limiting sliding $A_\infty$, thus
$$|A_\infty-A_k| \le C  h^{\frac 12 - \frac{1}{2+\alpha}} \bar \eps_k,$$ 
hence
$$A_\infty=A_k \, F_h \tilde G_k F_h^{-1} \quad  \mbox{for some sliding $\tilde G_k$ with} \quad |\tilde G_k-I| \le C \bar\eps_k.$$
This means that in \eqref{ind} we may replace the sliding $A_k$ by $A_\infty$ and the right hand side $\bar \eps_k$ by $C \bar \eps_k$,
and the proposition is proved.
\end{proof}

\section{Proof of Proposition \ref{C2b}}
\label{s4}
In this section, we prove Proposition \ref{C2b}.

Assume that $u$ satisfies the hypothesis of Proposition \ref{c2b} with
$$[g]_{C^\gamma} \le \eps \quad \mbox{in} \quad \Om \cap S_1(U_0),$$
and from the conclusion of the proposition there exists a sliding $A$, $|A-I|\le C \ep$ such that $u(Ax)$ is well approximated by $U_0$:
$$|u(Ax)-U_0(x)| \le C \ep U_0(x)^{1+\frac \beta 2}.$$
For each $h<c$ we let $-t$ be the minimum value of $u-h^\frac{1+\alpha}{2+\alpha}x_n$ and $x_t$ the point where is achieved. Thus
 $$S_t(x_t)=\{ u<h^\frac{1+\alpha}{2+\alpha}x_n \}$$
is a section centered at $x_t$ which is tangent to $\p \Om$ at $0$. It is easy to check that $t \sim h$ and the sections $S_h(u)$ and $S_t(x_t)$ are comparable in size. Below we estimate the modulus of continuity of $D^2u$ in the interior of $S_{t}(x_t)$.

\begin{lemma}\label{L_1}
 Assume $\beta \le \alpha$. In the section $S_{t/4}(x_t)$ we have
 $$ \|D^2 u\|_{C^\beta} \le C, \quad \| D^2u-D^2u(0)-x_n^\alpha e_n \otimes e_n\|_{L^\infty}\le C \eps h^\frac \beta 2.$$
\end{lemma}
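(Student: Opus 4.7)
The plan is to localize the problem at scale $h\sim t$ via the rescaling $u_h(y):=h^{-1}u(AF_hy)$, where $A$ is the limiting sliding produced by Proposition~\ref{c2b}. By the iteration of Lemma~\ref{il} underlying that proposition, one has $|u_h-U_0|\le C\eps h^{\beta/2}$ on $S_1(U_0)\cap\tilde\Om$, and the section $S_{t/4}(x_t)$ pulls back to a fixed interior region $\tilde S$ centered near $e_n$ and contained in $\{y_n\ge c\}$ for a universal $c>0$. In $\tilde S$ the rescaled equation reads $\det D^2u_h=\tilde g_h\tilde d^\alpha$ with $\tilde g_h$ close to $1$ (the rescaled $C^\gamma$ seminorm shrinks like $h^{\gamma/(2+\alpha)}=h^{\beta/2}$ by the choice $\gamma=\beta(2+\alpha)/2$) and $\tilde d\sim y_n$ uniformly bounded from below; the hypothesis $\beta\le\alpha$ ensures the right hand side is bounded in $C^\beta$.

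I would then apply Caffarelli's interior $C^{2,\beta}$ Schauder estimates for Monge--Amp\`ere to obtain $\|D^2u_h\|_{C^\beta(\tilde S')}\le C$ on a slightly smaller interior region $\tilde S'$. The quantitative closeness $|u_h-U_0|\le C\eps h^{\beta/2}$, together with standard Schauder perturbation theory applied to $v:=u_h-U_0$ (which solves a uniformly elliptic linear equation in $\tilde S$ with data of order $\eps h^{\beta/2}$), then yields $\|D^2u_h-D^2U_0\|_{L^\infty(\tilde S')}\le C\eps h^{\beta/2}$.

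Both estimates are then transferred back to $u$ via the matrix identity
\[
D^2u(x)=hA^{-T}F_h^{-1}D^2u_h(y)F_h^{-1}A^{-1},\qquad y=F_h^{-1}A^{-1}x.
\]
Because $\tau\cdot e_n=0$ one has $A^{-T}e_n=e_n$ and $e_n^TA^{-1}=e_n^T$, hence $A^{-T}(x_n^\alpha e_ne_n^T)A^{-1}=x_n^\alpha e_ne_n^T$. Setting $D^2u(0):=A^{-T}(I'\oplus 0)A^{-1}$ (the boundary limit of $D^2u$ provided by the quadratic expansion of Proposition~\ref{c2b}), the identity rewrites as
\[
D^2u(x)-D^2u(0)-x_n^\alpha\,e_n\otimes e_n=hA^{-T}F_h^{-1}\bigl[D^2u_h(y)-D^2U_0(y)\bigr]F_h^{-1}A^{-1}.
\]
A block-wise inspection of $hF_h^{-1}(\cdot)F_h^{-1}$ shows that the $x'x'$ entries are preserved while the mixed and $(n,n)$ entries are dampened by the factors $h^{\alpha/(2(2+\alpha))}$ and $h^{\alpha/(2+\alpha)}$, both $\le 1$. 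Combined with the $L^\infty$ bound of the previous step this yields the second estimate, and the $C^\beta$ estimate follows in the same manner from $\|D^2u_h\|_{C^\beta(\tilde S')}\le C$.

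The main obstacle will be the careful bookkeeping in this transfer step, particularly for the Euclidean $C^\beta$ bound on $D^2u$: the amplification of Euclidean distances by $F_h^{-1}$ in the $x'$ direction (by $h^{-1/2}$) must be reconciled with the dampening of the Hessian entries by $hF_h^{-1}(\cdot)F_h^{-1}$, and the restrictions $\beta\le\alpha$ and $\beta<2/(2+\alpha)$ enter precisely to make this cancellation work out uniformly in $h$.
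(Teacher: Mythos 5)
Your plan follows the same route as the paper: rescale via $u_h=h^{-1}u(AF_hx)$, use the closeness $|u_h-U_0|\le C\eps h^{\beta/2}$ from Proposition~\ref{c2b}, apply interior Schauder estimates, and transfer back through the matrix identity. Your block analysis of $hF_h^{-1}(\cdot)F_h^{-1}$ and the observation that a sliding fixes $e_n\otimes e_n$ are both correct, and so is the identification of $D^2u(0)$ with $A^{-T}(I'\oplus 0)A^{-1}$.

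However, your derivation of the first estimate $\|D^2u\|_{C^\beta}\le C$ has a genuine gap. You propose to get it ``from $\|D^2u_h\|_{C^\beta(\tilde S')}\le C$'' and hope that the dampening by $hF_h^{-1}(\cdot)F_h^{-1}$ compensates the $h^{-1/2}$ stretching of Euclidean distances in $x'$. But in the critical $x'x'$ block the factor in $hF_h^{-1}(\cdot)F_h^{-1}$ is exactly $1$ (no dampening), while $|y_1-y_2|\sim h^{-1/2}|x_1-x_2|$; so the raw bound $[D^2u_h]_{C^\beta}\le C$ transfers to $[D^2u]_{C^\beta}\le Ch^{-\beta/2}$, which blows up as $h\to 0$. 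The cancellation you are hoping for does not come from the Hessian dampening: it comes from the \emph{smallness} $h^{\beta/2}$ in the Schauder perturbation estimate, and you must carry that factor through the H\"older seminorm, not just the $L^\infty$ bound. This is exactly what the paper does: using that $\gamma>\beta$, so the rescaled data $\bar g_h$ has $C^\gamma$ seminorm $\le C\eps h^{\gamma/(2+\alpha)}=C\eps h^{\beta/2}$, the perturbation argument gives $\|D^2(u_h-U_0)\|_{C^\beta}\le C\eps h^{\beta/2}$ in the interior region. Transferring \emph{this} back, the $x'x'$ part of $D^2(u-U_0(A^{-1}\cdot))$ has $[\cdot]_{C^\beta}\le C\eps h^{\beta/2}\cdot h^{-\beta/2}=C\eps$, and the remaining piece $D^2(U_0(A^{-1}x))=D^2u(0)+x_n^\alpha e_n\otimes e_n$ is $C^\beta$ by the hypothesis $\beta\le\alpha$. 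So the role of $\beta\le\alpha$ is precisely to make $x_n^\alpha$ a $C^\beta$ function, not to repair the transfer step; the transfer is repaired by the $h^{\beta/2}$ gain in the H\"older seminorm. You should replace your two-step argument (raw $C^\beta$ bound on $D^2u_h$, separate $L^\infty$ perturbation bound) with a single perturbative Schauder estimate for $D^2(u_h-U_0)$ in $C^\beta$ that carries the smallness factor.
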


\begin{proof}
 Using that $u(Ax)$ is well approximated by $U_0(x)$ we obtain 
 $$S_{t/2}(x_t) \subset \mathcal C:= \left \{|x'| \le  |x_n|   \right \}.$$
 Since $\p \Om \subset \{|x_n| \le \eps |x'|^2\}$ it follows that in the cone $\mathcal C$, $\frac{d_{\p \Om}}{x_n}$ is a positive function with gradient bounded by $1+ C \eps$, and in fact
$$\left \| \frac{d_{\p \Om}}{x_n}-1 \right \|_{C^{0,1}(\mathcal C)} \le C \eps.$$
This means that in the cone $\mathcal C$ (and therefore in $S_{t/2}(x_t)$) we can rewrite the equation for $u$ as
 $$\det D^2 u = \bar g \, \, x_n^\alpha, \quad \quad \|\bar g -1 \|_{C^\gamma}\le C \eps \quad \mbox{in $\mathcal C$}.$$
 We let $u_h$ denote the rescaled function 
$$u_h(x)=\frac 1 h u(A F_h x),$$
 and let $y_t$ such that $ x_t=A F_h y_t$. Notice that
 $$ A F_h  S_{t/(2h)}(y_t) = S_{t/2}(x_t), \quad \quad S_{t/h}(y_t)=\{u_h <y_n\}$$
where $S_t(y)$ denote the sections for $u_h$.

We have $$\det D^2 u_h= \bar g_h \, \, x_n^\alpha \quad \mbox{in}\quad S_{t/(2h)}(y_t),$$
with $$\bar g_h(x)=\bar g (AF_h x) \quad \quad \Rightarrow \quad \quad \|\bar g_h-1\|_{C^\gamma} \le  C \eps h^\frac {\gamma}{2+\alpha}=C\eps h^\frac \beta 2.$$
From Proposition \ref{c2b} we have $$|u_h -U_0| \le C \eps h^\frac \beta 2 \quad \mbox{in} \quad  S_{t/(2h)}(y_t).$$
Notice that $\gamma > \beta$ hence, by the interior $C^{2,\beta}$ estimates for the Monge-Amp\`ere equation, we obtain
$$\|D^2 (u_h- U_0) \|_{C^\beta} \le C \eps h^\frac \beta 2 \quad \quad \mbox{in} \quad S_{t/(4h)}(y_t).$$
Rescaling back, we write these inequalities in terms of $D^2u$ and obtain that in $S_{t/4}(x_t)$
$$\|D^2 (u-U_0(A^{-1}x))\|_{L^\infty} \le C \eps h^\frac \beta 2, \quad \quad  \|D^2 (u-U_0(A^{-1}x))\|_{C^\beta} \le C \eps.$$
We can write $$D^2(U_0(A^{-1}x))=Q_0 + x_n^\alpha e_n \otimes e_n \quad \quad \mbox{ and} \quad  Q_0=D^2u(0).$$
Now we use that $\beta \le \alpha$ hence $x_n^\alpha \in C^\beta$ and the proof is completed.
\end{proof}

Next we check that Proposition \ref{c2b} can be applied at the origin also when $u(0)$ and $\nabla u (0)$ do not necessarily vanish provided that  $\p \Om$ is $C^{2,\beta}$. Thus the proposition can be used at all other boundary points $z \in \p \Om \cap S_{1/2}(U_0)$.
\begin{lemma}\label{L_2}
Assume that $u$ solves $$\det D^2 u=g \, d^\alpha_{\p \Om} \quad \mbox{in} \quad \Om \cap S_1(U_0),$$
and
$$|u-U_0| \le \eps, \quad [g]_{C^\gamma}\le \eps, \quad u(x)=\varphi(x')~\text{on}~\p\Om, \quad [D^2 \ph]_{C^\beta} \le \eps, $$
$$\p \Om= \{x_n=q(x')\} \quad {with} \quad  \|q\|_{C^{2, \beta}}\le \eps.$$
Then there exist 2 linear transformations, $A$ sliding, $D$ symmetric, which are $C \eps$ perturbations of the identity $I$ such that
$$|(u-l_0)(D A x)-U_0(x)| \le C \eps U_0^{1+\frac \beta 2},$$
where $l_0(x)=u(0)+\nabla u(0) \cdot x$ represents the linear part of $u$ at $0$.
\end{lemma}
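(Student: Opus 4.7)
The strategy is to reduce Lemma~\ref{L_2} to Proposition~\ref{c2b} via a symmetric linear change of variables $D$ that both straightens the boundary to first order and normalizes the quadratic part of the boundary data at $0$; the sliding $A$ in the statement then arises by applying Proposition~\ref{c2b} to the transformed function. Set $\bar u := u - l_0$, so that $\bar u(0)=0$ and $\nabla \bar u(0)=0$. Combining the $C^\beta$ bound on $D^2\varphi$ with the $L^\infty$ bound $|u - U_0|\le\eps$, standard interpolation applied to $\psi := \varphi - \frac 12|x'|^2$ and to $q$ yields
\[
|\nabla\varphi(0)| + |D^2\varphi(0)-I'| + |\nabla q(0)| + |D^2 q(0)| \le C\eps,
\]
and the boundary $C^2$ estimates of \cite{S2} together with $|u-U_0|\le\eps$ give a small bound on $|u_n(0)|$. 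A Taylor expansion of the boundary trace $\bar\varphi(x') := \varphi(x') - l_0(x',q(x'))$, using the chain rule to combine these pieces, then gives
\[
\bar\varphi(x') = \tfrac 12\, x' \cdot M x' + O(\eps|x'|^{2+\beta}), \qquad M := D^2\varphi(0) - u_n(0)\, D^2 q(0), \qquad |M-I'|\le C\eps.
\]

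Next I construct the symmetric matrix
\[
D := \begin{pmatrix} S & w \\ w^\top & 1 \end{pmatrix},\qquad S := M^{-1/2},\qquad w := S\,\nabla q(0),
\]
so that $|D-I|\le C\eps$, and set $v(y) := \bar u(Dy)$ on $\tilde\Om := D^{-1}\Om$. An implicit function argument applied to the boundary equation $(Dy)_n = q((Dy)')$ at $0$ shows that $\p\tilde\Om = \{y_n = q^*(y')\}$ with $q^*(0)=0$, $\nabla q^*(0) = S\,\nabla q(0) - w = 0$, and $\|q^*\|_{C^{2,\beta}}\le C\eps$; hence $|y_n|\le C\eps|y'|^2$ on $\p\tilde\Om$. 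The new boundary trace equals $\tilde\varphi(y') = \bar\varphi(Sy' + w\, q^*(y'))$, and the identity $SMS = I'$ forces its quadratic part to be exactly $\frac 12|y'|^2$; the cross terms involving $w\, q^*$ are at least cubic in $|y'|$ and are absorbed into $O(\eps|y'|^{2+\beta})$.

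It remains to verify the remaining hypotheses of Proposition~\ref{c2b} for $v$. The pointwise bound $|v-U_0|\le C\eps$ on $S_1(U_0)\cap \tilde\Om$ follows from $|u-U_0|\le\eps$, $|l_0|\le C\eps$ on $S_1(U_0)$, and $|D-I|\le C\eps$ combined with the smoothness of $U_0$. The transformed equation reads $\det D^2 v = \tilde g\, d_{\p\tilde\Om}^\alpha$ with $|\tilde g-1|\le C\eps|y|^\gamma$, after absorbing the constant Jacobian $\det D = 1 + O(\eps)$ and noting, as in Lemma~\ref{L_1}, that $d_{\p\tilde\Om}/y_n = 1 + O(\eps)$ in $C^{0,1}$ on $S_1(U_0)\cap\tilde\Om$. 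Proposition~\ref{c2b} then produces a sliding $A$ with $|A-I|\le C\eps$ and
\[
|v(Ay) - U_0(y)| \le C\eps\, U_0(y)^{1+\beta/2},
\]
which rewrites as $|(u - l_0)(DAy) - U_0(y)| \le C\eps\, U_0^{1+\beta/2}$.

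The main technical obstacle is the simultaneous choice of $S$ and $w$ in the construction of $D$. The off-diagonal block $w$ is essential for flattening the linear tilt of $\p\Om$ at $0$, since a sliding alone cannot remove $\nabla q(0)$; and the symmetric block $S = M^{-1/2}$ is what normalizes the quadratic part of the boundary data. One must carefully track how the coupling between $y'$ and $y_n$ introduced by $w$ perturbs the boundary trace, the distance function, and the right-hand side of the Monge--Amp\`ere equation, and verify that all of these remain within the smallness regime demanded by Proposition~\ref{c2b}. Once this bookkeeping is organized, the reduction itself is mechanical.
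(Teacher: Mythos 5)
Your overall strategy mirrors the paper's: subtract the linear part $l_0$, apply a symmetric linear map $D$ to normalize the boundary Hessian, then invoke Proposition~\ref{c2b} to obtain the sliding $A$. However, there is a genuine gap in the normalization step. You fix the $(n,n)$-entry of $D$ equal to $1$, so after the change of variables the transformed equation reads $\det D^2 v = \tilde g\,d^\alpha_{\p\tilde\Om}$ with
$$\tilde g(0) = (\det D)^2\, g(0)\, \Big(\lim_{y\to 0}\frac{d_{\p\Om}(Dy)}{d_{\p\tilde\Om}(y)}\Big)^{\alpha} = 1 + O(\eps),$$
but not $\tilde g(0)=1$ exactly. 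Proposition~\ref{c2b} requires $|g-1|\le\eps|x|^\gamma$, which forces $g(0)=1$; a deviation of order $\eps$ at the origin is not recoverable from $[g]_{C^\gamma}\le\eps$, so the hypothesis $|\tilde g-1|\le C\eps|y|^\gamma$ that you assert does not actually hold. The paper avoids this by choosing $D$ with $e_n$ as an eigenvector and taking its normal eigenvalue $d\neq 1$ as a free parameter: after normalizing the tangential Hessian via $S$, one then adjusts $d$ (roughly $d=(\det M / g(0))^{1/(2+\alpha)}$, a $1+O(\eps)$ quantity) so that $g_{\tilde w}(0)=1$. This second normalization is not cosmetic; without it Proposition~\ref{c2b} does not apply.

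A secondary point: you also insert an off-diagonal block $w=S\nabla q(0)$ into $D$ to remove a supposed linear tilt of $\p\Om$ at $0$. In the setting of Proposition~\ref{C2b} in which Lemma~\ref{L_2} is used, the ambient convention is $q(0)=0$, $\nabla q(0)=0$ (one rotates to the inner normal at each boundary point before recentering), so the paper simply takes $D$ block-diagonal with $e_n$ an eigenvector and never confronts a tilt. Your $w$ is harmless here (it vanishes when $\nabla q(0)=0$) but it adds bookkeeping without buying anything in the intended regime, and it also obscures the missing degree of freedom in the normal direction: you spent it on $w$ and fixed $d=1$, whereas the paper keeps it precisely to normalize $g(0)$. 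Restoring the free normal eigenvalue $d$ would repair the argument.
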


\begin{proof}
We sketch the proof below.

Since $|u-U_0| \le \eps$ we easily find $$|g(0)-1|\le C \eps, \quad |D_{x'}^2 \ph(0)-I' |\le C \eps, \quad |\nabla_{x'}\ph(0)| \le C \eps.$$
Using standard barriers at the origin we find $|u_n(0)| \le C \eps$ hence the linear part $l_0$ of $u$ at the origin satisfies $$l_0(x)=u(0)+\nabla_{x'}\ph(0) x'+u_n(0)x_n = O(\eps).$$ 
The function $w=u-l_0$ satisfies the same equation as $u$ and $w(0)=0$, $\nabla w(0)=0$ and its boundary values on $\p \Om$ equal
$$\ph_w(x')=\ph(x')-l_0(x',q(x')) \quad \mbox{hence} \quad \|D_{x'}^2 \ph_w-I'\|_{C^{2,\beta}} \le C \eps.$$
Now we may choose a symmetric matrix $D$ which has $e_n$ as an eigenvector such that 
$$\tilde w(x)=w(D x), \quad x\in \tilde \Om:= D^{-1}\Om$$
satisfies the hypotheses of Proposition \ref{c2b} for its boundary data $\ph_{\tilde w}$ and right hand side $g_{\tilde w}d_{\p\tilde\Om}^{\alpha}$ with
$$D^2_{x'}\ph_{\tilde w}(0)=I', \quad \quad g_{\tilde w}(0)=1.$$

In conclusion there exists a sliding $A$ which is a $C\ep$ perturbation of the identity $I$ such that $|\tilde w(A x)-U_0(x)| \le C \eps U_0(x)^{1+\frac \beta 2}$.
\end{proof}

We write $U_0(A^{-1}D^{-1}x)$ as a quadratic polynomial plus a multiple of $x^{2+\alpha}$. 

Since $U_0(A^{-1}D^{-1}x) \le C |x|^2$, the conclusion in the lemma above gives 
\begin{equation}\label{u-l}
\left|u-\left (l_0 + \frac 12 x^TD^2u(0)x + \frac{a(0)}{(1+\alpha)(2+\alpha)}x_n^{2+\alpha}\right)\right | \le C \eps |x|^{2+\beta}.
\end{equation}
The coefficient $a(0)$ and the tangential determinant of $D^2_{x'}u$ satisfy
\begin{equation}\label{a0}
a(0) \det D_{x'}^2u(0)=g(0).
\end{equation}
Clearly the conclusion holds for all points $z \in \p \Om \cap S_{1/2}(U_0)$ i.e.
$$\left|u-\left (l_z + \frac 12 (x-z)^TD^2u(z)(x-z) + \frac{a(z)}{(1+\alpha)(2+\alpha)}((x-z)\cdot \nu_z)^{2+\alpha}\right)\right | \le C \eps |x-z|^{2+\beta}.$$
Next we show that $D^2u$ and $a$ are $C^\beta$ on $\p \Om \cap S_{1/2}(U_0)$.
\begin{lemma}\label{L_3} On $\p \Om \cap S_{1/2}(U_0)$, we have
$$[D^2u]_{C^\beta} \le C \eps, \quad \quad [a]_{C^\beta} \le C \eps.$$
\end{lemma}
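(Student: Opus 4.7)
The plan is to compare, at two nearby boundary points $z_1, z_2 \in \p\Om \cap S_{1/2}(U_0)$, the pointwise $C^{2,\beta}$ expansions furnished by Lemma \ref{L_2} and to extract coefficient bounds from the resulting polynomial identity. Set $r := |z_1 - z_2|$ small, and denote by $P_{z_i}$ the quadratic-plus-degenerate polynomial appearing on the right-hand side of the expansion at $z_i$. Applying the bound $|u - P_{z_i}| \le C\eps |x-z_i|^{2+\beta}$ at both $i = 1, 2$ and subtracting yields
$$|P_{z_1}(x) - P_{z_2}(x)| \le C\eps r^{2+\beta}$$
for every $x \in \ov\Om$ with $|x - z_1| \le 2r$ (hence also $|x - z_2| \le 3r$).

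To extract the tangential Hessian bound, I would restrict $x$ to $\p\Om$ in the above inequality. Since $\p\Om \in C^{2,\beta}$ with seminorm bounded by $\eps$, the signed normal distance from $x \in \p\Om$ near $z_i$ to the tangent plane at $z_i$ is $O(\eps r^2)$, so each degenerate term $\frac{a(z_i)}{(1+\alpha)(2+\alpha)}((x-z_i)\cdot \nu_{z_i})^{2+\alpha}$ is of order $\eps^{2+\alpha}r^{4+2\alpha}$, negligible compared with $\eps r^{2+\beta}$. What remains is a quadratic polynomial difference in the tangential variables, bounded by $C\eps r^{2+\beta}$ on a nearly-flat $(n-1)$-dimensional patch of $\p\Om$ of diameter comparable to $r$; standard polynomial coefficient extraction on this patch then yields $|D^2_{x'}u(z_1) - D^2_{x'}u(z_2)| \le C\eps r^\beta$. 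The $\nu_z$-$\nu_z$ entry of $D^2 u(z)$ vanishes identically on $\p\Om$ (being the Hessian of the degenerate $(2+\alpha)$-power in its own direction), so is trivially $C^\beta$, while the remaining tangent-normal mixed entries can be recovered by evaluating the polynomial identity at points $x = z_1 + t\nu_{z_1}$ with $t \in (0,r]$ and peeling off the already-controlled tangential and $(2+\alpha)$-degenerate contributions.

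For $[a]_{C^\beta}$, I would exploit the algebraic identity \eqref{a0}, $a(z)\det D^2_{x'}u(z) = g(z)$, which is valid at every $z \in \p\Om \cap S_{1/2}(U_0)$. Since $g \in C^\gamma$ with $\gamma \ge \beta$ on the bounded set $S_1(U_0)$ gives $[g]_{C^\beta} \le C\eps$, and the tangential Hessian $D^2_{x'}u$ is $C^\beta$ with seminorm $\le C\eps$ by the previous step and remains close to $I'$ (so $\det D^2_{x'}u$ is bounded below by a universal constant), the ratio $a = g/\det D^2_{x'}u$ satisfies $[a]_{C^\beta} \le C\eps$ by direct algebraic manipulation. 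The main technical obstacle is handling the $z$-dependence of the unit normal $\nu_z$: since $|\nu_{z_1} - \nu_{z_2}| \le C\eps r$, the two degenerate terms in the polynomial identity do not cleanly cancel but introduce cross-terms mixing tangential and normal scales, which must be carefully absorbed into the $O(\eps r^{2+\beta})$ error budget before polynomial coefficient extraction can be rigorously applied to the mixed components of the Hessian.
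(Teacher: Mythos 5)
Your proposal correctly handles the tangential Hessian (restriction to $\p\Om$) and the deduction of $[a]_{C^\beta}$ from \eqref{a0}, but it has a genuine gap in extracting the tangent-normal mixed entries of $D^2u$, and this stems from missing the key trick in the paper's proof. The paper replaces the $z$-dependent degenerate term $\frac{a(z)}{(1+\alpha)(2+\alpha)}((x-z)\cdot\nu_z)^{2+\alpha}$ by the \emph{base-point-independent} quantity $\frac{1}{(1+\alpha)(2+\alpha)}d_{\p\Om}(x)^{2+\alpha}$: using $|a(z)-1|\le C\eps$ from Lemma \ref{L_2}, $\beta\le\alpha$, and the estimate $\big|((x-z)\cdot\nu_z)^{2+\alpha}-d_{\p\Om}^{2+\alpha}\big|\le C\eps|x-z|^{3+\alpha}$, the error of this substitution is $\le C\eps|x-z|^{2+\beta}$. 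After the substitution, subtracting the two expansions at $0$ and $z$ makes the degenerate terms cancel identically, leaving a purely quadratic inequality on the full-dimensional region $B_{|z|}\cap\Om$, from which all the entries of $D^2u(0)-D^2u(z)$ are read off in one stroke.

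Without this trick your plan hits two concrete obstacles. First, the mixed-entry extraction from points $x=z_1+t\nu_{z_1}$ cannot work: along that ray, $\frac12(x-z_1)^TD^2u(z_1)(x-z_1)$ produces no tangent-normal cross terms, so there is nothing to pair against the mixed contribution coming from $P_{z_2}$; you would have to vary $x$ over a genuinely full-dimensional patch. Second, the difference of the two degenerate terms contains a contribution of size $|a(z_1)-a(z_2)|\,r^{2+\alpha}$, which is absorbable into the $O(\eps r^{2+\beta})$ budget only once the $a$-bound is already in hand, yet your proposal schedules the $a$-bound last (and refers to the degenerate terms as ``already controlled'' before establishing it). The argument can be repaired by reordering --- tangential Hessian, then $a$ via \eqref{a0}, then the full Hessian from a full-dimensional patch once the degenerate-term difference is controlled --- but the $d_{\p\Om}^{2+\alpha}$ substitution sidesteps all of this.
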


\begin{proof}
Since
$$d_{\p \Om} \le x_n \le d_{\p \Om}+ C \eps |x|^2,$$
it follows that
$$|x_n^{2+\alpha}-d^{2+\alpha}_{\p \Om}| \le C \ep |x|^{2}d^{1+\alpha} + C (\ep |x|^2)^{2+\alpha} \le C \ep |x|^{3+\alpha}.$$
Also, from the previous lemma we have $|a(0)-1| \le C \eps$. Using these in \eqref{u-l} together with $\beta \le \alpha$ and we find
$$\left|u-\left (l_0 + \frac 12 x^TD^2u(0)x + \frac{1}{(1+\alpha)(2+\alpha)}d_{\p \Om}^{2+\alpha}\right)\right | \le C \eps |x|^{2+\beta}.$$
Subtracting the corresponding inequality at $z$ we find
$$\left|l_0 -l_z + \frac 12 x^TD^2u(0)x - \frac 12 (x-z)^TD^2u(z)(x-z)\right | \le C \ep |z|^{2+\beta} \quad \quad \forall x \in B_{|z|}\cap \Om$$
which gives 
$$|D^2u(0)-D^2u(z)| \le C \ep |z|^\beta.$$
Now the corresponding inequality for $a(z)$ follows easily from the equality \eqref{a0} and the fact that $[g]_{C^\beta} \le C \eps$. 
\end{proof}

\
We now  give the proof of Proposition \ref{C2b}.
\begin{proof}
[Proof of Proposition \ref{C2b}]

Let $x_0$ and $x_1$ be two points in $\Om\cap S_{1/4}(U_0)$ and let $S_{t_0}(x_0)$, $S_{t_1}(x_1)$ be the the maximal sections included in $\Om$ which become tangent to $\p \Om$ at, say $0$ for simplicity, and respectively $z$. 
Let $d_0:=|x_0|$, $d_1:=|x_1-z|$ and assume $d_0 \ge d_1$. Next we show that
$$|D^2u(x_1)-D^2(x_0)| \le C |x_1-x_0|^\beta.$$
 We distinguish 3 cases depending on the positions of $x_0$, $x_1$ and $z$.

\

{\it Case 1:} $x_1 \in S_{t_0/4}(x_0)$. Then we're done by Lemma \ref{L_1}.

\

{\it Case 2:} $x_1 \notin S_{t_0/4}(x_0)$ and $|z|^2 \le c \, d_0^{2+\alpha}$.

Then 
$|x_0-x_1| \ge c d_0$. 
This inequality follows by inspecting the corresponding sections for the rescaling $u_h$ with $h=d_0^{2+\alpha}$ and noticing that their centers $y_0$ and $y_1$ must 
satisfy $(y_0-y_1)\cdot e_n \ge c|y_0|.$ We rescale back and obtain 
$|x_0-x_1| \ge c d_0.$

Moreover by Lemma \ref{L_1} and $\alpha\geq \beta$ we have
$$|D^2u(x_0)-D^2u(0)| \le C (x_0 \cdot e_n)^\alpha + C \ep d_0^{(2+\alpha)\frac \beta 2} \le C d_0^\beta,$$
and similarly
$$|D^2u(x_1)-D^2u(z)| \le C d_1^\beta.$$
Now we obtain the desired inequality since by Lemma \ref{L_3}, $|D^2 u(z)-D^2u(0)| \le C \ep z ^\beta \le C d_0^\beta$.

\

{\it Case 3:}  $|z|^2 \ge c\, d_0^{2+\alpha}.$

Then $|x_0-x_1| \ge c |z|$. This follows by inspecting the images of the sections under the rescaling $u_h$ with $h=|z|^2$ and noticing 
that their centers $y_0$ and $y_1$ must satisfy $|y_0-y_1| \ge c$.

We use Lemma \ref{L_1} at $0$ and $z$ and obtain
$$D^2 u(x_0)=D^2u(0) + a(0) \, (x_0 \cdot e_n)^\alpha \, e_n \otimes e_n + O(\ep |z|^\beta),$$
$$D^2 u(x_1)=D^2u(z) + a(z)\, ((x_1-z) \cdot \nu_z)^\alpha \, \nu_z \otimes \nu_z + O(\ep |z|^\beta).  $$
 Now the the desired result is a consequence of Lemma \ref{L_3}, $|x_0-x_1| \ge c |z|$, and
 $$|e_n-\nu_z| \le C \ep |z|,$$  $$ |x_0 \cdot e_n-(x_1-z) \cdot \nu_z| \le C |x_0-x_1| + C|z| \le C |x_0-x_1|.$$

\end{proof}

\section{Global $C^{\infty}$ regularity of Monge-Amp\`ere eigenfunctions}
\label{s5}
In this section, we prove global $C^{\infty}$ regularity of Monge-Amp\`ere eigenfunctions stated in Theorem \ref{T035}. For convenience, we restate it here. 
\begin{theorem}\label{T04} Let $\Om$ be a bounded and uniformly convex domain in $\R^n$.
Assume $\p \Om \in C^{\infty}$ and $u$ satisfies  
\begin{equation}\label{EP1}
\left\{
 \begin{alignedat}{2}
   (\det D^2 u)^{\frac{1}{n}} ~& = \lambda|u|~&&\text{in} ~  \Omega, \\\
u &= 0~&&\text{on}~ \p\Omega.
 \end{alignedat} 
  \right.
  \end{equation}
 Then $u \in C^{\infty}(\ov \Om)$.
\end{theorem}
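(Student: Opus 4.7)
The strategy, as outlined in the introduction, is to bootstrap the $C^{2,\beta}$ regularity from Theorem \ref{T03} up to $C^\infty$ through two successive changes of variable---a Hodograph transform that flattens the boundary and a partial Legendre transform in the tangential directions---which together recast \eqref{EP1} as a quasilinear equation whose linearization is a degenerate Grushin-type operator. Since $u \in C^{1,1}(\ov \Om)$ vanishes on $\p\Om$ and $\Om$ is uniformly convex, \eqref{EP1} falls into the class covered by \eqref{02} with $\alpha = n$, and Theorem \ref{T03} already furnishes the base regularity $u \in C^{2,\beta}(\ov \Om)$ for every $\beta < \tfrac{2}{2+n}$.

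I would localize at an arbitrary boundary point, which after rotation and translation may be taken to be the origin with $\p \Om$ tangent to $\{x_n=0\}$ and locally given by $x_n=q(x')$ with $q \in C^\infty$. The Hodograph substitution $(x', x_n)\mapsto (x', y_n):=(x', x_n - q(x'))$ smoothly flattens $\p\Om$ to $\{y_n=0\}$; set $\tilde u(x', y_n) := u(x', y_n + q(x'))$. Since $D^2_{x'}\tilde u(0,0)$ is positive definite (by quadratic separation), the partial Legendre transform
\[
u^*(\xi', y_n) \;:=\; x'\cdot \xi' \;-\; \tilde u(x', y_n), \qquad \xi' \;=\; D_{x'}\tilde u(x', y_n),
\]
is a $C^{1,\beta}$ diffeomorphism near the origin. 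A direct computation rewrites $\det D^2 u = \lambda^n|u|^n$ as a quasilinear equation
\[
\p_{y_n y_n} u^* \;=\; F\bigl(\xi', y_n,\; u^*,\; D_{\xi'} u^*,\; D^2_{\xi'} u^*,\; \p_{y_n} D_{\xi'} u^*\bigr),
\]
with $F$ smooth so long as $D^2_{\xi'} u^*$ stays positive definite. The model solution $U_0$ transforms into $U_0^*(\xi', y_n) = \tfrac12 |\xi'|^2 + c\,y_n^{2+n}$, and linearizing $F$ at $U_0^*$ yields the Grushin-type operator
\[
L w \;=\; \p_{y_n y_n} w \;+\; y_n^n \,\Delta_{\xi'} w \;+\; \text{lower-order terms.}
\]

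From here I would run a standard bootstrap using Schauder estimates for $L$ in H\"older spaces adapted to the anisotropic metric $\rho(y) = (|\xi'|^2 + y_n^{2+n})^{1/2}$; the validity of such estimates up to $\{y_n=0\}$ reflects precisely the kind of pointwise expansion proved in Theorem \ref{T01}. The base step, that $u^*$ is $C^{2,\beta}$ in the Grushin metric, follows from Theorems \ref{T01}--\ref{T02} together with the smoothness of the Hodograph map. Inductively, if $u^*$ is Grushin-$C^{k,\beta}$, then the coefficients of $L$ (all smooth functions of $u^*$ and its Grushin-derivatives of order at most two) lie in Grushin-$C^{k-2,\beta}$, so applying the Grushin Schauder estimate to successive first-order Grushin derivatives of the quasilinear equation upgrades $u^*$ to Grushin-$C^{k+1,\beta}$. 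Iterating yields $u^* \in C^\infty$ up to $\{y_n=0\}$; inverting the two now-smooth transforms gives $u \in C^\infty(\ov\Om)$ near the chosen boundary point. Compactness of $\p\Om$ and interior smoothness due to Lions \cite{L} then yield the global result.

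The main obstacle is the degenerate character of $L$: ordinary uniformly elliptic Schauder does not apply, and one must set up a Grushin-intrinsic Schauder theory in which the factor $y_n^n$ interacts cleanly with the anisotropic scaling $\rho$, and in which products and compositions arising from differentiating $F$ remain in the correct weighted class. Verifying this requires unpacking how the polynomial factor $|u|^n$ in \eqref{EP1} becomes a smooth positive function times $y_n^n$ after the two transforms, and using the fact that Grushin-$C^{k,\beta}$ regularity of $u^*$ translates back, via the pointwise expansion of Theorem \ref{T01}, to the correct classical boundary behavior of $u$.
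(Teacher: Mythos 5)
Your overall scaffolding is correct in outline---base $C^{2,\beta}$ regularity from Theorem \ref{T03}, two transforms reducing \eqref{EP1} to a quasilinear Grushin-type equation in a half-ball, Schauder bootstrap---but there are two genuine gaps.

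First, what you call the ``Hodograph substitution'' is not the hodograph transform used in the paper; it is merely a boundary-flattening change of variables $(x',x_n)\mapsto(x',x_n-q(x'))$. That map does \emph{not} convert the factor $|u|^n$ into $y_n^n$ times a smooth function of derivatives of the unknown. After your flattening one still has $|\tilde u|^n$ on the right-hand side, and the ratio $|\tilde u|/y_n$ depends on $\tilde u$ itself, so the degeneracy is only implicitly of the form $y_n^n$ and the bootstrap does not close cleanly; you flag exactly this as ``the main obstacle'' without resolving it. The actual hodograph transform swaps the coordinate $x_n$ with the graph (value) variable: after a rotation, the surface $\{x_{n+1}=u(x)\}$ is re-read as a graph $y_{n+1}=\tilde u(y)$ over $\{y_n>0\}$ with $y_n = -u(x)$. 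Because the Gauss curvature of the graph is a geometric invariant, the equation becomes exactly
\[
\det D^2\tilde u \;=\; y_n^n\,\tilde u_n^{\,n+2},
\]
with the degenerate weight $y_n^n$ explicit and the remaining coefficient $\tilde u_n^{\,n+2}$ a bounded positive quantity depending only on first derivatives. This step is essential before the partial Legendre transform can produce the clean quasilinear form
\[
y_n^{\alpha}\,(-u^*_n)^{n+2}\det D^2_{y'}u^* + u^*_{nn}=0,\qquad \alpha=n,
\]
to which the intrinsic Schauder theory applies.

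Second, the tangential Schauder iteration only yields regularity of $D^m_{y'}D^l_{y_n}u^*$ for $l\in\{0,1,2\}$: Corollary \ref{c0} gains $y'$-derivatives, not normal ones. Your sketch implicitly claims that iterating Grushin Schauder estimates produces full $C^\infty$ regularity including arbitrarily many $y_n$-derivatives; that does not follow from the Schauder step alone. The paper closes this gap by differentiating the transformed equation in $y_n$ and exploiting that $\alpha=n$ is a nonnegative \emph{integer}: the term $y_n^n$ stays polynomial under such differentiation, so $D^m_{y'}D^l_{y_n}u^*$ for $l\geq3$ can be expressed algebraically in terms of lower-order derivatives, yielding $u^*\in C^\infty(\ov B^+_{\delta/2})$. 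Without this integer-exponent argument your induction in $k$ does not actually produce new $y_n$-derivatives.
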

To prove the above theorem, we perform first a Hodograph transform and reduce (\ref{EP1}) to a similar equation in the upper half-space. Then we perform the partial Legendre transform in the nondegenerate 
$x'$ coordinates. The structure of the equation satisfied by the transformed function allows us to use the $C^{2,\beta}$ estimates for the 
 Monge-Amp\`ere eigenfunctions obtained in Theorem \ref{T03} together with Schauder estimates for linear equations with H\"older coefficients modeled 
by a degenerate Grushin-type operator as in (\ref{w-eq}).
 These give the desired global $C^{\infty}$ regularity.

\subsection{Equivalent equation in the upper half-space}
We first write an equation in the upper half-space that is locally equivalent to (\ref{EP1}). After a dilation we may assume that $\lambda=1$. 

The Monge-Amp\`ere eigenfunctions are $C^{\infty}$ in the interior of $\Om$ so it remains to prove their $C^{\infty}$ smoothness near the boundary $\p\Om$.
Assume that $0\in\partial\Omega$ and 
$e_n$ is the inner normal of $\partial\Omega$ at $0$. We make the rotation of coordinates
$$y_{n}=- x_{n+ 1},~ y_{n+ 1}= x_n,~ y_k= x_k~(1\leq k\leq n-1).$$
In the new coordinates, the graph of $u$ near the origin can be represented as 
$y_{n+1} = \tilde u(y)$ in the upper half-space $\R^{n}_{+}=\{y\in \R^n: y_{n}>0\}$.
The Gauss curvature is
$$K= \frac{\det D^2 u}{(1+ |Du|^2)^{\frac{n+2}{2}}}= \frac{ |u|^n}{(1+ |Du|^2)^{\frac{n+2}{2}}}.$$
The tangent plane at $0$ is given by
$$x_{n+1}- u_{n}x_n - u_k x_k=0.$$
After the above rotation of coordinates, it is given by
$$-y_n -u_n y_{n+1}-u_k y_k=0,~\text{or}~~y_{n+1} + \frac{y_n}{u_n} + \frac{u_k}{u_n} y_k=0.$$
Hence
$$\tilde u_{y_n}= -\frac{1}{u_{x_n}}>0~(\text{near}~0), ~ \tilde u_{y_k}= -\frac{u_k}{u_n}.$$
Now, from (\ref{EP1}) we obtain the following equation in a neighborhood of the origin in the upper half-space $\{y_n>0\}$:
$$\det D^2 \tilde u = K (1+ |D\tilde u|^2)^{\frac{n+2}{2}}= |y_n|^{n} \left(\frac{1+ |D\tilde u|^2}{1+ |Du|^2}\right)^{\frac{n+2}{2}}=
 |y_n|^n (\frac{1}{u_n^2})^{\frac{n+2}{2}}=  y_n^n \, \, \tilde u_n^{n+2}.$$
Near the origin, the boundary $\p\Om$ is given by $x_n = \phi(x')$ in the original coordinates. Thus, the boundary condition for $\tilde u$ is $\tilde u=\phi$ on $\{y_n=0\}.$ Thus, locally, we have for some small $r_0>0$
(now relabeling $y$ by $x$)
\begin{equation}\label{ET}
\left\{\begin{array}{rl}
\det D^2 \tilde u  &=  x_n^n \, \, \tilde u_n^{n+2} \quad \mbox{in}~ B_{r_0}^{+},\\
\tilde u&= \phi \quad \quad \quad \quad \mbox{on}~\{x_n=0\}\cap B_{r_0}.
\end{array}\right.
\end{equation}

We remark that since $u \in C^{2,\beta}(\ov \Om)$ by Theorem \ref{T03}, we have $\tilde u \in C^{2,\beta}(\ov B_{r_0}^+) $ for some small $\beta>0$, 
and $\tilde u_n >c$. It remains to show that solutions of \eqref{ET} with $\phi \in C^\infty$ are smooth up to the boundary in a neighborhood of the origin.
 For simplicity of notation, we relabel $\tilde u$ from (\ref{ET}) by $u$.

\subsection{Partial Legendre transform} \label{Leg_sec}Next we perform the following partial
Legendre transformation to the solutions of \eqref{ET}:
\begin{equation}\label{leg}
y_i= u_i(x) \quad (i\le n-1),\quad  y_n = x_n,\qquad u^*(y)=x' \cdot \nabla_{x'}u -u(x).
\end{equation}
The function $u^*$ is obtained by taking the Legendre transform of
$u$ on each slice $x_n=\text{constant}.$ 
Note that $$(u^*)^* =u.$$
We claim that if $u$ satisfies (\ref{ET}) then $u^*$ (which is convex
in $y'$ and concave in $y_n$) satisfies
\begin{equation}\label{eqn-u*}
\left\{\begin{array}{rl}
y_n^{\alpha} (-u^*_n)^{n+ 2} \det D^2_{y'}u^* + u^*_{nn}  &= 0~\mbox{in}~ B_\delta^{+}\\
u^*&= \phi^*~~~~\mbox{on}~\{y_n=0\}\cap B_\delta,
\end{array}\right.
\end{equation}
where $\alpha=n$. Moreover $u^* \in C^{2,\beta}(\ov B_\delta^+)$, $-u_n^*>c$ and $\phi^* \in C^\infty$.

There are several ways to see this. A direct way is to compute the transformation matrices $Y = (Y_{ij}) = \left(\frac{\p y_i}{\p x_j}\right)$ and $Y^{-1}=X= (X_{ij})
= \left(\frac{\p x_i}{\p y_j}\right)$. We obtain
$$Y_{ij}= 
\left\{\begin{array}{rl}
u_{ij} & \text{if}~~ i\leq n-1\\
\delta_{jn}&\text{if}~~ i=n
\end{array}\right.,
~X= \left[
\begin{array}{c|c}
(D^2_{x'} u)^{-1} & U^{ni}/\det D^2_{x'} u\\ \hline
0 & 1
\end{array}\right]$$
where $(U^{ij})$ denotes the cofactor matrix of $D^2 u$.

Using these transformations, we find that $$u^*_{j}= x_j~ \text{if}~ j<n,~u^*_n= -u_n$$ and therefore
$$D^2_{y'} u^*= (D^2_{x'}u)^{-1},~
 u^*_{nn} = -\sum_{k\leq n}\frac{\p u_n}{\p x_k}\frac{\p x_k}{\p y_n}= -\sum_{k<n} u_{nk}\frac{U^{nk}}{\det D^2_{x'} u}-u_{nn}.
$$
Since $\det D^2_{y'} u^{\ast} = (\det D^2_{x'} u)^{-1} = 1/U^{nn}$,
it follows that
\begin{equation*}
 \frac{-u^*_{nn}}{\det D^2_{y'} u^{\ast}} = \left( \sum_{k<n} u_{nk}\frac{U^{nk}}{\det D^2_{x'} u}+ u_{nn}\right)\det D^2_{x'} u= \sum_{k\leq n}u_{nk}U^{nk} = \det D^2 u.
\end{equation*}
Hence $u^*$ satisfies \eqref{eqn-u*}. It remains to prove that $u^*$ is smooth up to the boundary in a neighborhood of the origin.

\subsection{Schauder estimates for linear equations}
In order to obtain the smoothness of $u^*$ from \eqref{eqn-u*} we establish Schauder estimates for its linearized equation. We consider linear equations of the form
\begin{equation}x_n^{\alpha} \sum_{i, j\leq n-1}a^{ij} v_{ij} + v_{nn}= x_n^{\alpha} f(x)
 \label{veq}
\end{equation}
with $a^{ij}$ uniformly elliptic $$\lambda I' \le (a^{ij}(x))_{i,j} \le \Lambda I'.$$
\begin{definition}\label{dal}
We define the distance $d_\alpha$ between 2 points $y$ and $z$ in the upper half-space by
$$d_\alpha(y,z):=|y'-z'| +\left|y_n^\frac{2+\alpha}{2}-  z_n^\frac{2+\alpha}{2}\right|.$$
\end{definition}
Notice that $d_\alpha$ is equivalent to the distance induced by the metric 
$$ds^2 = dx'^2 + x_n^\alpha dx_n^2.$$
The relation between $d_\alpha$ and the Euclidean distance in $\overline{B}_1^+$ is as follows:
\begin{equation}\label{51}
c|y-z|^\frac{2+\alpha}{2} \le d_\alpha(y,z) \le C |y-z| ,
\end{equation}
$$ d_\alpha(y,z) \sim |y-z|  \quad \mbox{if} \quad y,z \in \overline{B}_1^+\cap \{x_n \ge 1/4  \}.$$
The distance $d_\alpha$ occurs naturally in equation \eqref{veq} since they both scale homogeneously after the transformations $x \mapsto F_h x$ (see Section \ref{res_sec}). 

If function $w$ is $C^\gamma$ with respect to $d_\alpha$ (with $\gamma \in (0, \frac{2}{2+\alpha})$) we write
$$w \in C_\alpha^\gamma(\ov B_1^+),$$ 
and define
$$[w]_{C_\alpha^\gamma(\ov B_1^+)}= \sup_{y, z\in \ov B_1^+, ~y \ne z}\frac{|w(y)-w(z)|}{( d_\alpha(y,z))^\gamma}, 
\quad \|w\|_{C_\alpha^\gamma(\ov B_1^+)}=\|w\|_{L^\infty(\ov B_1^+)}+ [w]_{C_\alpha^\gamma(\ov B_1^+)}.$$
In view of \eqref{51} we obtain the following relations between the $C^\gamma_\alpha$ spaces and the standard $C^\gamma$ Holder spaces : 
\begin{align*}
 w \in C^\gamma_\alpha(\ov B_1^+)  \quad & \Rightarrow \quad w \in C^\gamma (\ov B_1^+) \\
 w \in C^\beta (\ov B_1^+)  \quad &  \Rightarrow \quad w \in C^\gamma_\alpha(\ov B_1^+) \quad \mbox{with} \quad \gamma=\beta \, \,  \frac{2}{2+\alpha}.
\end{align*}

\begin{proposition} [Schauder estimates]
 \label{LMA-model1}
 Assume that $v$ solves \eqref{veq} in $\ov B_\delta^+$ and
$$v= \varphi(x')~\text{on}~ \{x_n=0\}\cap \ov B_\delta^+.$$
If $a^{ij}$, $f \in C_\alpha^\gamma(\ov B_\delta^+)$ with $\frac{\gamma}{2} \le \frac{\min\{1,\alpha\}}{2+\alpha}$, and $\varphi \in C^{2,\gamma}$, then
$$D v, D^2v \in C^\gamma_\alpha(\ov B_{\delta/2}^+).$$
\end{proposition}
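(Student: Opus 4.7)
The plan is a Campanato-type perturbation argument modeled on standard Schauder theory, with the Grushin operator $L$ from Lemma \ref{lineq} playing the role of the constant-coefficient reference. The key structural observation is that both equation \eqref{veq} and the distance $d_\alpha$ are invariant under the anisotropic scaling $F_h$: if $v$ solves \eqref{veq} then $v_h(y) := h^{-1} v(F_h y)$ satisfies an equation of the same form with rescaled coefficients $a^{ij}_h(y) = a^{ij}(F_h y)$ and right-hand side $f_h(y) = f(F_h y)$, whose $C^\gamma_\alpha$-seminorms pick up a factor $h^{\gamma/2}$. This makes $C^\gamma_\alpha$ the natural Schauder scale for the problem.

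First I would reduce to homogeneous boundary data by subtracting from $v$ a polynomial extension of $\varphi$; since $\varphi \in C^{2,\gamma}$ in the Euclidean sense, this only modifies $f$ by a function that is again in $C^\gamma_\alpha$. The core of the argument is then a dyadic iteration carried out at each boundary point $z_0 \in \{x_n=0\} \cap \ov B^+_{\delta/2}$ at scales $h_k = \theta_0^k$. At each scale I would freeze the coefficients $a^{ij}$ at $z_0$ (a fixed linear change of variables in $x'$, using uniform ellipticity, reduces the frozen operator to $L$) and compare the rescaled $v_{h_k}$ with the solution $w_{h_k}$ of the frozen Grushin problem having matching boundary data. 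The error $v_{h_k} - w_{h_k}$ is controlled in $L^\infty$ by the $C^\gamma_\alpha$-oscillation of $a^{ij}$ and $f$ at scale $h_k$, which contributes the factor $h_k^{\gamma/2}$; applying Lemma \ref{lineq} to $w_{h_k}$ then extracts an approximating polynomial $(a_0 + a' \cdot y') y_n$ with residual $O(d_\alpha(y,0)^{2 + 2/(2+\alpha)})$. Since $2/(2+\alpha) \ge \gamma$ by hypothesis, iterating produces improvement of flatness at each step, and the telescoping sum yields a limiting polynomial encoding $Dv(z_0)$ together with a pointwise $C^{1,\gamma}_\alpha$-bound. A second layer of bootstrapping, rerunning the scheme on $v$ minus the now-controlled first-order part and using an iterated version of Lemma \ref{lineq} to produce second-order approximants in the anisotropic grading, promotes this to a full quadratic expansion and gives the $C^\gamma_\alpha$-control on $D^2 v$ at $z_0$.

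Away from $\{x_n = 0\}$, equation \eqref{veq} is uniformly elliptic on Euclidean balls of radius $\sim z_n$ around an interior point $z$, so classical $C^{2,\gamma}$ Schauder estimates apply there directly. The boundary and interior estimates are then matched using the comparabilities $d_\alpha(y,z) \sim |y-z|$ when both points are bounded away from $\{x_n = 0\}$ and $d_\alpha((z',0), z) \sim z_n^{(2+\alpha)/2}$ near the boundary: at an interior point $z$, the polynomial approximation produced by the boundary iteration at $(z',0)$ already controls $D^2 v$ on the corresponding $d_\alpha$-ball of radius $z_n^{(2+\alpha)/2}$, while the interior Schauder estimate handles finer scales.

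The main obstacle will be the anisotropic bookkeeping: the approximating polynomials at boundary points live in a graded algebra in which $x_n$ has $d_\alpha$-weight $2/(2+\alpha)$, so the natural degree-$2$ building blocks are $x_i x_j$ ($i,j<n$), $x_i x_n^{(2+\alpha)/2}$, and $x_n^{2+\alpha}$, not the Euclidean monomials. Translating such an anisotropic second-order expansion into Euclidean bounds on $D^2 v$, especially on the degenerate component $v_{nn}$, is where the constraint $\gamma/2 \le \min\{1,\alpha\}/(2+\alpha)$ enters: it guarantees $x_n^\alpha \in C^\gamma_\alpha$, so that the identity $v_{nn} = x_n^\alpha\bigl(f - \sum_{i,j<n} a^{ij} v_{ij}\bigr)$ delivers $v_{nn}$ in the asserted class and prevents the second-order part of the iteration from requiring regularity of $x_n^\alpha$ beyond what is available.
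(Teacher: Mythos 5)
Your proposal is correct and follows essentially the same route as the paper: a Campanato-type perturbation argument at boundary points using the anisotropic rescalings $F_h$ and the $d_\alpha$ metric, freezing coefficients and comparing with the constant-coefficient Grushin operator via Lemma~\ref{lineq}, then matching with classical interior Schauder estimates on Euclidean balls of radius $\sim x_n$. The only (minor) departure is your two-stage bootstrap toward a second-order expansion; the paper instead subtracts the boundary data and a multiple of $x_n^{2+\alpha}$ at the outset, so that a single iteration of the linear-in-$x_n$ approximants from Lemma~\ref{lineq} already yields the full pointwise $C^{2,\gamma}$ expansion of Lemma~\ref{po} in one pass.
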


The proof of this proposition is standard and we postpone it till the last section.\\
By repeatedly differentiating (\ref{veq}) in the $x'$ direction we easily obtain Schauder estimates for higher derivatives. Below $m=(m_1,..,m_{n-1})$ denotes a multi-index with $m_i$ nonnegative integers.
\begin{corollary}\label{c0}
If in the proposition above $\varphi \in C^{k+2,\gamma}$ for some integer $k \ge 0$ and
$$D^m_{x'}a^{ij}, D^m_{x'}f \in C^\gamma_\alpha (\ov B_\delta^+) \quad \forall \, \, m \quad \mbox{with} \quad |m| \le k,$$
then
$$D D^m_{x'} v, D^2 D^m_{x'}v \in C^\gamma_\alpha(\ov B_{\delta/2}^+) \quad   \quad \forall \, \,m \quad \mbox{with} \quad   |m| \le k.$$
\end{corollary}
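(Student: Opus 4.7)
The plan is to argue by induction on $|m|$, with the base case $|m| = 0$ being Proposition \ref{LMA-model1} itself. Suppose the conclusion holds for all multi-indices of length at most $k-1$, and fix $m$ with $|m| = k$. Apply $D^m_{x'}$ to the equation \eqref{veq}. Because $D^m_{x'}$ commutes with $x_n^\alpha$ and with $\partial_{nn}$, and by the Leibniz rule in the tangential directions, the function $w := D^m_{x'} v$ satisfies an equation of exactly the same form,
\[
x_n^\alpha \sum_{i,j\le n-1} a^{ij} w_{ij} + w_{nn} = x_n^\alpha \tilde f,
\]
with the same coefficients $a^{ij}$ but a new right-hand side
\[
\tilde f = D^m_{x'} f \;-\; \sum_{\substack{l \le m \\ l \neq m}} \binom{m}{l} \sum_{i,j \le n-1} D^{m-l}_{x'} a^{ij} \cdot D^l_{x'} v_{ij},
\]
and boundary value $w|_{\{x_n=0\}} = D^m_{x'} \varphi$, which lies in $C^{2,\gamma}$ because $\varphi \in C^{k+2,\gamma}$.

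Next I would verify that $\tilde f \in C^\gamma_\alpha$ on a slightly smaller half-ball so that Proposition \ref{LMA-model1} can be applied to $w$. The term $D^m_{x'} f$ is in $C^\gamma_\alpha$ by hypothesis. For each term in the sum, the factor $D^{m-l}_{x'} a^{ij}$ lies in $C^\gamma_\alpha$ by hypothesis (since $|m-l| \le k$), while $D^l_{x'} v_{ij} = (D^l_{x'} v)_{ij}$ lies in $C^\gamma_\alpha$ on a slightly smaller ball by the inductive hypothesis applied to the multi-index $l$ (which has $|l| \le k-1$). Since $C^\gamma_\alpha$ is closed under products of bounded functions (because $d_\alpha$ is a genuine metric), $\tilde f$ belongs to $C^\gamma_\alpha$ on that smaller ball. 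Applying Proposition \ref{LMA-model1} to $w$ on that ball yields $Dw, D^2 w \in C^\gamma_\alpha$ on a ball of half the radius, completing the induction step.

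The only bookkeeping issue is the radius of the final domain: each induction step nominally shrinks the half-ball by a factor of two, so after $k+1$ applications the naive radius is roughly $\delta/2^{k+1}$ rather than $\delta/2$. This is remedied by a standard trick of chaining Proposition \ref{LMA-model1} along a finite sequence of nested half-balls $B_{\delta_0}^+ \supset B_{\delta_1}^+ \supset \cdots \supset B_{\delta_{k+1}}^+$ with $\delta_0 = \delta$, $\delta_{k+1} = \delta/2$, using the quantitative dependence of the Schauder constants on the ratio $\delta_j/\delta_{j+1}$. The only non-routine step is verifying that $\tilde f$ remains in $C^\gamma_\alpha$ at each stage, and this follows immediately from the inductive hypothesis together with the algebra property of $C^\gamma_\alpha$; the remainder of the argument is entirely bookkeeping.
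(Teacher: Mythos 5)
Your proposal follows the same approach as the paper: the paper's entire proof is the one-line remark that the corollary follows "by repeatedly differentiating \eqref{veq} in the $x'$ direction," and your induction, the Leibniz computation moving the lower-order terms $D^{m-l}_{x'}a^{ij}\cdot D^{l}_{x'}v_{ij}$ into the source term, the algebra property of $C^\gamma_\alpha$, and the nested-ball bookkeeping are exactly the details being waved at. The argument is correct. One point worth naming explicitly if writing this out: to justify that $w=D^m_{x'}v$ satisfies the differentiated equation one should, strictly speaking, first work with tangential difference quotients and pass to the limit, rather than assume a priori that $v$ is differentiable $k+2$ times; this is the same device the paper invokes in the proof of Lemma \ref{lineq} and is routine, but it is the step that makes "differentiate the equation" rigorous.
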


\subsection{Proof of Theorem \ref{T04}} 
As mentioned at the end of Section \ref{Leg_sec}, it suffices to prove $C^{\infty}$ regularity in a neighborhood of the origin for the function $u^*$ satisfying \eqref{eqn-u*}:
\begin{equation*}
\left\{\begin{array}{rl}
y_n^{\alpha} (-u^*_n)^{n+ 2} \det D^2_{y'}u^* + u^*_{nn}  &= 0 \quad \quad \quad ~\mbox{in}~ B_\delta^{+},\\
u^*&= \phi^*~~~~\quad \quad \mbox{on}~\{y_n=0\}\cap B_\delta,
\end{array}\right.
\end{equation*}
with $\alpha=n$, $u^* \in C^{2,\beta}(\ov B_\delta^+)$, $-u_n^*>c$ and $\phi^* \in C^\infty$.

Fix $k<n$. Then $v= u^*_k$ solves the linearized equation
\begin{equation}
 \label{LMA_uk}
y_n^{\alpha} \sum_{i, j\leq n-1}a^{ij}  v_{ij} + v_{nn}= y_n^{\alpha} f(y)\quad\mbox{in}\quad B_\delta^+
\end{equation}
where
$$a^{ij} =(-u^*_n)^{n+ 2} U_{y'}^{\ast ij}, \quad \quad \quad f(y) = (n+2)(-u_n^{\ast})^{n+1} u^*_{nk}\det D^2_{y'}u^*.$$
and $U^*_{y'}$ denotes the cofactor matrix of $D^2_{y'}u^*$.

Since $u^* \in C^{2,\beta}(\ov B_\delta^+)$ we obtain $Du^*$, $D^2u^* \in C^\gamma_\alpha(\ov B_\delta^+)$ for some small $\gamma>0$, hence $a^{ij}$, $f \in C^\gamma_\alpha(\ov B_\delta^+)$.

By Proposition \ref{LMA-model1},
$D^2v \in C^\gamma_\alpha$ up to the boundary in $\ov B_{\delta/2}^+ $ which in turn implies $D_{y'}a^{ij}$, $D_{y'}f \in C^\gamma_\alpha (\ov B_{\delta/2}^+)$. Now 
we may apply Corollary \ref{c0} and iterate this argument to obtain that  $D^m_{y'}D^l_{y_n}u^*$ with $l\in \{0,1,2\}$ are continuous up to the boundary 
in $\ov B_{\delta/2}^+ $ for all multi-indices $m \ge 0$. In order to obtain the continuity of these derivatives for all values of $l$ we differentiate the 
equation for $u^*$ and use that $\alpha=n$ is a nonnegative integer. Then each derivative $D^m_{y'}D^l_{y_n}u^*$ with $l \ge 3$ can be expressed as a polynomial 
involving powers of $y_n$ and derivatives $D_{y'}^q D_{y_n}^su^*$ with $s< l$, thus $u^* \in C^\infty (\ov B_{\delta/2}^+)$ as desired. 
\qed

\section{Rescaling and the linearized equation}
\label{res_sec}
In this section, we prove some technical results (Lemmas \ref{good_rescale} and \ref{lineq} and Proposition \ref{LMA-model1}) which were used in the proofs of our main results in 
the previous sections.
\subsection{Rescaling} We give below the proof of Lemma \ref{good_rescale}.

Suppose that $u$ satisfies the hypotheses of Theorem \ref{T2.2}. Assume furthermore that the boundary $\p\Om$ is locally given by the graph of a $C^{1,1}$ convex function $q(x')$, i.e.
$$\p\Om\cap B_{\rho}=\{x_n = q(x')\}, \quad \quad q(0)=0, \quad \nabla_{x'}q(0)=0, \quad \|q\|_{C^{1,1}}\leq \frac{1}{\rho'}.$$

As mentioned in the Introduction, Lemma \ref{good_rescale} is a direct consequence of the pointwise $C^2$ estimate form \cite{S2}. 
We only need to check that, after an initial rescaling, the equation remains of the same form. 
First we need the following lemma which compares distance functions during the rescaling process.

\begin{lemma}
\label{ddh} 
Let $A$ be a sliding along $x_n=0$ with $|A| \le C.$
Let $\Omega_h = (AF_h)^{-1}\Omega.$
If $h\leq c$ then
\begin{myindentpar}{1cm}
(i) $\p \Omega_h \cap S_1(U_0)$ is a graph 
in the $e_n$ direction whose $C^{1,1}$ norm 
is bounded by $C h^{\frac{1+\alpha}{2+\alpha}}$.\\
(ii) the ratio of the distance functions to $\p \Om$ and $\p \Om_h$ is Lipschitz and
\begin{equation*}
 \left [ \frac{d_{\p\Omega}(A F_h x)}{h^{\frac{1}{2+\alpha}} d_{\p\Omega_h}(x)}\right]_{C^{0,1}}\leq  Ch^{\frac{1}{2+\alpha}} \quad \quad \mbox{in} \quad \Om_h \cap S_1(U_0).
\end{equation*}
\end{myindentpar}
\end{lemma}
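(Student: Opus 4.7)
The plan is to first derive the boundary equation of $\Om_h$ via the explicit change of variables $x=AF_h\tilde x$ and read off its $C^{1,1}$ bound from the implicit function theorem; then, for (ii), to factor each distance function as boundary height times a Lipschitz multiplier close to $1$ and telescope.

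For (i): since $\tau\cdot e_n=0$, the map $AF_h$ sends $(\tilde x',\tilde x_n)$ to $\bigl(h^{1/2}\tilde x'+h^{1/(2+\alpha)}\tau'\tilde x_n,\,h^{1/(2+\alpha)}\tilde x_n\bigr)$, so $\p\Om_h$ near the origin is defined by
$$h^{1/(2+\alpha)}\tilde x_n=q\bigl(h^{1/2}\tilde x'+h^{1/(2+\alpha)}\tau'\tilde x_n\bigr).$$
On $S_1(U_0)$ the argument of $q$ has size $O(h^{1/(2+\alpha)})$, hence $|\nabla q|\le Ch^{1/(2+\alpha)}$ there, and the coefficient of $\tilde x_n$ on the right is only $O(h^{2/(2+\alpha)})$. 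The implicit function theorem then produces a unique $C^{1,1}$ solution $\tilde x_n=\tilde q(\tilde x')$; differentiating the defining identity once and twice, using $q(0)=0$, $\nabla q(0)=0$, $\|q\|_{C^{1,1}}\le C$, yields $|\nabla\tilde q|\le Ch^{1/2}$ and $\|D^2\tilde q\|_{L^\infty}\le Ch^{(1+\alpha)/(2+\alpha)}$, which is (i).

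For (ii), I will use that both $\p\Om$ and $\p\Om_h$ are $C^{1,1}$ graphs with small normal tilt on the relevant scale to write, in a tubular neighborhood, factorizations
$$d_{\p\Om}(x)=\bigl(x_n-q(x')\bigr)g(x), \qquad d_{\p\Om_h}(\tilde x)=\bigl(\tilde x_n-\tilde q(\tilde x')\bigr)g_h(\tilde x),$$
where $g,g_h\in C^{0,1}$ agree with $1/\sqrt{1+|\nabla q|^2}$, respectively $1/\sqrt{1+|\nabla\tilde q|^2}$, at the foot of the perpendicular, up to a $C^{0,1}$ correction vanishing on the boundary. The bound $|\nabla q|\le Ch^{1/(2+\alpha)}$ together with (i) then forces $g$ and $g_h$ to equal $1+O(h^{2/(2+\alpha)})$ with Lipschitz norms at most $Ch^{1/(2+\alpha)}$ (for $g\circ AF_h$ the chain rule absorbs a factor $|F_h|\le h^{1/(2+\alpha)}$).

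To compare the heights themselves, I would insert the identity $h^{1/(2+\alpha)}\tilde q(\tilde x')=q\bigl(h^{1/2}\tilde x'+h^{1/(2+\alpha)}\tau'\tilde q(\tilde x')\bigr)$ and apply the fundamental theorem of calculus to $q$ between arguments evaluated at $\tilde q(\tilde x')$ and at $\tilde x_n$ in the $\tau'$-slot, to get
$$(AF_h\tilde x)_n-q\bigl((AF_h\tilde x)'\bigr)=h^{1/(2+\alpha)}\bigl(\tilde x_n-\tilde q(\tilde x')\bigr)\Bigl[1-\int_0^1\nabla q\cdot\tau'\,dt\Bigr].$$
The bracket equals $1+O(h^{1/(2+\alpha)})$ and has Lipschitz norm $\le Ch^{1/(2+\alpha)}$, since $D^2 q$ is bounded and the argument of $\nabla q$ is a map of Lipschitz constant $|F_h|\le h^{1/(2+\alpha)}$. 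Multiplying the three factors then expresses the ratio $d_{\p\Om}(AF_h\tilde x)/(h^{1/(2+\alpha)}d_{\p\Om_h}(\tilde x))$ as a product of $C^{0,1}$ quantities whose combined Lipschitz seminorm is $\le Ch^{1/(2+\alpha)}$, as claimed. The hard part will be the bookkeeping of the various anisotropic powers of $h$ and carefully setting up the factorization of the distance function to a merely $C^{1,1}$ boundary; with those in place, each individual estimate reduces to a Taylor expansion using $\|q\|_{C^{1,1}}\le C$ and part (i).
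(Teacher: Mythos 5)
Your argument is correct and reaches the stated conclusion, but for part (ii) it takes a genuinely different route from the paper. For (i), your explicit implicit-function computation is essentially the paper's observation (written out in coordinates): the paper simply notes that $A$ is bounded so $A^{-1}\Om$ and $\Om$ have comparable $C^{1,1}$ norms, and then views $F_h^{-1}$ as a dilation by $h^{-1/2}$ followed by a contraction of factor $h^{1/2-1/(2+\alpha)}$ in the $e_n$ direction; the resulting scaling of the Hessian of the graph function is exactly your $h^{(1+\alpha)/(2+\alpha)}$. For (ii), you factor each distance as height times a Lipschitz multiplier and then compare the heights explicitly via the transformed graph equation and the fundamental theorem of calculus, whereas the paper proves an auxiliary lemma (Lemma \ref{C11}) giving $\left[\frac{u}{v}\right]_{C^{0,1}}\le C\eps$ for any two positive $C^{1,1}$ functions vanishing on a $C^{1,1}$ boundary of norm $\eps$ with $[\nabla u]_{C^{0,1}},[\nabla v]_{C^{0,1}}\le\eps$ and $v_\nu\ge c$, proved by writing $u/d_{\p\Om}=\int_0^1\nabla u(x-td_{\p\Om}\xi_x)\cdot\xi_x\,dt$ and using $[\xi]_{C^{0,1}}\le\eps$. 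The paper then applies this lemma in one stroke to $u(x)=h^{-1/(2+\alpha)}d_{\p\Om}(AF_hx)$ and $v(x)=d_{\p\Om_h}(x)$, feeding in the chain-rule bound $|D^2u|=h^{-1/(2+\alpha)}|F_h^TA^TD^2_Xd_{\p\Om}\,AF_h|\le Ch^{1/(2+\alpha)}$ and (i) for $v$, so no height factorization or explicit comparison of graph equations is needed. Be aware that the ``hard part'' you flag --- setting up $d_{\p\Om}(x)=(x_n-q(x'))g(x)$ with a quantitative Lipschitz bound on $g$ for a merely $C^{1,1}$ boundary --- is precisely the content of the paper's Lemma \ref{C11} specialized to $u=x_n-q(x')$ and $v=d_{\p\Om}$; your informal description (``agrees with $1/\sqrt{1+|\nabla q|^2}$ at the foot of the perpendicular, up to a $C^{0,1}$ correction vanishing on the boundary'') is correct but needs the integral-along-normals representation (or an equivalent argument) to be made rigorous. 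Once that lemma is in place, your approach is sound; the paper's direct application is cleaner because it avoids reducing to heights and then re-multiplying three factors, while yours has the minor advantage of isolating the contribution of the sliding $\tau$ to the ratio as the explicit bracket $1-\int_0^1\nabla q\cdot\tau'\,dt$. One small inaccuracy that does not affect the argument: the bound $g=1+O(h^{2/(2+\alpha)})$ is too optimistic in the region $AF_h(\Om_h\cap S_1(U_0))$, where the $C^{1,1}$ correction to $g$ is of size $\sim d_{\p\Om}\sim h^{1/(2+\alpha)}$; but all that matters is that each factor be bounded above and below with Lipschitz seminorm $\le Ch^{1/(2+\alpha)}$, which does hold.
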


We remark that the ratio above tends to 1 as $x \to 0$.

\begin{proof} 
(i) The $C^{1,1}$ norms corresponding to $\Om$ and $A^{-1} \Om$ are comparable, since $A$ is bounded. Now the conclusion follows after we apply the linear transformation $F_h^{-1}$ to $A^{-1} \Om$. One way to see this is to think of $F_h^{-1}$ as a composition of a dilation of factor $h^{-\frac 12}$ followed by a 
$h^{\frac 12 -\frac {1}{2+\alpha}}$ contraction along the $x_n$ direction. 

(ii) The Lipschitz estimate follows from Lemma \ref{C11} below which estimates the Lipschitz norm of the ratio of two positive $C^{1,1}$ functions vanishing on the boundary. We apply  Lemma \ref{C11} to $\Om_h$, $v(x)=d_{\p\Om_h}(x),$
$u(x)= h^{-\frac{1}{2+\alpha}} d_{\p \Omega}(X)$ with $X=AF_hx$, and $\ep= Ch^{\frac{1}{2+\alpha}}$ to obtain the desired conclusion. 

Indeed, by (i), the curvatures of $\p\Om_h\cap S_1(U_0)$ are bounded by $Ch^{\frac{1+\alpha}{2+\alpha}}\leq \ep$, thus $|D^2v| \le C \eps$. Furthermore, since $|D^2_X d_{\p \Om}| \le C$, we obtain 
$$|D^2 u(x)|=h^{-\frac{1}{2+\alpha}} |F_h^T A^T D^2_X d_{\p \Om} A F_h| \le C h^\frac 1{2+\alpha}.$$
The proof of (ii) is complete.
\end{proof}

Below we present a general Lipschitz bound for the quotient of two positive $C^{1,1}$ functions vanishing continuously on the boundary os some $C^{1,1}$ domain.
\begin{lemma}\label{C11}
Assume that $\Om$ is a $C^{1,1}$ domain with norm bounded by $\eps$, and suppose $0\in\p\Om$. 
Let $u, v$ be positive  $C^{1,1}$ functions in $\Om\cap B_1$ that vanish continuously on $\p\Om\cap B_1$ and assume $v_{\nu}\geq c$ on $\p\Om\cap B_1$, and in $\Om\cap B_1$ we have
$$|\nabla u|\leq C, \quad |\nabla v|\leq C, \quad [\nabla u]_{C^{0, 1}}\le \eps, \quad [\nabla v]_{C^{0, 1}}\leq \ep.$$
Then
$$\left[\frac{u}{v}\right]_{C^{0, 1}}\leq C\ep~\text{in}~ \Om\cap B_{1/2}.$$
\end{lemma}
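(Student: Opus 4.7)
My plan is to flatten the boundary and then exploit the second-order Taylor expansion of $u$ and $v$ at the flat boundary, so that the leading linear-in-$x_n$ behaviors cancel in the quotient $u/v$. After a rotation I may assume that locally $\partial\Omega \cap B_1 = \{x_n = q(x')\}$ with $q(0)=0$, $\nabla_{x'}q(0)=0$ and $\|q\|_{C^{1,1}} \le \varepsilon$. The change of variables $y = (x',\,x_n - q(x'))$ is a $C\varepsilon$-perturbation of the identity with unit Jacobian that transforms $\Omega \cap B_1$ into a subset of $\{y_n > 0\}$ while preserving all the hypotheses up to a universal factor: the pulled-back $v$ still vanishes on $\{y_n=0\}$ with normal derivative $\ge c/2$ there (using that on $\partial\Omega$ one has $\nabla_{x'}v = -v_n\nabla q$, hence $v_\nu = v_n\sqrt{1+|\nabla q|^2}$), and both $u,v$ retain $|\nabla\cdot| \le C$ and $[\nabla\cdot]_{C^{0,1}} \le C\varepsilon$. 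Henceforth I assume $\Omega \cap B_1 = \{x_n > 0\} \cap B_1$.

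Next I set $a(x') := u_n(x',0)$ and $R_u := u - a(x')\,x_n$. Both $R_u$ and $\partial_n R_u$ vanish identically on $\{x_n = 0\}$, and $[\nabla R_u]_{C^{0,1}} \le C\varepsilon$, so Taylor's theorem along the $x_n$ direction yields
$$|R_u(x)| \le C\varepsilon\, x_n^2, \qquad |\nabla R_u(x)| \le C\varepsilon\, x_n \qquad \text{on } \Omega \cap B_{1/2},$$
together with $|\nabla_{x'} a| \le \varepsilon$ and $|a| \le C$. The analogous decomposition $v = b(x')\,x_n + R_v$ gives the same bounds on $R_v$, plus $b \ge c$. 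For $\varepsilon$ small this yields $v(x) \ge (c/2)\,x_n$ throughout $\Omega \cap B_{1/2}$, the key lower bound.

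Finally, I rewrite $u/v = A/B$ with
$$A(x) := a(x') + \frac{R_u(x)}{x_n}, \qquad B(x) := b(x') + \frac{R_v(x)}{x_n};$$
by the previous step $|A|,\,|B| \le C$ and $B \ge c/2$. The main calculation is that $|\nabla(R_u/x_n)| \le C\varepsilon$: for $i<n$ this is immediate from $|\partial_i R_u| \le C\varepsilon\,x_n$, while in the normal direction one decomposes
$$\partial_n\left(\frac{R_u}{x_n}\right) = \frac{\partial_n R_u}{x_n} - \frac{R_u}{x_n^2},$$
and both pieces are $O(\varepsilon)$ by Step 2. The same estimate holds for $R_v/x_n$, and combined with $|\nabla a|,|\nabla b| \le \varepsilon$, the quotient rule gives $|\nabla(u/v)| \le C\varepsilon$ on the convex half-ball $\{x_n>0\}\cap B_{1/2}$, whence $[u/v]_{C^{0,1}(\Omega\cap B_{1/2})} \le C\varepsilon$. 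The only delicate point is controlling the apparently singular term $R_u/x_n^2$: it remains bounded precisely because $R_u$ vanishes to order two at $\{x_n=0\}$, which is built into the definition $R_u = u - u_n(x',0)\,x_n$ and is the entire reason for flattening the boundary first.
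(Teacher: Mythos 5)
Your proof is essentially correct and takes the same structural route as the paper's: both arguments reduce the estimate for $u/v$ to the Lipschitz estimates for $u/d$ and $v/d$ (with $d$ the distance to the boundary, which becomes $x_n$ after your flattening), and then conclude by the quotient rule using $v/d \geq c/2$. The difference is in how one shows $u/d$ is Lipschitz: the paper avoids flattening entirely by writing $u/d_{\p\Om} = \int_0^1 \nabla u(x - t\,d_{\p\Om}(x)\xi_x)\cdot\xi_x\,dt$ and using that the normal field $\xi$ is $\eps$-Lipschitz, whereas you flatten the boundary and decompose $u = a(x')x_n + R_u$. Your route is more computational but also transparent; the paper's is slightly slicker since it dispenses with the change of variables and the bookkeeping of how the hypotheses transfer under $\Phi(x) = (x', x_n - q(x'))$.

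One step is stated incorrectly, although the conclusion you draw from it is true. You claim $[\nabla R_u]_{C^{0,1}} \le C\eps$, but this fails in general: since $a(x') = u_n(x',0)$ is only the Lipschitz trace of $u_n$, the term $x_n\,\p_i a(x')$ appearing in $\p_i R_u = u_i - x_n\,\p_i a$ need not be Lipschitz in $x'$ away from the boundary. What you actually need, and what does hold, is the weaker vertical statement: $\nabla R_u(x',0)=0$ together with $|\nabla R_u(x',x_n) - \nabla R_u(x',0)| \le C\eps\,x_n$, which follows directly from $[\nabla u]_{C^{0,1}} \le \eps$, from $u_i(x',0)=0$ for $i<n$, and from $|\p_i a| \le \eps$ a.e. This gives $|R_u| \le C\eps\, x_n^2$ and $|\nabla R_u| \le C\eps\, x_n$ as you want, and the rest of the argument (writing $u/v = A/B$ with $A = a + R_u/x_n$, bounding $|\nabla(R_u/x_n)|$ by splitting $\p_n(R_u/x_n)$ into the two $O(\eps)$ pieces, then the quotient rule on the convex half-ball) goes through verbatim. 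So replace the false intermediate claim by the pointwise Taylor estimates derived directly, and the proof is complete.
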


\begin{proof} For $x\in \Om\cap B_{1/2}$, let $\xi_x=\nabla d_{\p \Om}(x) $ be the unit vector at $x$ which gives the perpendicular direction to $\p\Om$. Then
\begin{equation*}
\frac{u(x)}{d_{\p\Om}(x)} = \frac{1}{d_{\p\Om}(x)}\int_1^0 \left[\frac{d}{dt} u( x - td_{\p\Om}(x) \xi_x)\right]dt= \int_0^1 \nabla u ( x - td_{\p\Om}(x) \xi_x) \cdot \xi_x \, dt.
\label{ud}
\end{equation*}
The bound on the curvatures of $\p\Om$  implies
$\displaystyle \left[\xi\right]_{C^{0,1}}\leq \ep.$
This combined with the hypotheses on $\nabla u$ easily gives
\begin{equation*}
\left[\frac{u}{d_{\p\Om}}\right]_{C^{0, 1}} \leq C\ep \quad \mbox{in} \quad \Om\cap B_{1/2}.
\label{ude}
\end{equation*}
The same holds for $v$, and since $v_\nu \ge c$ on $\p \Om$ we find
\begin{equation*}
\left[\frac{d_{\p\Om}}{v}\right]_{C^{0, 1}}\leq C\ep.
\label{dve}
\end{equation*}
and the conclusion of the lemma follows.
\end{proof}

\begin{proof}[Proof of Lemma \ref{good_rescale}] 
First, we consider the case when $D^2_{x'}\ph(0)=I'$, and $g(0)=1$. Then we can apply the pointwise $C^2$ estimate, Theorem 2.4 in \cite{S2}. The theorem states that there exists 
a sliding $$A \, x= x + \tau x_n, ~\text{with} ~|\tau| \le C(\rho,\rho')$$ such that for any $\eta>0$, we have
\begin{equation*}
(1-\eta) A \, \, S_h(U_0) \subset S_h(u) \subset (1+\eta) A \, \, S_h(U_0).
\end{equation*}
for all $h \leq \bar c(\eta)$.
Thus $u_h(x)=h^{-1}u(AF_hx)$ satisfies 
$$|u_h-U_0|\leq  C\eta~
\text{in}~ S_1(U_0)\cap\Om_h$$ for some $C$ universal.
By choosing $\eta=\ep_0/C$, we obtain for all $h\leq \bar{c}(\ep_0)$
$$|u_h-U_0|\leq  \ep_0~
\text{in}~ S_1(U_0)\cap\Om_h.$$

Next, we check that $q_h$ and the boundary data $\varphi_h$ satisfy on $S_1(U_0)\cap \p\Om_h$
\begin{equation}[D^2\varphi_h]_{C^{\beta}}\leq Ch^{\frac{\beta}{2}}[D^2\varphi]_{C^{\beta}},\quad \quad \|q_h\|_{C^{2, \beta}}\leq Ch^{\frac{1+\alpha}{2+\alpha}}\|q\|_{C^{2, \beta}},
 \label{bound_phi_h}
\end{equation}
and hence they can be made smaller than $\ep_0$ if we choose $h$ small.

Let $x\in \p\Om_h \cap S_1(U_0)$, and let $y= AF_h \, x\in\p\Om$ and $z=F_h x$. Thus
$$ h^{\frac{1}{2+\alpha}}x_n=z_n, \quad \quad h^{\frac 12} x'=z', \quad \quad y=Az=z+\tau z_n.$$
Let $\bar \varphi(z')=\varphi(y')$ and $\bar q(z')=q(y')$ denote the corresponding functions written in the $z$ variables. Since $A, A^{-1}$ are bounded linear transformations we find that the $C^{2,\beta}$ norms of $\bar \varphi$ and $\varphi$, respectively $\bar q$ and $q$, are comparable.

On the other hand we have
\begin{equation*}
\varphi_h(x')= \frac{1}{h}\bar \varphi(z'),\quad q_h(x')= h^{-\frac{1}{2+\alpha}} \bar q(z').
\end{equation*}
and \eqref{bound_phi_h} easily follows since $z'=h^{\frac 12} x'$.

Finally, we verify the behavior of $g$ after the rescaling $AF_h.$ 

Let $x \in \Omega_h\cap S_1(U_0)$,
and let $y:= AF_h x\in\Omega.$
Then
$$\det D^2 u_h(x) = h^{\frac{2}{2+\alpha}-1} \det D^2 u(y) = g(y) \, d_{\p\Omega}^{\alpha}(y) \, h^{-\frac{\alpha}{2+\alpha}}:= g_h(x) d^{\alpha}_{\p \Om_h}(x),$$
where 
$$g_h(x)= g(y) \left(\frac{d_{\p\Omega}(A F_h x)}{h^{\frac{1}{2+\alpha}} d_{\p\Omega_h}(x)}\right)^{\alpha}.$$
Now we may apply Lemma \ref{ddh} together with $|D_xy|=|AF_h| \le Ch^\frac{1}{2+\alpha}$ and obtain 
$$ g_h(0)=g(0)=1, \quad \quad [g_h]_{C^{\gamma}}\leq Ch^{\frac{\beta}{2}}.$$

In the general case, we perform an initial linear transformation given by a symmetric matrix $D$ which has $e_n$ as an eigenvector such that 
$$\tilde u(x)=u(D x), \quad x\in \tilde \Om:= D^{-1}\Om,$$
satisfies $D^2_{x'}\tilde \ph=I'$ and $\tilde g (0)=0$. Then, from the computations above, it is easy to see that the norms of $\tilde \ph$, $\tilde q$ and $\tilde g$ corresponding to $\tilde u$ are comparable to the ones of $\ph$, $q$ and $g$ respectively.
\end{proof}

\subsection{The linearized equation} We give below the proof of Lemma \ref{lineq} which is an application of the maximum principle and Taylor's formula. 
First we prove that
\begin{equation}\label{wxn}
|w| \le C x_n \quad \mbox{in} \quad B_{1/2}^+.
\end{equation}
Indeed, for example at $x_0=0\in \{x_n=0\} \cap B_{1/2}$, we consider the upper barrier
$$\bar w (x)= C x_n + 4 |x'|^2 -8n  x_n^{2+\alpha},$$
with $C$ large depending only on $n$ such that
$$ (C/2) x_n - 8n x_n^{2+\alpha}\geq 0~\text{on}~ B_1^{+}.$$
 Then $L(\bar w) \le 0$, and on $\p B_{1}^{+} \cap \{x_n>0\}$, we have
$\bar w \geq 1 \ge w.$
The maximum principle gives $w\leq \bar w$, and hence $w(0, x_n)\leq Cx_n$. We obtain the desired bound by translating the function $\bar w$ at other points on $\{x_n=0\} \cap B_{1/2}$.

Clearly $$\|w\|_{C^\frac{2}{2+\alpha}} \le C \|w\|_{L^\infty(B_1^+)} \quad \mbox{ in} \quad B=B_{1/4}(1/2e_n).$$ 
Now we apply this estimate for the rescalings
$w_h(x)=w(F_h x),$
use the bound \eqref{wxn}, and obtain
$$ \|w\|_{C^\frac{2}{2+\alpha}} \le C  \quad \mbox{ in} \quad F_h(B), \quad \forall h\in (0,1),$$
 which easily implies a bound on the norm of  $w$ in $C^\frac{2}{2+\alpha}( \ov B^+_{ 1/2})$.

The equation is invariant under translation in the $x_i$ direction for all $1\leq i\leq n-1$. Thus by iterating this argument (using first difference quotients) we find that the derivatives of $w$ with respect to  $x_i$ of any order  are 
bounded in $B_{1/2}^{+}$.

The equation $w_{nn}+x_n^\alpha \, \sum_{j=1}^{n-1}w_{jj}=0$  and the bound  $|w_{jj}| \leq C$  imply
the bound
$|w_{nn}| \le C\, x_n^\alpha.$
Thus, combining with the bound of $w_n$ on $\{x_n=0\}\cap B_{1/2}^{+}$, we see that $w_n$ is bounded in $B_{1/2}^{+}$. The same estimates as above show that
 $w_{in}$, $w_{iin}$ are bounded as well.

 By Taylor's formula, namely
$$f(t)=f(0)+f'(0) t+ \int_{0}^{t} (t-s) f''(s) ds$$
and the equation $Lw=0$,  we conclude that
$$w(0,x_n)=w(0)+w_n(0)\, x_n-\frac{\sum_{j=1}^{n-1}w_{jj}(0)}{(\alpha+2)(\alpha+1)}\, x_n^{2+\alpha}+O(x_n^{3+\alpha}),$$
$$w_i(0,x_n)=w_i(0)+w_{in}(0)\, x_n+O(x_n^{2+\alpha}), \quad \quad \quad i<n,$$
and
$$w_{ij}(0, x_n) = w_{ij}(0) + O(x_n), \quad  \quad \quad i,j<n.$$
Using these identities in the following Taylor's formula for $f(x^{'}) = w(x^{'}, x_n)$
\begin{eqnarray*}f(x^{'}) &=& f(0) + \sum_{i=1}^{n-1}f_i (0)x_i + \frac{1}{2}\sum_{i, j=1}^{n-1} f_{ij}(0)x_{i}x_j + O (|x^{'}|^3)\\
&=& w(0, x_n) + \sum_{i=1}^{n-1} w_{i}(0, x_n) x_i + \frac{1}{2}\sum_{i, j=1}^{n-1} w_{ij}(0, x_n)x_{i}x_j + O (|x^{'}|^3)
 \end{eqnarray*}
we obtain the desired conclusion since $w_{i}(0) = w_{ij}(0)=0$ for all $1\leq i, j\leq n-1$.
\qed

\subsection{Proof of Proposition \ref{LMA-model1}}After a dilation we may assume that $v$ solves 
\begin{equation}\label{veq2}
x_n^{\alpha} \, a^{ij} v_{ij} +  v_{nn}= x_n^{\alpha} \, f(x) \quad \mbox{in} \quad S_1(U_0),
\end{equation}
with $(a^{ij})_{i,j\le n-1}$ uniformly elliptic: $$\lambda I' \le (a^{ij}(x))_{i,j} \le \Lambda I',$$
and $a^{ij},f \in C^\gamma_\alpha(S_1(U_0))$ and $v(x',0)=\varphi(x') \in C^{2,\gamma}$.

As we remarked before, the distance $d_\alpha$ from Definition \ref{dal} occurs naturally in equation \eqref{veq2} since they both scale homogeneously after the transformations $x \mapsto F_h x$. Precisely,
$$\tilde v(x)=h^{-1}v(F_h x),$$
solves
$$x_n^{\alpha} \,\,  \tilde a^{ij} \tilde v_{ij} + \tilde v_{nn}= x_n^{\alpha} \tilde f(x),$$
with $$\tilde a^{ij}(x)=a^{ij}(F_h x), \quad \tilde f(x)=f(F_hx),$$
and
$$d_\alpha(y,z)=h^{- \frac 12} d_\alpha(F_hy,F_hz).$$

First we state a pointwise $C^{2,\gamma}$ estimate at the origin.
\begin{lemma}\label{po}
If $v$ satisfies (\ref{veq2}) as above, then there exists a tangent ``polynomial" $P_0$ at $0$,
$$P_0(x)=q(x')+(a_0+a'\cdot x') \, x_n + \frac{b_0}{(1+\alpha)(2+\alpha)} x_n^{2+\alpha},$$
with $q$ a quadratic polynomial in $x'$, and
$$a^{ij}(0) \, q_{ij}+b_0=f(0), \quad \quad \|q\|, |a'|, |a_0|, |b_0| \le C,$$
such that
$$|v-P| \le C U_0^{1+\frac{\gamma}{2}}\quad\text{in}\quad S_{1/2}(U_0).$$
The constant $C$ above depends on $n$, $\lambda$, $\Lambda$, $\alpha$, $\gamma$, $\|v\|_{L^\infty}$ and the corresponding norms of $a^{ij}$, $f$, $\varphi$.
\end{lemma}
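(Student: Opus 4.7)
The plan is to prove Lemma \ref{po} by a Campanato-type iteration, using Lemma \ref{lineq} as the constant-coefficient model at each dyadic scale. By a linear change of variables in the $x'$ directions (keeping $x_n$ fixed), I first normalize $a^{ij}(0)=\delta^{ij}$, so the frozen operator at the origin is exactly the Grushin-type model $L = x_n^\alpha\Delta_{x'} + \partial_{nn}$ of Lemma \ref{lineq}. Let $q(x')$ denote the degree-2 Taylor polynomial of $\varphi$ at the origin and set $b_0 := f(0) - \Delta_{x'}q$. The seed polynomial $\tilde P(x) := q(x') + \tfrac{b_0}{(1+\alpha)(2+\alpha)}x_n^{2+\alpha}$ then solves $L\tilde P = x_n^\alpha f(0)$ exactly, matches $\varphi$ on $\{x_n=0\}$ with error $O(|x'|^{2+\gamma})$, and satisfies the algebraic constraint required in the statement. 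The residual $\hat v := v - \tilde P$ solves
\[
x_n^\alpha a^{ij}\hat v_{ij} + \hat v_{nn} = x_n^\alpha\bigl[(f-f(0)) - (a^{ij}-\delta^{ij})q_{ij}\bigr],
\]
with right-hand side of order $x_n^\alpha d_\alpha^\gamma$ (from $a^{ij}, f \in C^\gamma_\alpha$) and boundary values of size $|x'|^{2+\gamma}$.

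\emph{Iteration.} I would prove that there exist universal constants $\theta_0 \in (0,1)$ and $C_1$ such that, inductively in $k$, one can find $(a_0^k, a_k') \in \R \times \R^{n-1}$ with $|a_0^{k+1}-a_0^k|, |a_{k+1}'-a_k'| \le C_1\theta_0^{k\gamma/2}$ satisfying
\[
|\hat v(x)-(a_0^k + a_k'\cdot x')x_n| \le \theta_0^{k(1+\gamma/2)} \quad \text{on } S_{\theta_0^k}(U_0).
\]
The mechanism is compactness: rescaling via $F_{\theta_0^k}$ and dividing by $\theta_0^{k(1+\gamma/2)}$ yields a uniformly bounded function on $S_1(U_0)$ whose equation converges to the homogeneous model $LW=0$ with coefficient and RHS oscillations of size $\theta_0^{k\gamma/2}$, and whose boundary data is $O(|y'|^{2+\gamma})$. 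Passing to a subsequential limit $W$ (and reducing to vanishing boundary data as explained below) and applying Lemma \ref{lineq} produces universally bounded $(A_0, A')$ with
\[
|W(y)-(A_0 + A'\cdot y')y_n| \le C_0 U_0(y)^{(3+\alpha)/(2+\alpha)}.
\]
Since $\gamma < 2/(2+\alpha)$, the exponent $(3+\alpha)/(2+\alpha)$ strictly exceeds $1+\gamma/2$, so taking $\theta_0$ small enough delivers the desired contraction, with updates $a_0^{k+1} = a_0^k + \theta_0^{k\gamma/2}A_0$ and an appropriately rescaled analogue for $a_{k+1}'$.

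Passing to the limit, $(a_0^k, a_k') \to (a_0, a')$ defines $P_0 := \tilde P + (a_0 + a'\cdot x')x_n$, which has the form required in the statement with universal coefficient bounds (inherited from the geometric convergence). Summing the geometric tails gives $|v - P_0| \le C U_0^{1+\gamma/2}$ on $S_{1/2}(U_0)$. The main technical obstacle is that the residual boundary data $\varphi-q = O(|x'|^{2+\gamma})$ is exactly of the borderline order $U_0^{1+\gamma/2}$, so after rescaling it does not decay and Lemma \ref{lineq} cannot be applied with zero boundary data directly. This is handled by splitting $W = W_0 + W_1$, where $W_1(x) := \psi(x')$ extends the limiting boundary data $\psi := W(\cdot, 0)$ as a function constant in $x_n$: then $LW_1 = x_n^\alpha \Delta_{x'}\psi$ has universally bounded coefficient in $L^\infty$, and $W_0 := W - W_1$ solves $LW_0 = -x_n^\alpha\Delta_{x'}\psi$ with vanishing boundary data, to which one applies Lemma \ref{lineq} after adding a particular polynomial correction of the form $\tfrac{c}{(1+\alpha)(2+\alpha)}y_n^{2+\alpha}$ that cancels the value of $\Delta_{x'}\psi$ at the origin. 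Both correction terms can then be absorbed either into the polynomial $\tilde P$ or into the admissible linear term $(a_0+a'\cdot x')x_n$, and the iteration closes.
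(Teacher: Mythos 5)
Your overall strategy — normalize $a^{ij}(0)=\delta^{ij}$, subtract a seed polynomial, then run a Campanato-type dyadic iteration whose contraction comes from compactness and the model Lemma \ref{lineq} — is the same one the paper uses. The one substantive divergence is in how the boundary data is handled, and this is where your argument has a genuine gap.

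The paper subtracts the \emph{entire} boundary data $\varphi(x')$ (not just its degree-2 Taylor polynomial $q$) before the iteration begins, together with a multiple of $x_n^{2+\alpha}$. After this subtraction the residual vanishes identically on $\{x_n=0\}$, and the term $-a^{ij}\varphi_{ij}$ is absorbed into the new right-hand side $f$ as a $C^\gamma_\alpha$ function; a dilation then makes $[a^{ij}]_{C^\gamma_\alpha}$, $[f]_{C^\gamma_\alpha}$ small with $f(0)=0$. In the compactness limit the right-hand side tends to $0$ uniformly and the boundary data is already identically $0$, so the limit solves the \emph{homogeneous} model problem $LW=0$, $W=0$ on $\{x_n=0\}$ — exactly the situation Lemma \ref{lineq} covers. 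Your choice of subtracting only $q$ leaves boundary data $\varphi-q=O(|x'|^{2+\gamma})$, and as you correctly observe this is precisely of the borderline order $U_0^{1+\gamma/2}$, so under the rescaling $F_{\theta_0^k}$ divided by $\theta_0^{k(1+\gamma/2)}$ it does \emph{not} tend to zero; the blow-up limit $W$ has a nontrivial boundary trace $\psi$.

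Your proposed fix — set $W_1(x)=\psi(x')$, $W_0=W-W_1$, and then apply Lemma \ref{lineq} to $W_0$ after adding a multiple of $x_n^{2+\alpha}$ — does not close the argument. The function $W_0$ solves $LW_0=-x_n^\alpha\Delta_{x'}\psi$, and $\Delta_{x'}\psi$ is a genuine $C^\gamma$ function of $x'$ (vanishing at the origin, since $D^2\psi(0)=0$, but nonconstant). Adding $\frac{c}{(1+\alpha)(2+\alpha)}x_n^{2+\alpha}$ only cancels the constant part of $\Delta_{x'}\psi$, which is already zero; the corrected function still satisfies $Lw=-x_n^\alpha\Delta_{x'}\psi\not\equiv 0$, so it is not in the scope of Lemma \ref{lineq}, which requires $Lw=0$ identically. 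What you would actually need here is an extension of Lemma \ref{lineq} to right-hand sides of the form $x_n^\alpha g$ with $g\in C^\gamma$, $g(0)=0$, and this extension is not in the paper nor in your proposal. The clean repair is simply to do what the paper does: subtract $\varphi(x')$ itself, so that $\varphi-q$ never appears as boundary data but only through $a^{ij}(\varphi-q)_{ij}$ in the right-hand side, where it vanishes in the compactness limit along with the other oscillating terms.
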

Proposition \ref{LMA-model1} follows from Lemma \ref{po} by rescaling. Indeed, the function
$$w(x):=h^{-1-\frac{\gamma}{2}}(v-P_0) (F_hx) $$
satisfies 
$$ x_n^{\alpha}\, \, \tilde a^{ij} w_{ij} +  w_{nn}= x_n^{\alpha} \tilde f(x) \quad \mbox{in} \quad S_1(U_0)   $$
with
$$\tilde a^{ij}(x)=a^{ij}(F_hx), \quad \tilde f(x):=h^{-\frac \gamma 2} \left(f(F_hx)-f(0)-(a^{ij}(F_hx)-a^{ij}(0))q_{ij}  \right).  $$
Since $h^\frac 12 |y-z| \sim d_\alpha(F_hy,F_hz)$ for $y,z \in B_c(\frac 12 e_n)$, we see that
$$\|\tilde a^{ij}\|_{C^\gamma}, \|\tilde f\|_{C^\gamma} \le C \quad \mbox{in} \quad B:=B_{1/4}( e_n/2).$$
By Lemma \ref{po} $\|w\|_{L^\infty}\le C$, thus by the classical Schauder estimates we find
$$\|D^2 w\|_{C^\gamma} \le C \quad \mbox{in} \quad B.$$
Rescaling back and using that $D^2P \in C^\gamma_\alpha$, $D^2v(0)=D^2P(0)$, we find
$$\|D^2v -D^2v(0)\|_{L^\infty} \le C h^\frac \gamma 2, \quad \|D^2 v\|_{C^\gamma_\alpha} \le C \quad \mbox{in} \quad F_h (B),$$
and now it is straightforward to check the conclusion of Proposition \ref{LMA-model1}.   

\

Lemma \ref{po} is standard and we only sketch its proof below.
\begin{proof}[Proof of Lemma \ref{po}]
After subtracting the boundary data and a multiple of $x_n^{2+\alpha}$, and then after a suitable linear transformation, a dilation and multiplication by a constant we may assume that in $S_1(U_0)$, $v=0$ on $\{x_n=0\}$, $\|v\|_{L^\infty} \le 1$, and
$$a^{ij}(0)=\delta_{ij}, \quad [a^{ij}]_{C^\gamma_\alpha}\le \delta, \quad f(0)=0, \quad [f]_{C^\gamma_\alpha} \le \delta,$$
for some sufficiently small $\delta>0$. As before, we use barriers for $v$ near $x_n=0$ and conclude by compactness that, as $\delta \to 0$, $v$ can be approximated uniformly in $S_{1/2}(U_0)$ by a solution to
$$x_n^\alpha \Delta_{x'}\bar v +\bar v_{nn}=0, \quad \quad \bar v=0 \quad \mbox{on} \quad \{x_n=0\},\quad\quad
\|\bar v\|_{L^\infty} \le 1, $$
i.e. $$v=\bar v +o(1) \quad \mbox{in} \quad S_{1/2}(U_0) \quad \mbox{where}\quad o(1) \to 0 \quad \mbox{ as} \quad \delta \to 0.$$
By Lemma \ref{lineq}, there exists $\theta_0$ small depending only on $n$, $\alpha$ such that
$$|\bar v-\bar P| \le C_0 \theta_0^{\frac{3+\alpha}{2+\alpha}} \leq\frac{1}{2}\, \, \theta_0^{1+\frac \gamma 2} \quad \mbox{in} \quad S_{\theta_0}(U_0)$$
for some quadratic polynomial
$$\bar P (x):= (\bar a_0  + \bar a^{'}\cdot x') x_n.$$
In conclusion $$|v-\bar P| \leq \, \theta_0^{1+\frac \gamma 2} \quad \mbox{in} \quad S_{\theta_0}(U_0),$$
provided that $\delta$ is chosen sufficiently small. 

Now the rescaling of $v-\bar P$ from $S_{\theta_0}(U_0)$ to $S_1(U_0)$ satisfies the same hypotheses as $v$ above. Then we can iterate the same argument and obtain the desired conclusion.
\end{proof}

\end{document}